\numberwithin{equation}{section}
\newcommand{\R}{\mathbb{R}}
\newcommand{\N}{\mathbb{N}}
\newcommand{\E}{\mathbb{E}}
\renewcommand{\P}{\mathbb{P}}
\newcommand{\dd}{\text{d}}
\newtheorem{thm}{Theorem}[section]
\newtheorem{lem}[thm]{Lemma}
\newtheorem{prop}[thm]{Proposition}
\newtheorem{cor}[thm]{Corollary}
\newtheorem{rem}[thm]{Remark}
\newtheorem{example}[thm]{Example}
\newtheorem{assumption}[thm]{Assumption}
\begin{document}
\title{
An explicit scheme for stochastic Allen-Cahn equations with space-time white noise near the sharp interface limit
}
                
\author{Yingsong Jiang, Chenxu Pang, Xiaojie Wang 
\thanks{All authors contributed equally. This work was supported by Natural Science Foundation of China (12471394, 12071488, 12371417) and the Postdoctoral Fellowship Program of CPSF (GZB2025715). \\
E-mail addresses: x.j.wang7@csu.edu.cn (Corresponding author), yingsong@csu.edu.cn, c.x.pang@csu.edu.cn.
}
\\
\footnotesize School of Mathematics and Statistics, Hunan Research Center of the Basic Discipline for Analytical  
\\
\footnotesize Mathematics, HNP-LAMA, Central South University, Changsha 410083, Hunan, China
}
\date{ }
\maketitle

\begin{abstract}
This article investigates time-discrete approximations of Allen–Cahn type stochastic partial differential equations (SPDEs) driven by space–time white noise  near the sharp interface limit $\epsilon \to 0$, where the small parameter $\epsilon$ is the diffuse interface thickness.
We propose an explicit and easily implementable exponential integrator with a modified nonlinearity  for the considered problem. Uniform-in-time and uniform-in-$\epsilon$ moment bounds of the scheme are established and the convergence in total variation distance of order 
$O\big( T \cdot \text{Poly}(\epsilon^{-1}) \tau^{\gamma} \big), \gamma < \tfrac12$,
is established, between the law of the numerical scheme and that of the SPDE over $[0, T]$.
In contrast to the exponential dependence due to standard arguments, the obtained error bound depends on $\epsilon^{-1}$ and $T$ polynomially.
By incorporating carefully chosen method parameters, we only require a mild and $\epsilon$-independent restriction on the time step-size $\tau$, getting rid of the severe restriction $\tau = O (\epsilon^\sigma), \sigma \geq 1$ in the literature.
Also, a uniform-in-time error bound of order $O(\tau^{\gamma}), \gamma < \tfrac12$, is obtained for a fixed $\epsilon$ (e.g., $\epsilon = 1$), which improves the existing ones in the literature and matches the classical weak convergence rate in the globally Lipschitz setting. 
The error analysis is highly nontrivial due to the low regularity of the considered problem, the super-linear growth of the non-globally Lipschitz drift, the non-smooth observables inherent in the total variation metric and the presence of the small interface parameter $\epsilon \to 0$. 
These difficulties are addressed by introducing a new strategy of nonlinearity modification and establishing refined regularity estimates for the associated Kolmogorov equation to an auxiliary process in the context of non-smooth test functions. 
Numerical experiments are included to demonstrate the theoretical convergence and the ability of interface-capturing for the proposed scheme.

\end{abstract}

\section{Introduction}
Throughout this paper, we are interested in the following
parabolic stochastic partial differential equations (SPDEs) 
in the Hilbert space $H := L^2(\mathcal{D}; \R)$ driven by space-time white noise:
\begin{equation}
\label{eq:SEE(in_Introduction)[TV4AC-25]}
\left\{\begin{array}{l}
\mathrm{d} X(t) 
= - A X(t) \, \mathrm{d} t 
+
\epsilon^{-1} 
F(X(t)) \, \mathrm{d} t 
+ 
\mathrm{d} W(t), 
\quad t > 0, \\
X(0) = X_0.
\end{array}\right.
\end{equation}
Here, $\mathcal{D} := (0, 1)$, the small parameter $\epsilon >0$ is the diffuse interface thickness,
$-A$ is the Laplacian operator with homogeneous Dirichlet boundary conditions, $F$ is a nonlinear Nemytskii operator associated with a real-valued function $f \colon \R \rightarrow \R$ such that
$
    F(u)(x) := f( u(x) ), 
    x \in \mathcal{D}
$,
and $\{W(t)\}_{t \geq 0}$ is a cylindrical $I$-Wiener process (see Assumptions \ref{assump:A(linear_operator)[TV4AC-25]}-\ref{assump:F(Nonlinearity)[TV4AC-25]} below for details).
The considered equation includes a stochastic Allen–Cahn equation as a special case, which describes random phase    separation or interface motion under thermal fluctuations.
As $\epsilon \to 0$, the solution becomes nearly piecewise constant,  taking values close to the stable equilibria $\pm 1$ in two bulk regions separated by a diffusive interfacial layer of thickness $O(\epsilon)$  \cite{zhang2009numerical}.
Such a limiting behavior is commonly referred to as the 
sharp-interface limit, and
the stochastic Allen--Cahn dynamics formally
converge to a 
stochastic mean curvature flow \cite{kawasaki1982kinetic}.
However, it is a challenge to numerically resolve the 
$O(\epsilon)$-thick transition layer and 
to accurately capture such thin interfaces. In particular, extremely fine resolutions and high computational cost are required to effectively capture thin interfacial layers 
(see, e.g., \cite{funaki1999singular,weber2010short,hairer2012triviality,hairer2015large,Cui_Sun2024weaksharp,feng2017finite,cheng2020new} and references therein).

%
%
%

For a fixed $\epsilon>0$ (e.g. $\epsilon=1$), 
the numerical analysis of the underlying model has been extensively examined in the literature (see e.g.\cite{brehier2025preconditioning,brehier2024SiamTV,WangANDQi201531,CUI2021weak,wang2020efficient,Cai2021weak4ACE,Arnulf_strong4non_global_lips,Brehier_Allen_C,feng2017finite,MajeeProhl_AC,liu-shen_2025geometric,wangyibo2024ACE,QiWang2019optimal}, to just mention a few).
On the contrary, the numerical analysis in the sharp-interface limit
 $\epsilon \rightarrow 0$
is much less studied.
%
%
In this regime, standard convergence arguments yield error bounds with exponential dependence on $\epsilon^{-1}$, which in turn would impose a severe restriction on the time-stepping step-size as $\epsilon$ tends to zero.
%
%
%
%
A natural and interesting question thus emerges: 
\vspace{0.3cm}

{\bf (Q).} {\it
Can one provide an error bound polynomially depending on $\epsilon^{-1}$ for a numerical approximation of SPDEs \eqref{eq:SEE(in_Introduction)[TV4AC-25]} near the sharp interface limit $\epsilon \to 0$?
}

\vspace{0.3cm}

Recently, the authors of \cite{Cui_Sun2024weaksharp} and \cite{cui2024improving} gave a positive answer to this question. More accurately, the authors of \cite{Cui_Sun2024weaksharp}  proposed a splitting method for \eqref{eq:SEE(in_Introduction)[TV4AC-25]}, where the phase flow of a parameterized nonlinear ODE with the nonlinearity $F$ should be exactly calculated. As the main focus of \cite{Cui_Sun2024weaksharp}, the authors established weak convergence rates of the splitting scheme in the context of smooth test functions $\varphi \in \mathcal{C}^{2}_{b}(H)$, with error bounds polynomially depending on $\epsilon^{-1}$ obtained under a severe restriction $\tau = O (\epsilon)$ on the step-size.
For SPDEs with trace-class noise, which is smoother than the space-time white one, the authors of \cite{cui2024improving} proposed a fully discrete tamed Euler scheme and derived error bounds polynomially depending on $\epsilon^{-1}$, in a truncated $L^1$-Wasserstein distance involving Lipschitz continuous test functions, under a even more severe restriction $\tau = O (\epsilon^{\sigma}), \sigma >1$. 

In the present work we restrict ourselves to SPDEs \eqref{eq:SEE(in_Introduction)[TV4AC-25]} driven by space-time white noise and aim to obtain error bounds of new explicit schemes under total variation distance, which only polynomially depend on $\epsilon^{-1}$ and do not impose the  severe restriction $\tau = O (\epsilon^\sigma), \sigma \geq 1$ on the step-size. 
More precisely, we introduce the following time-stepping scheme:
\begin{equation}
\label{eq:(intro)time_discretization[TV4AC-25]}
    X^{\tau}_{t_{m+1}}
    =
    E(\tau) X^{\tau}_{t_m} 
    + 
    \tau E(\tau)\epsilon^{-1}
    F_{\tau}\big( X^{\tau}_{t_m} \big)
    +
    \int_{t_m}^{t_{m+1}}
    E(t_{m+1}-s)\,\mathrm{d} W(s),
    \quad
    X_0^{\tau} = X_0,
\end{equation}
for $m \in \N_0$, where 
$F_{\tau}\colon L^{4q-2}(\mathcal{D}) \to H$, $q > 1$, is defined by
\begin{equation}
\label{eq:(intro)example_f_tau[TV4AC-25]}
    F_{\tau}(u)(x) := f_{\tau}(u(x)), \
    x \in \mathcal{D},
    \quad
    \text{ with }
    \quad
    f_{\tau}(v)
    :=
    \tfrac{ f(v) }
    {\big(1 + \beta \tau^{\theta} |v|^{\frac{2q-2}{\alpha}} \big)^{\alpha}},
    \quad v \in \R,
\end{equation}
where $\beta, \theta > 0$, $\alpha \in (0, \tfrac{1}{\theta})$ are method parameters and $2q-1$ is the degree of polynomial growth of $f$.
%
%
To the best of our knowledge, the time-stepping scheme is new, even for deterministic Allen-Cahn equations.
Such a modification \eqref{eq:(intro)example_f_tau[TV4AC-25]} results in a fully explicit and easily implementable scheme, which simultaneously preserves the dissipativity of
$f$ (see Proposition \ref{prop:F_tau[TV4AC-25]}) and thus helps us establish the uniform-in-time and uniform-in-$\epsilon$ moment bounds of the numerical solution, under the restriction 
$
    \tau
    =
    O
    \big(
     \beta^{\frac{\alpha}{1-\theta\alpha}}
     \epsilon^{ 
    \frac{1}{1-\theta\alpha}  
    }
    \big)
$
on the time step-size (Theorem \ref{thm:X^tau-bound[TV4AC-25]}). 
The convergence in total variation (TV) distance of order 
$O\big( t_m \cdot \text{Poly}(\beta) \cdot \text{Poly}(\epsilon^{-1}) \tau^{\gamma} \big), \gamma < \tfrac12$,
is established, between the law of $X^{\tau}_{t_{m}}$ and $X(t_m)$, which depends on $\epsilon^{-1}$ and $T$ polynomially (Theorem \ref{Thm_main_sharpinterface[TV4AC-25]}). Also, a uniform-in-time error bound of order $O(\tau^{\gamma}), \gamma < \tfrac12$, is obtained for a fixed $\epsilon = 1$ (Corollary \ref{cor_main_uniform[TV4AC-25]}). These findings improve the existing TV convergence rate of order $\tfrac{\gamma}{2}$ in  \cite{Cui_Sun2024weaksharp} and 
match the classical weak convergence rate in the globally Lipschitz setting \cite{brehier2024FCM,Brehier2025PA}.
Another interesting finding in numerical experiments is that, decreasing the degree $\alpha$ in the scheme seemingly improves the computational accuracy. In addition, numerical results indicate a good  performance of the proposed scheme in interface-capturing.

Different from the splitting scheme proposed by \cite{Cui_Sun2024weaksharp}, the new scheme \eqref{eq:(intro)time_discretization[TV4AC-25]}-\eqref{eq:(intro)example_f_tau[TV4AC-25]} is easy to implement and more direct, as one does not need to exactly solve the phase flow of a nonlinear ODE with the nonlinearity $F$.
Also, we highlight that, the tamed scheme here is computational cheaper than tamed schemes introduced in \cite{cui2024improving}, where the Sobolev norm $\| \cdot \|_{\vartheta}$ needs to be computed per every time-step.
Indeed, by incorporating a flexible degree $\alpha \in (0, \tfrac{1}{\theta})$ (instead of a fixed degree $\tfrac12$ or $1$ in the literature) and a method parameter $\beta$ in the taming factor, we introduce a different taming strategy, which, through carefully choosing method parameters, can significantly relieve the required restriction 
$
    \tau
    =
    O
    \big(
     \beta^{\frac{\alpha}{1-\theta\alpha}}
     \epsilon^{ 
    \frac{1}{1-\theta\alpha}  
    }
    \big).
$
%
%
More precisely,
by taking $\beta^\alpha = \epsilon^{-1}$, we get an $\epsilon$-independent restriction $\tau = O (1)$, while retaining an error bound with polynomial dependence on $\epsilon^{-1}$
(see Theorem \ref{Thm_main_sharpinterface[TV4AC-25]} for details). This essentially overcomes the severe restriction $\tau = O (\epsilon^\sigma), \sigma \geq 1$ in the literature \cite{Cui_Sun2024weaksharp,cui2024improving}.

Distinct from weak convergence analysis with smooth test functions and other metrics like $L^1$-Wasserstein distance, the TV distance involves non-smoothness in nature, which makes the corresponding analysis more challenging.
%
%
In the globally Lipschitz setting, the author of \cite{brehier2024SiamTV} derived the TV convergence of an accelerated exponential Euler scheme for SPDEs driven by space–time white noise via the Kolmogorov equation.
A crucial step for the analysis is to treat the TV error between
$\mathrm{law}( X^{\tau}_{t_m} )$ and 
$\mathrm{law}(X(t_m))$ 
as the weak error analysis 
in the setting of non-smooth test functions.
However,
the presence of 
$\epsilon^{-1}$
and the non-globally Lipschitz nonlinearities pose substantial difficulties for the construction of the scheme and its error analysis.
To overcome them,
we introduce the auxiliary process $\mathbb{X}^{\delta}$ defined by
\begin{equation}\label{eq:(intro)spde_auxiliary[TV4AC-25]}
        \mathbb{X}^{\delta}(t) = E(t) \mathbb{X}^{\delta}_0
               +
               \int_0^t E(t-s) 
               \epsilon^{-1}
               F_\delta( \mathbb{X}^{\delta}(s) ) \,\mathrm{d}s 
               +
               \int_0^t E(t-s) \,\mathrm{d}W(s),
               \ 
               t \geq 0,
    \end{equation}
and
decompose the weak error into the following two parts:
\begin{equation}
\label{eq:error-decomposition-introduction}
   | 
   \mathbb{E}
   [ \varphi(X(t_m))] 
   -
   \mathbb{E} 
   [ \varphi 
   ( X^{\tau}_{t_m} ) 
   ] 
   | 
    \leq
    \underbrace{
    |
       \mathbb{E}
    [ 
    \varphi(X(t_m)) 
    ]  
    - 
    \mathbb{E}
    [  
    \varphi
    (\mathbb{X}^{\delta}(t_m))
    ] 
    |
    }_{=: \text{Error}_{1}}
    +
    \underbrace{
    | 
    \mathbb{E}
    [ 
    \varphi
    (\mathbb{X}^{\delta}(t_m)) 
    ]
    -
    \mathbb{E}
    [ 
    \varphi 
    ( X^{\tau}_{t_m} ) 
    ] 
    | 
    }_{=: \text{Error}_{2}},
\end{equation}
where the non-smooth test functions $\varphi \in  \mathcal{C}_b^0(H)$ with $ \|\varphi \|_0 \leq 1$, according to the definition of the TV distance \eqref{tvDistance[TV4AC-25]}.
%
Here the modification   
$F_{\delta}$ is defined analogously to \eqref{eq:(intro)example_f_tau[TV4AC-25]} with
particular choices of parameters $\alpha, \beta, \theta$ 
and
$\tau$ therein replaced by $\delta$.
As the modification
$F_{\delta}$
is globally Lipschitz continuous (cf. Lemma \ref{lem:f'_delta[TV4AC-25]}), one can rely on the use of  the Kolmogorov equation to estimate both error terms in \eqref{eq:error-decomposition-introduction}.
A major technical obstacle stems from the regularization estimates of the solution $\nu^\delta$ to the associated Kolmogorov equations in the context of non-smooth test functions and the sharp interface limit.
A direct use of the Bismut-Elworthy-Li formula ensures a rough bound for $D \nu^{\delta} (t, x)$ being of order $O\Big(\frac{\exp(\epsilon^{-1}t)}{t^{1/2}} \Big)$ (see Lemma \ref{lem:markov_exponential_estimate_exp[TV4AC-25]}). Despite the exponential dependence on both
$t$ and $\epsilon^{-1}$,  the rough bound is enough for the first error term in \eqref{eq:error-decomposition-introduction}, by noting the error
$
\text{Error}_{1} = \big| \mathbb{E} \big[ \nu^\delta \big(0, X(t_m) \big) \big] - \mathbb{E} \big[ \nu^\delta \big(t_m, X_0 \big) \big] \big| = O \big( \exp(\epsilon^{-1}t_m) \sqrt{\delta} \big)
$
vanishes as $\delta \to 0$.
Using the rough bound of $D \nu^{\delta} (t, x)$ and  based on more careful estimates with the ergodicity of \eqref{eq:(intro)spde_auxiliary[TV4AC-25]},
one can obtain improved regularization estimates of $D \nu^{\delta}$ with polynomial dependence on $\epsilon^{-1}$ (Proposition \ref{prop:markov_exponential_estimate_pol[TV4AC-25]}).
The refined regularization estimates for $D \nu^{\delta} (t, x)$ thus leads us to good estimates of
$\text{Error}_{2}$ in \eqref{eq:error-decomposition-introduction} with order
$
O
(
\text{Poly}(\epsilon^{-1})
\tau^{ \gamma \wedge \theta }
) 
$
(see Theorem \ref{Thm_main_sharpinterface[TV4AC-25]}).

~

Our main contributions can be summarized as follows:
\begin{itemize}
\item 
A novel explicit and easily implementable time-stepping scheme 
is designed for SPDEs with non-globally Lipschitz nonlinearity and space-time white noise. 
The scheme is new, even for deterministic Allen-Cahn equations. Uniform-in-time and uniform-in-$\epsilon$ moment bounds of the scheme are established.
As clarified before, by carefully choosing proper method parameters, the analysis of the newly proposed scheme relies on an $\epsilon$-independent restriction on the time step-size $\tau$, instead of the severe restriction $\tau = O (\epsilon^\sigma), \sigma \geq 1$ in the literature.

\item 
Total variation error bounds are
established  near the sharp interface limit $\epsilon \to 0$ (Theorem \ref{Thm_main_sharpinterface[TV4AC-25]}) with  a polynomial dependence on $\epsilon^{-1}$ and the time length $t_m$:
\begin{equation}
\label{eq:(intro)error_bound_sharp_interface[TV4AC-25]}
    d_{\mathrm{TV}}
    (
     \mathrm{law}( X^{\tau}_{t_m} )
     ,
     \mathrm{law} (X(t_m) )
    )
    =
    O\big( t_m \cdot \text{Poly}(\epsilon^{-1}) \tau^{\gamma} \big),
     \quad
     \gamma \in (0, \tfrac12).
\end{equation}
This error bound admits a  convergence rate of order $\gamma$ for any $ \gamma < \tfrac12$, twice that of convergence in TV distance in \cite[Theorem 5.6]{Cui_Sun2024weaksharp}.


\item 
A uniform-in-time error bound is obtained for $\epsilon = 1$ 
(Corollary \ref{cor_main_uniform[TV4AC-25]}):
\begin{equation}
\label{eq:(intro)error_bound_uniform[TV4AC-25]}
    d_{\mathrm{TV}}
    (
     \mathrm{law}( X^{\tau}_{t_m} )
     ,
     \mathrm{law} (X(t_m) )
    )
     = 
     O
     \big(
     \tau^{ 
     \gamma
     } 
     \big)
     ,
     \quad
     \gamma \in (0,\tfrac12).
\end{equation}
Such a uniform-in-time error bound extends the existing ones in the globally Lipschitz regime \cite{brehier2024FCM,Brehier2025PA} to a non-globally Lipschitz setting.

\end{itemize}

The rest of this article is organized as follows. The next section presents some preliminaries. The explicit time-stepping scheme is introduced in Section \ref{sec:scheme[TV4AC-25]} and the uniform-in-time moment bound of the proposed scheme is proved in Section \ref{sec:moment-boundedness[TV4AC-25]}.
In Section \ref{sec:Kolmogorov_equation[TV4AC-25]}, an auxiliary process is introduced and the regularity estimates of its Kolmogorov equation are established.
Then we show main convergence results in TV distance in Section \ref{sec:convergence_analysis[TV4AC-25]}.
The numerical experiments are performed in Section \ref{sec:Numerical_experiments[TV4AC-25]} to verify the theoretic findings.

\section{Settings and the considered SPDEs}
\label{sec:SPDE_considered[TV4AC-25]}

\subsection{Notation}

Let $\N$ be the set of all positive integers and denote $\N_0 := \N \cup \{ 0\}$.
Define $a \wedge b := \min\{ a,b\}$, $a \vee b := \max\{ a,b\} $ for $ a,b \in \R$.
%
By $L^r(\mathcal{D}; \R)$ ($L^r(\mathcal{D})$ or simply $L^r$) we denote the Banach space of $r$-integrable functions, equipped with the norm $\|\cdot\|_{L^r}$.
In particular, let $H := L^2(\mathcal{D}; \R)$ be the real, separable Hilbert space endowed with the inner product $\langle \cdot, \cdot \rangle$ and norm $\|\cdot\| := \langle \cdot, \cdot \rangle^{1/2}$.
Moreover,
we use $\mathcal{L}(H)$ to denote the Banach space of bounded linear operators on $H$, equipped with the operator norm $\|\cdot\|_{\mathcal{L}(H)}$.
 By $\mathcal{L}_2(H) \subset \mathcal{L}(H)$ ($\mathcal{L}_2$ for short), we denote the subspace consisting of all Hilbert-Schmidt operators from $H$ to $H$, which is also a separable Hilbert space, endowed with the scalar product $\langle \Gamma_1, \Gamma_2 \rangle_{\mathcal{L}_2(H)} := \sum_{n \in \N} \langle \Gamma_1 \eta_n, \Gamma_2 \eta_n \rangle$ and the norm $\| \Gamma \|_{\mathcal{L}_2(H)} 
 :=
 \left( \sum_{n \in \N}
 \| \Gamma  \eta_n \|^2 \right)^\frac{1}{2}$, 
independent of the choice of the orthogonal basis $\{ \eta_n\}_{n \in \N}$ of $H$. 
Also, we denote the Banach space consisting of continuous functions by $V := C(\mathcal{D}, \R)$,
endowed with the usual norm $\| \cdot \|_V$.
By $\mathbbm{1}_{ S }$ we denote the indicator function of the set $S$.
We also clarify the notation for Fr\'{e}chet derivatives: for a mapping $\phi: H \rightarrow \mathbb{R}$, its first derivative $D\phi(x) \in \mathcal{L}(H, \mathbb{R})$ and second derivative $D^2\phi(x) \in \mathcal{L}(H, \mathcal{L}(H, \mathbb{R}))$ are defined by
$$
D\phi(x).h 
:=
\langle
D\phi(x)
,
h 
\rangle 
\
\text{and}
\
D^2\phi(x).
(h, k)
:=
\langle
D^2\phi(x).h
,
k
\rangle
, 
\quad 
\forall\, h, k \in H.
$$
Throughout the paper,
we denote
$C$ as a generic positive constant, which may change from line to line and whose dependence on parameters will be indicated in the notation $C(\cdot)$.
%
Unless otherwise stated, 
all such constants are independent of $\epsilon$ and the step-size $\tau$.
%

Next we recall the TV distance between two Borel probability distributions $\mu_1$ and $\mu_2$ on $H$, defined by
\begin{equation}
d_{\mathrm{TV}} (\mu_1, \mu_2 )
:= 
\sup_{\varphi \in \mathcal{B}_b(H), \|\varphi \|_0 \leq 1  }
\left| \int \varphi(x) \mu_1 (\mathrm{d}x) - \int \varphi(x) \mu_2 (\mathrm{d}x) \right| ,
\end{equation}
where we denote the set of all bounded and measurable mappings from $H$ to $\R$ by $\mathcal{B}_b(H)$,
and $ \| \varphi \|_0 := \sup _{x \in H} |\varphi(x)| $ .
By $\mathcal{C}^0_b(H)$, we denote the set of bounded and continuous mappings from $H$ to $\R$ and by $\mathcal{C}^1_b(H)$ the subspace of $\mathcal{C}^0_b(H)$ consisting of all functions with bounded first order derivatives.
%
Since every bounded, measurable function $\varphi \in \mathcal{B}_b(H)$ can be approximated pointwise by a sequence $(\varphi_k)_{k \in \N}$ in $\mathcal{C}_b^0(H)$, where $\sup_{k \in \N} \sup_{x \in H} |\varphi_k(x)| < \infty$ and $\varphi_k(x) \to \varphi(x)$ as $k \to \infty$ for all $x \in H$, the TV distance between two Borel probability distributions $\mu_1$ and $\mu_2$ can be equivalently rewritten as (see, e.g., \cite{brehier2024SiamTV,Brehier2025PA}):

\begin{equation}\label{eq:TV-weakformula[TV4AC-25]}
   d_{\mathrm{TV}} (\mu_1, \mu_2 )  
    = 
    \sup_{\varphi \in \mathcal{C}_b^0(H),  \| \varphi \|_0 \leq 1}
     \left| \int \varphi(x) \mu_1(\mathrm{d}x) - \int \varphi(x) \mu_2(\mathrm{d}x) \right| .
\end{equation}
Given an $H$-valued random variable $X$ and a Borel probability measure $\mu$ on $H$, the TV distance between the law of $X$, denoted by $\text{law}(X)$, and $\mu$ is thus given by
\begin{equation}\label{tvDistance[TV4AC-25]}
\begin{split}
   d_{\mathrm{TV}} ( \text{law}(X), \mu  )  
    &= 
    \sup_{ \varphi \in \mathcal{B}_b(H), \|\varphi \|_0 \leq 1 }
    \left| \E [\varphi(X)] - \int \varphi(x) \mu (\mathrm{d}x) \right| \\
    &=
    \sup_{ \varphi \in  \mathcal{C}_b^0(H), \|\varphi \|_0 \leq 1 }
   \left| \E [\varphi(X)] - \int \varphi(x) \mu (\mathrm{d}x) \right|  .
\end{split}
\end{equation}

\subsection{Main assumptions and the well-posedness of SPDEs}

Throughout this paper, we focus on the following  parabolic SPDEs in the Hilbert space $H$:
\begin{equation}\label{eq:considered_SEE[TV4AC-25]}
\left\{\begin{array}{l}
\,\mathrm{d} X(t)  = - A X(t) \,\mathrm{d} t + \epsilon^{-1} F(X(t)) \,\mathrm{d} t + \,\mathrm{d} W(t), 
\quad t > 0, \\
X(0) = X_0,
\end{array}\right.
\end{equation}
where $\epsilon \in (0,1]$ is the diffuse interface thickness, and $A$, $F$, $W$, and $X_0$ are as specified below.  
To ensure the well-posedness of \eqref{eq:considered_SEE[TV4AC-25]} and to facilitate our subsequent analysis, we impose the following assumptions.


\begin{assumption}[Linear Operator $A$]\label{assump:A(linear_operator)[TV4AC-25]}
Let $\mathcal{D} := (0, 1)$ and let $H := L^2(\mathcal{D}; \R)$.
Let $-A \colon Dom(A) \subset H \rightarrow H$ be the Laplacian on $\mathcal{D}$ with homogeneous Dirichlet boundary conditions, i.e., $-Au = \Delta u$, 
$u \in Dom(A) := H^2(\mathcal{D}) \cap H^1_0(\mathcal{D})  $.
\end{assumption}

Under this assumption,
the operator $A$ admits an eigensystem
$\left\{\lambda_j, e_j\right\}_{j \in \mathbb{N}}$ in $H$ satisfying
$ A e_j = \lambda_j e_j$,
with
$\left\{ \lambda_j \right\}_{j \in \mathbb{N}}$ being an increasing sequence such that 
$\lambda_j \sim  j^2$.
Further, it  holds
\begin{equation}\label{eq:A_HS-norm_gamma[TV4AC-25]}
 \big\| A^{ \frac{\gamma-1}{2} } 
\big\|_{ \mathcal{L}_2(H) }
< 
  \infty,
\quad
\text{for any } \gamma < \tfrac12.
\end{equation}
In addition, $-A$ generates an analytic and contractive semi-group, denoted by 
$ E(t):=e^{-At},t \geq 0$.
By means of the spectral decomposition, we define the fractional powers of $A$, i.e., $A^\vartheta$ for $\vartheta \in \mathbb{R}$ \cite[Appendix B.2]{Kruse2014}. 
Denote the interpolation spaces by $\dot{H}^\vartheta := Dom( A^{\frac{\vartheta}{2}} ), \vartheta \in \mathbb{R}$, which are separable Hilbert spaces equipped with the inner product 
$\langle \cdot, \cdot \rangle_\vartheta := \langle A^{\frac{\vartheta}{2}} \cdot, A^{\frac{\vartheta}{2}} \cdot \rangle$ 
and the norm
$\| \cdot \|_\vartheta := \| A^{\frac{\vartheta}{2}} \cdot \| = \langle \cdot, \cdot \rangle_\vartheta^{1/2}$. 
The following regularity properties are well-known (see e.g. \cite{Pazy1983}):
for any $t>0, \vartheta \geq 0, \varsigma \in[0,1]$,
\begin{equation}\label{eq:E(t)_semigroup_property[TV4AC-25]}
    \left\|E(t) \right\|_{\mathcal{L}(H)} 
    \leq 
    e^{-\lambda_1 t}
,
\quad
    \left\| A^{\vartheta} E(t) \right\|_{\mathcal{L}(H)} 
    \leq 
    C t^{-\vartheta}
, 
\quad
    \left\| A^{-\varsigma} (I - E(t)) \right\|_{\mathcal{L}(H)} 
    \leq 
    C t^{\varsigma}.
\end{equation}
As indicated by \cite[(2.4)]{brehier2022ESAIM} and \cite[(6.2)]{thomee2007galerkin}, for any $t >0$,
\begin{equation}\label{eq:E(t)_semigroup_property_V[TV4AC-25]}
    \|E(t)x\|_{V}
    \leq
    C
    (1 \wedge t)^{-\frac14}
    e^{-ct} 
    \| x \| ,
    \
    \forall
    x
    \in 
    H,
    \quad
    \text{and}
    \quad
    \| E(t) x\|_{V} \leq \|x \|_V,
    \ 
    \forall
    x 
    \in
    V.
\end{equation}


\begin{assumption}[Noise Process]\label{assump:W(noise)[TV4AC-25]}
Let $\{W(t)\}_{t \geq 0}$ be a cylindrical $I$-Wiener process with respect to a filtered probability space $(\Omega, \mathscr{F}, \left\{\mathscr{F}_t\right\}_{t \in [0, \infty)}, \P)$, 
represented
by a formal series
\begin{equation}
    W(t) := \sum_{n = 1}^{\infty} \widetilde{\beta}_n(t) e_n, \quad t \geq 0,
\end{equation}
where $\big\{ \widetilde{\beta}_n(t)\big\}_{n \in \N}, t \geq 0$, is the sequence of independent real-valued standard Brownian motions adapted to the filtration $\left\{\mathscr{F}_t\right\}_{t \geq 0}$, and $\left\{e_n\right\}_{n \in \N}$ is the complete orthonormal basis of $H$.
\end{assumption}


\begin{assumption}[Nonlinearity]\label{assump:F(Nonlinearity)[TV4AC-25]}
Let 
$q > 1$ be any integer
and let $F\colon L^{4 q-2}(\mathcal{D}) \rightarrow H$ be a nonlinear Nemytskii operator given by 
\begin{equation}
    F(u)(x) := f( u(x) ), 
    \quad
    x \in \mathcal{D},
\end{equation}
where $f (v) = -c_f v^{2q-1} + f_0(v), v \in \R$ with $c_f >0$ and $f_0 \colon \R \rightarrow \R$ being  twice differentiable. 
In particular, there exist constants $c_{f,0}, c_{f,1} > 0$ such that
\begin{align}
   |f_0(v)| 
   &\leq
   c_{f,0}( 1 + |v|^{2q-2}),  \quad \forall v \in \R,
   \\
   |f_0'(v)|+|f_0''(v)|
   & \leq  
   c_{f,1}
   (
   1
   +
   |v|^{2q-3} 
   ), \quad \forall v \in \R.
\end{align}

\end{assumption}

By Assumption \ref{assump:F(Nonlinearity)[TV4AC-25]}, one can find constants $L_f \in \R$ and $ c_0, c_1, c_2, c_3, c_4, c_5 >0$ such that, for any $u, v \in \R$,
\begin{align}
\label{eq:asuume_f'(u)[TV4AC-25]}
 f'(u) 
  & \leq
  L_f ,
\\
\label{eq:asuume_(u+v)f(u)[TV4AC-25]}
   (  u + v ) 
    f( u )
   & \leq
    - c_0
     |u|^{2q}
    + c_1
     |v|^{2q}
    + c_2,
\\
\label{eq:asuume_|f(u)|[TV4AC-25]}
    |f(u)| 
   &
    \leq
      c_3 |u|^{2q-1} 
      +
      c_4 |u|
      +
      c_5 .
\end{align}
%
%
A typical example fulfilling Assumption \ref{assump:F(Nonlinearity)[TV4AC-25]} is the cubic nonlinearity 
(i.e., the case $q=2$)
$f(u) := a_0 + a_1 u + a_2 u^2 + a_3 u^3$
 with $a_3 < 0, a_0,a_1,a_2 \in \R$.
Such SPDEs with cubic nonlinearity are commonly termed as stochastic Allen–Cahn equations (\cite{CUI2021weak,Brehier_Allen_C,HuangC_ShenJ2023Mathcomp,feng2017finite}).

\begin{assumption}[Initial value]\label{assump:X_0(Initial Value)[TV4AC-25]}
    Let the initial value $ X_0\colon \Omega \rightarrow H $ be an $ \mathscr{F}_0 / \mathcal{B}(H) $-measurable random variable.
    For any $ p \geq 1 $ and for some
    $\varrho > \tfrac{ 1 }{2} $, there exists a constant $ C(\varrho,p)>0 $ depending on $ \varrho, p $ such that
\begin{equation}
    \| X_0 \|_{ L^p( \Omega; \dot{H}^{ \varrho  }) } 
    \leq
     C(\varrho, p) 
    < 
    \infty.   
\end{equation}
    
\end{assumption}

The next lemma provides uniform-in-time regularity properties of
the stochastic convolution 
(see, e.g., the proof of \cite[Lemma 2.2]{Cui_Sun2024weaksharp}).

\begin{lem}\label{lem:O_bound[TV4AC-25]}
     Let Assumptions \ref{assump:A(linear_operator)[TV4AC-25]}, \ref{assump:W(noise)[TV4AC-25]} be fulfilled. For any $p \in[2, \infty)$ and $\gamma \in\left[0, \frac{1}{2}\right)$, there exist constants
     $C(p)$ and
     $C(p, \gamma)$
     such that the stochastic convolution 
     \begin{equation}
     \mathcal{O}_t:=\int_0^t E(t-s) \,\mathrm{d} W(s), \quad t \geq 0
     \end{equation}
     obeys
\begin{equation}
\begin{aligned}    
      \sup_{t \geq 0}
      \big\|
         \mathcal{O}_{t} 
        \big\|_{L^p( \Omega; V )} 
     \leq
      C(p)
      <
      \infty,
\qquad 
        \sup_{t \geq 0} \big\| 
        \mathcal{O}_{t} 
        \big\|_{L^p( \Omega; \dot{H}^{\gamma} )}
       \leq
        C( p ,\gamma)
        <
        \infty.
\end{aligned}
\end{equation}

\end{lem}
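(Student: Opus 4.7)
My plan rests on the Gaussianity of $\mathcal{O}_t$ together with classical arguments for stochastic convolutions driven by analytic semigroups. Since the integrand $E(t-\cdot)$ is deterministic, for each $t \geq 0$ the random variable $\mathcal{O}_t$ is a centered Gaussian element of $H$, and its projections in $\dot H^\gamma$ and $V$ are also Gaussian. By Gaussian hypercontractivity, all $L^p(\Omega)$-moments are controlled by the corresponding second moments, so I would focus on the $L^2(\Omega)$-level estimates.

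For the $\dot H^\gamma$-bound, I would apply It\^o's isometry and expand in the eigenbasis $\{e_j\}$ of $A$:
$$
\mathbb{E}\|\mathcal{O}_t\|_\gamma^2
= \int_0^t \|A^{\gamma/2} E(t-s)\|_{\mathcal{L}_2(H)}^2 \,\mathrm{d}s
= \sum_{j\in\mathbb{N}} \lambda_j^{\gamma}\cdot\frac{1-e^{-2\lambda_j t}}{2\lambda_j}
\leq \frac{1}{2}\sum_{j\in\mathbb{N}} \lambda_j^{\gamma-1},
$$
which is finite and uniform in $t \geq 0$ precisely when $\gamma<1/2$ by $\lambda_j \sim j^2$, in agreement with \eqref{eq:A_HS-norm_gamma[TV4AC-25]}. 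The extension to $L^p(\Omega)$ follows from Gaussianity.

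For the $V$-bound, the natural tool is the factorization method of Da Prato--Kwapie\'n--Zabczyk. I would fix $\alpha\in(0,1/2)$, introduce the auxiliary process
$$
Y(s) := \int_0^s (s-r)^{-\alpha}\,E(s-r)\,\mathrm{d}W(r),
$$
and use the factorization identity
$$
\mathcal{O}_t = \frac{\sin(\pi\alpha)}{\pi}\int_0^t (t-s)^{\alpha-1} E(t-s) Y(s)\,\mathrm{d}s.
$$
A spectral computation parallel to the first step, combined with Gaussian hypercontractivity, yields $\sup_{s\geq 0}\|Y(s)\|_{L^p(\Omega;\dot H^\gamma)} < \infty$ for an appropriate $\gamma \geq 0$. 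Exploiting the one-dimensional smoothing bound $\|E(r)x\|_V \leq C(1\wedge r)^{-1/4} e^{-cr}\|x\|$ from \eqref{eq:E(t)_semigroup_property_V[TV4AC-25]} (or its sharper version mapping $\dot H^\gamma \to V$), and using H\"older's inequality in $s$ to integrate against $(t-s)^{\alpha-1}$, I would obtain a uniform-in-$t$ bound on $\|\mathcal{O}_t\|_{L^p(\Omega;V)}$, the exponential factor $e^{-c(t-s)}$ being crucial to prevent any blow-up as $t\to\infty$.

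The main obstacle is the parameter calibration in the $V$-estimate: one needs simultaneously (i) $Y(s)$ to have finite $\dot H^\gamma$-moments (a trace-class condition linking $\alpha$ and $\gamma$), (ii) the outer factorization kernel $(t-s)^{\alpha-1}\|E(t-s)\|_{\mathcal{L}(\dot H^\gamma, V)}$ to be integrable near $s=t$, and (iii) the resulting bound to be uniform in $t$. Threading these three conditions together is only possible by exploiting the sharp one-dimensional smoothing $\|E(r)\|_{\mathcal{L}(H,V)}\lesssim r^{-1/4}$, which is stronger than what a generic Sobolev embedding $\dot H^{1/2+\epsilon}\hookrightarrow V$ would deliver, in tandem with the exponential contraction $\|E(t)\|_{\mathcal{L}(H)}\leq e^{-\lambda_1 t}$ that absorbs polynomial-in-$t$ factors. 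An alternative, more direct route would be to view $\mathcal{O}_t$ as a Gaussian random field on $[0,\infty)\times\mathcal{D}$ and apply Kolmogorov--Chentsov together with the stationary Ornstein--Uhlenbeck representation $\mathcal{O}_t = Z_{\mathrm{stat}}(t) - E(t)Z_{\mathrm{stat}}(0)$ to get the uniformity in $t$ for free.
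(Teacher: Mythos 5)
The paper does not prove this lemma itself; it cites the proof of Lemma~2.2 in the reference \cite{Cui_Sun2024weaksharp}, so your argument must be judged on its own merits. Your $\dot H^\gamma$ part is correct and standard: It\^o's isometry, the spectral sum $\sum_j\lambda_j^{\gamma-1}<\infty$ for $\gamma<\tfrac12$, and Gaussian hypercontractivity give the uniform-in-$t$ bound exactly as you describe.

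The $V$-bound, however, has a genuine gap in the route you designate as primary. In the factorization method the two constraints on $\alpha$ are incompatible in the Hilbert scale: for $Y(s)=\int_0^s(s-r)^{-\alpha}E(s-r)\,\mathrm{d}W(r)$ to satisfy $\sup_s\mathbb{E}\|Y(s)\|^2=\sum_j\int_0^s r^{-2\alpha}e^{-2\lambda_j r}\,\mathrm{d}r\sim\sum_j\lambda_j^{2\alpha-1}<\infty$ you need $\alpha<\tfrac14$ (since $\lambda_j\sim j^2$), whereas the outer kernel $(t-s)^{\alpha-1}\,(1\wedge(t-s))^{-1/4}$ coming from $\|E(r)\|_{\mathcal{L}(H,V)}\lesssim r^{-1/4}$ is integrable near $s=t$ only if $\alpha>\tfrac14$. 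Shifting the regularity (taking $Y(s)$ in $\dot H^{\pm\sigma}$ and paying $r^{-1/4\mp\sigma/2}$ in the smoothing) reproduces the same borderline inequality $\sigma\lessgtr 2\alpha-\tfrac12$ on both sides, so the calibration you flag as ``the main obstacle'' cannot be threaded by the sharp smoothing plus exponential decay alone; the exponential factor controls $t\to\infty$ but does nothing at the singularity $s=t$. The standard repair is to run the factorization in $L^{2\kappa}(\mathcal{D})$ for large $\kappa$: the pointwise variance $\mathbb{E}|Y(s)(x)|^2\lesssim\int_0^s r^{-2\alpha}(1\wedge r)^{-1/2}\,\mathrm{d}r$ is finite for $\alpha<\tfrac14$ uniformly in $x$ and $s$, Gaussianity upgrades this to $\sup_{s}\|Y(s)\|_{L^{2\kappa}(\Omega;L^{2\kappa}(\mathcal{D}))}<\infty$, and the smoothing $\|E(r)\|_{\mathcal{L}(L^{2\kappa},V)}\lesssim r^{-1/(4\kappa)}$ only requires $\alpha>\tfrac{1}{4\kappa}$, which is compatible. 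Your closing alternative (Kolmogorov--Chentsov on the Gaussian field together with the stationary Ornstein--Uhlenbeck representation) is also a viable way to close the argument, but as written it is an afterthought rather than the proof.
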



As a consequence,
we obtain the well-posedness of the mild solution to SPDEs \eqref{eq:considered_SEE[TV4AC-25]} as follows (see, e.g., \cite[Chapter 6]{cerrai2001second} and \cite[Lemma 2.2]{Cui_Sun2024weaksharp}).



\begin{thm}
    Suppose Assumptions \ref{assump:A(linear_operator)[TV4AC-25]}-\ref{assump:X_0(Initial Value)[TV4AC-25]} are satisfied. Then, the model \eqref{eq:considered_SEE[TV4AC-25]} admits a unique mild solution $\{ X(t) \}_{t \geq 0}$ with continuous sample paths defined by
\begin{equation}\label{eq:spde[TV4AC-25]}
        X(t) = E(t) X_0
               +
               \int_0^t E(t-s) \epsilon^{-1}
               F( X(s) ) \,\mathrm{d}s 
               +
               \mathcal{O}_t,
               \ 
               t \geq 0,
        \ 
        \mathbb{P} \text{-a.s.}
\end{equation}
Moreover, 
for any $p \geq 2$,
there exists a constant $C(p,q) >0$ such that
\begin{equation}\label{eq:bound_X(t)[TV4AC-25]}
    \sup_{t\geq 0} 
    \|X(t) \|_{L^p( \Omega; V )}
    \leq
    C(p,q)( 1 + \|X_0 \|_V ).
\end{equation}
\end{thm}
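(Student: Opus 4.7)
The plan is to follow the classical pathwise route for dissipative semilinear SPDEs in one spatial dimension driven by space--time white noise, in the spirit of \cite[Chapter~6]{cerrai2001second} and the proof of \cite[Lemma~2.2]{Cui_Sun2024weaksharp}. The first step is to subtract the stochastic convolution: setting $Y(t) := X(t) - \mathcal{O}_t$, the candidate $Y$ should satisfy, $\mathbb{P}$-a.s.,
\begin{equation*}
Y(t) = E(t) X_0 + \int_0^t E(t-s) \, \epsilon^{-1} F\big(Y(s) + \mathcal{O}_s\big) \, \mathrm{d}s, \qquad t \geq 0,
\end{equation*}
in which the only randomness enters through the $V$-valued continuous perturbation $\mathcal{O}$ controlled by Lemma \ref{lem:O_bound[TV4AC-25]}. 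From here on one can work path-by-path, which circumvents the difficulty caused by the super-linear growth of $F$.

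For local existence and uniqueness, I would set up a Banach fixed-point argument in $C([0,T^\ast]; V)$. By Assumption \ref{assump:F(Nonlinearity)[TV4AC-25]} the Nemytskii operator $F$ is locally Lipschitz from $V$ into $H$, and by \eqref{eq:E(t)_semigroup_property_V[TV4AC-25]} the convolution $\phi \mapsto \int_0^{\cdot} E(\cdot - s)\phi(s)\,\mathrm{d}s$ is bounded from $L^\infty((0,T^\ast); H)$ into $C([0,T^\ast]; V)$ with operator norm $\lesssim (T^\ast)^{3/4}$. Picking $T^\ast$ small depending on $\|X_0\|_V + \sup_{s \in [0,T^\ast]}\|\mathcal{O}_s\|_V$ yields a contraction on a closed ball, delivering a unique mild solution $Y \in C([0,T^\ast]; V)$; uniqueness and continuity of sample paths then transfer to $X = Y + \mathcal{O}$ via Lemma \ref{lem:O_bound[TV4AC-25]}.

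The main obstacle is upgrading this local result to a global-in-time solution together with the uniform-in-time moment bound \eqref{eq:bound_X(t)[TV4AC-25]}, with a constant independent of $\epsilon$. Here the dissipativity \eqref{eq:asuume_(u+v)f(u)[TV4AC-25]} is indispensable: testing the random PDE for $Y$ with $|Y|^{2p-2}Y$ (for $p \geq q$), using that $\langle AY, |Y|^{2p-2}Y\rangle \geq 0$, and exploiting a pointwise consequence of Assumption \ref{assump:F(Nonlinearity)[TV4AC-25]} of the form
\begin{equation*}
|Y|^{2p-2} Y \, f(Y+\mathcal{O}) \leq -\tilde{c}_0\, |Y|^{2(p+q-1)} + \tilde{c}_1\, |\mathcal{O}|^{2(p+q-1)} + \tilde{c}_2,
\end{equation*}
deduced by expanding $f(Y+\mathcal{O}) = -c_f(Y+\mathcal{O})^{2q-1} + f_0(Y+\mathcal{O})$ and absorbing cross terms via Young's inequality, leads to the differential inequality
\begin{equation*}
\tfrac{\mathrm{d}}{\mathrm{d}t}\|Y(t)\|_{L^{2p}}^{2p} + \epsilon^{-1}\kappa_p \|Y(t)\|_{L^{2p}}^{2p} \leq \epsilon^{-1} C(p,q)\big(1 + \|\mathcal{O}_t\|_V^{2(p+q-1)}\big).
\end{equation*}
Crucially, Gronwall's lemma then kills the $\epsilon^{-1}$ prefactors on both sides, and combined with Lemma \ref{lem:O_bound[TV4AC-25]} this yields uniform-in-time and $\epsilon$-independent bounds on $\|Y(t)\|_{L^p(\Omega; L^{2p}(\mathcal{D}))}$ for every $p \geq q$, which precludes blow-up and hence gives global existence. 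A final bootstrap in the mild formulation, using the smoothing $\|E(t-s)\|_{\mathcal{L}(H,V)} \lesssim (1 \wedge (t-s))^{-1/4} e^{-c(t-s)}$ from \eqref{eq:E(t)_semigroup_property_V[TV4AC-25]} together with $\|F(Y+\mathcal{O})\| \lesssim 1 + \|Y\|_{L^{4q-2}}^{2q-1} + \|\mathcal{O}\|_V^{2q-1}$, promotes the $L^{2p}$ bound to the uniform-in-time $V$-norm moment bound for $Y$; adding back $\mathcal{O}_t$ and invoking Lemma \ref{lem:O_bound[TV4AC-25]} once more delivers \eqref{eq:bound_X(t)[TV4AC-25]}.
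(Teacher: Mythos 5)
The paper itself gives no proof of this theorem; it is quoted from \cite[Chapter 6]{cerrai2001second} and \cite[Lemma 2.2]{Cui_Sun2024weaksharp}, whose argument is exactly your first three steps: subtract the stochastic convolution, run a pathwise fixed-point argument for local well-posedness, and use the dissipativity \eqref{eq:asuume_(u+v)f(u)[TV4AC-25]} to obtain global, uniform-in-time, $\epsilon$-independent $L^{2p}(\mathcal{D})$ bounds (note that the Gronwall cancellation of $\epsilon^{-1}$ you exploit there is precisely the mechanism the paper reuses in Lemma \ref{lem:X^delta-Bound[TV4AC-25]}). Up to that point your proposal is correct.

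The gap is in the final bootstrap to the $V$-norm. Passing through the mild formulation with $\|E(t-s)\|_{\mathcal{L}(H,V)}\lesssim (1\wedge(t-s))^{-1/4}e^{-c(t-s)}$ gives
\begin{equation*}
\|Y(t)\|_V \;\lesssim\; \|X_0\|_V \;+\; \epsilon^{-1}\int_0^t (1\wedge(t-s))^{-1/4}e^{-c(t-s)}\big(1+\|Y(s)\|_{L^{4q-2}}^{2q-1}+\|\mathcal{O}_s\|_V^{2q-1}\big)\,\mathrm{d}s,
\end{equation*}
and the time integral of the kernel is $O(1)$, not $O(\epsilon)$, so the prefactor $\epsilon^{-1}$ survives (even rewriting the drift as $-\epsilon^{-1}Y+\epsilon^{-1}(F+Y)$ and using $e^{-\epsilon^{-1}(t-s)}$ only improves this to $\epsilon^{-1/4}$; compare the explicit $\epsilon^{-1}$ and $\epsilon^{-1/4}$ factors in the paper's own $V$-estimates of Lemma \ref{lem:X^delta-Bound[TV4AC-25]} and Proposition \ref{prop:X^tau(t)-bound[TV4AC-25]}). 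Moreover the resulting bound is of degree $2q-1$ in the initial data. Both features contradict \eqref{eq:bound_X(t)[TV4AC-25]}, which asserts a constant independent of $\epsilon$ (the paper's standing convention) and a bound \emph{linear} in $\|X_0\|_V$; this sharp form is actually used downstream, e.g.\ in Proposition \ref{prop:error_of_X^delta-X[TV4AC-25]}, where all $\epsilon$-dependence is tracked explicitly. To obtain \eqref{eq:bound_X(t)[TV4AC-25]} you must run the dissipativity argument directly in the sup-norm rather than bootstrap: either via the subdifferential of $\|\cdot\|_V$ at a maximizing point (as in Cerrai's Chapter 6), using $\operatorname{sgn}(u)f(u+v)\le -c|u|^{2q-1}+C(1+|v|^{2q-1})$ to get $\tfrac{\mathrm{d}^-}{\mathrm{d}t}\|Y\|_V\le \epsilon^{-1}(-c'\|Y\|_V+C(1+\|\mathcal{O}_t\|_V^{2q-1}))$, after which Gronwall again cancels $\epsilon^{-1}$ and yields a bound linear in $\|X_0\|_V$; or by letting $p\to\infty$ in your $L^{2p}$ estimates with the $p$-dependence of the constants tracked so that the $(2p)$-th roots stay bounded. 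As written, your last step establishes a strictly weaker statement than the theorem.
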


Owing to the non-degeneracy of the $I$-Wiener process, the SPDE \eqref{eq:spde[TV4AC-25]} admits the exponential convergence to the equilibrium (see, e.g., 
\cite[Lemma~4.6]{Cui_Sun2024weaksharp}).

\begin{prop}\label{prop:V-uniform_ergodicity_X(t)[TV4AC-25]}
    Let Assumptions \ref{assump:A(linear_operator)[TV4AC-25]}-\ref{assump:F(Nonlinearity)[TV4AC-25]} hold.
    By $X(t,x), t\geq 0$ we denote the unique solution of \eqref{eq:spde[TV4AC-25]} that initiates at $x \in H$.
    Then there exists a constant $C > 0$ independent of $\epsilon^{-1}$ 
    such that for any $\varphi \in \mathcal{B}_b(H)$, $t \geq 0$, and $x_1, x_2 \in V \cap \dot{H}^{\gamma}$, it holds
    \begin{equation}
        \left| 
        \E
        \left[ \varphi( X(t,x_1) ) \right] 
        - 
        \E\left[ \varphi( X(t,x_2) ) \right] \right|
        \leq
        C
        \| \varphi \|_{0}
        e^{- r t},
    \end{equation} 
%
where the rate $r := -\tfrac{1}{2}  \log ( 1 - \vartheta ) $
with $ e^{-\widetilde{C}  \frac{1}{\epsilon^2} }
\leq
\vartheta 
\leq
e^{-\widetilde{c} \frac{1}{\epsilon^2} } $ 
for some 
$\widetilde{C} \geq \widetilde{c} \geq 0 $.
\end{prop}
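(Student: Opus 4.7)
The plan is to establish the contraction via a Doeblin-type minorization on a fixed time window, combined with a Lyapunov bound, and then iterate to obtain exponential decay in total variation. Since the stated rate is $r = -\tfrac12 \log(1-\vartheta)$, it is natural to work over intervals of length two: a first \emph{relaxation} interval on which the dissipative drift forces the paths into a bounded subset of $V$ with overwhelming probability, followed by a second \emph{coupling} interval on which the non-degenerate cylindrical noise supplies the minorization.

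\textbf{Step 1 (Lyapunov control).} First I would combine the uniform-in-time moment bound \eqref{eq:bound_X(t)[TV4AC-25]} with the dissipativity inequality \eqref{eq:asuume_(u+v)f(u)[TV4AC-25]} and Lemma \ref{lem:O_bound[TV4AC-25]} to show that for any $x \in V \cap \dot{H}^{\gamma}$, the random variable $X(1,x)$ belongs, with probability bounded below by a universal constant, to a ball $B_R \subset V$ of radius $R = O(1)$ that is independent of $\epsilon$. This reduces the minorization problem to initial data in the bounded set $B_R$.

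\textbf{Step 2 (Minorization with the explicit $\epsilon$-scaling).} Next I would establish the core estimate: for every $x \in B_R$ there exists a probability measure $\mu$ on $H$ and $\vartheta \geq e^{-\widetilde{C}/\epsilon^{2}}$ such that
$$ \mathrm{law}\big(X(2,x)\big)(\cdot) \;\geq\; \vartheta\,\mu(\cdot). $$
The transparent route is a Girsanov argument in which the nonlinear drift $\epsilon^{-1}F(X)$ is removed by a shift of the cylindrical Wiener process, reducing $X$ to the stochastic convolution $\mathcal{O}$ started from $E(\cdot)x$. The Radon--Nikodym density has the form
$$ \exp\!\Big( -\tfrac12 \int_0^{1}\!\! \epsilon^{-2}\|F(X(s))\|^{2}\,\mathrm{d}s + \text{stoch.\ integral} \Big), $$
and the pointwise estimate $\|F(u)\| \leq C(1+\|u\|_{V}^{2q-1})$ from \eqref{eq:asuume_|f(u)|[TV4AC-25]}, combined with the $V$-bound \eqref{eq:bound_X(t)[TV4AC-25]} and Lemma \ref{lem:O_bound[TV4AC-25]}, shows that this density has $L^{1}$-norm of order $\exp(C\epsilon^{-2})$. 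Taking $\mu$ to be the (non-degenerate) Gaussian law of the reference Ornstein--Uhlenbeck variable restricted to a sufficient set then yields the claimed lower bound, which also explains both the upper and lower exponential bounds on $\vartheta$.

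\textbf{Step 3 (Iteration).} With the minorization on intervals of length $2$ in hand, the classical Doeblin coupling gives $\|\mathrm{law}(X(2,x_1)) - \mathrm{law}(X(2,x_2))\|_{\mathrm{TV}} \leq 2(1-\vartheta)$, and the Markov property propagates this to $\|\mathrm{law}(X(2n,x_1)) - \mathrm{law}(X(2n,x_2))\|_{\mathrm{TV}} \leq 2(1-\vartheta)^{n}$. Setting $n = \lfloor t/2 \rfloor$ and rewriting via the dual formulation \eqref{eq:TV-weakformula[TV4AC-25]} produces the stated bound with rate $r = -\tfrac12 \log(1-\vartheta)$.

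The main obstacle is Step 2, specifically tracking the precise $\epsilon^{-2}$ scaling of the Girsanov density. The polynomial growth of $f$ with exponent $2q-1$ interacts with the singular prefactor $\epsilon^{-1}$, and one must carefully use the uniform-in-time $V$-moment bound rather than a pathwise Gronwall argument, which would blow the $\epsilon$-dependence up exponentially in time. The lower bound $\vartheta \geq e^{-\widetilde{C}/\epsilon^{2}}$ must be produced constructively, since it is this quantitative scaling --- not mere positivity --- that enters the error estimates downstream.
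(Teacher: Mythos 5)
The paper does not prove this proposition itself: it is imported from \cite[Lemma~4.6]{Cui_Sun2024weaksharp}, so there is no in-paper argument to compare against. Your Doeblin--Harris outline (Lyapunov return to a bounded set, a Girsanov-based minorization with constant $\vartheta\gtrsim e^{-\widetilde C\epsilon^{-2}}$ on windows of length two, then iteration giving $r=-\tfrac12\log(1-\vartheta)$) is exactly the standard route by which such uniform-ergodicity statements for non-degenerate stochastic Allen--Cahn equations are established, and it is consistent with the structure and the $\epsilon^{-2}$ scaling of the cited result.

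Two points in Step~2 need repair before this becomes a proof. First, ``the density has $L^1$-norm of order $\exp(C\epsilon^{-2})$'' is not the relevant quantity: a Girsanov density integrates to one, and what the minorization requires is a \emph{pointwise lower bound} $\rho\geq e^{-C\epsilon^{-2}}$ on an event of reference probability bounded below (e.g.\ the event that the Ornstein--Uhlenbeck path stays in a $V$-ball, on which $\int_0^2\epsilon^{-2}\|F(Z(s))\|^2\,\mathrm{d}s\leq C\epsilon^{-2}$ and the stochastic integral is controlled by its quadratic variation). Second, the Novikov condition for removing the full drift $\epsilon^{-1}F(X)$ fails as written: $\|F(u)\|^2\sim\|u\|_V^{4q-2}$ grows faster than quadratically, and exponential moments of $\|\mathcal{O}_t\|_V^{4q-2}$ are infinite for $q>1$, so Girsanov must be applied after localization (stopping times, or restriction to the good event), not globally. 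Finally, for the constant $C$ to be uniform over $x_1,x_2\in V\cap\dot{H}^{\gamma}$, Step~1 needs the ``coming down from infinity'' effect of the superlinearly dissipative drift, not merely the moment bound \eqref{eq:bound_X(t)[TV4AC-25]}, whose right-hand side still depends on $\|X_0\|_V$. These are standard repairs, but they are precisely where the substance of the proof lies.
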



\section{The proposed explicit time-stepping scheme}
\label{sec:scheme[TV4AC-25]}

In this section, we aim to design an explicit time-stepping scheme
to approximate the underlying problem \eqref{eq:considered_SEE[TV4AC-25]} temporally.
For a uniform step-size $\tau > 0 $,
we propose the following scheme:

\begin{equation}\label{eq:time_discretization[TV4AC-25]}
    X^{\tau}_{t_{m+1}}
    = 
    E(\tau) X^{\tau}_{t_m} 
    + 
    \tau E(\tau)
    \epsilon^{-1}
    F_{\tau}
    \big( X^{\tau}_{t_m} \big)
    +
    \int_{t_m}^{t_{m+1}}
    E(t_{m+1}-s) \,\mathrm{d} W(s),
    \quad
    X_0^{\tau} =X(0),
\end{equation}
where
we denote $t_m := m \tau$, 
$m \in \N_0$.
Here, $F_\tau \colon L^{4q-2}(\mathcal{D}) \to H$ is the Nemytskii operator associated with the function $f_\tau \colon \mathbb{R} \to \mathbb{R}$:
\begin{equation}\label{eq:f_tau[TV4AC-25]}
F_{\tau}(u)(x)
        : =
        f_{\tau}( u(x) ), \quad x \in \mathcal{D},
\quad
       f_{\tau}( v )
       :=
       \frac{ f( v ) }
       { \big(  1 
            + 
            \beta \tau^{\theta}
          |v|^{ \frac{2q-2}{\alpha} } \big)^\alpha
        },
        \quad
        v \in \R,
    \end{equation}
where 
$\beta, \theta > 0$, $\alpha \in (0, \tfrac{1}{\theta})$.
On different choices of method parameters, 
the proposed modifications $f_\tau$ of the nonlinearity reduce to several existing ones
for stochastic ordinary differential equations (SODEs).
For example, the modification $f_{\tau}$ with $\alpha = 1/2$ is similar to that in \cite{neufeld2025non} and
taking $\alpha = 1$ makes the  modification $f_{\tau}$ similar to the tamed Euler method considered in \cite{sabanis2016euler, angeli2025uniform}.
 

Throughout the paper,
we put an additional condition on
the time step-size $\tau$, the diffuse interface thickness $\epsilon$ and the regularization parameters $\beta, \theta, \alpha$ for $f_\tau$ \eqref{eq:f_tau[TV4AC-25]} as follows:
\begin{equation}
\label{eq:epsilon_condition_f_tau[TV4AC-25]}
    2
    c_3
    ^2 
    \tau^{ 1-\theta\alpha } 
    \leq
    c_0
    \beta^{\alpha}
    \epsilon,
\end{equation}
where the parameters
$
c_0,c_3
$ stem from 
\eqref{eq:asuume_(u+v)f(u)[TV4AC-25]}-\eqref{eq:asuume_|f(u)|[TV4AC-25]}.
We mention that a similar condition like \eqref{eq:epsilon_condition_f_tau[TV4AC-25]} was also required in \cite{Cui_Sun2024weaksharp,cui2024improving}. At first sight, such a restriction depending on the small parameter $\epsilon$ is rather strict. However, by carefully choosing proper method parameters, the severe restriction can be significantly relieved and only an $\epsilon$-independent restriction is needed (see Remark \ref{rem:tau_epsilon_condition[TV4AC-25]} and Theorem \ref{Thm_main_sharpinterface[TV4AC-25]} below).
%
%

\begin{rem}\label{rem:tau_epsilon_condition[TV4AC-25]}

The restriction \eqref{eq:epsilon_condition_f_tau[TV4AC-25]} 
illustrates how the step-size $\tau$ 
should be scaled with respect to $\epsilon$ 
and the regularization parameters $\alpha$,
$\beta$ 
and the convergence rate $\theta$.
In particular,
for $\beta$ and $\theta$ fixed,
the parameter $\alpha$ can be chosen flexibly to relax the constraint on $\tau$ in sharp-interface simulations. 
%
Notably, 
as implied by Theorem \ref{Thm_main_sharpinterface[TV4AC-25]},
by taking $\beta^{\alpha} = O(\epsilon^{-1})$,
we just need an $\epsilon$-independent restriction
$2
    c_3
    ^2 
    \tau^{ 1-\theta\alpha } 
    \leq
    c_0
    $
to promise error estimates still with a polynomial dependence on $\epsilon^{-1}$.
%
\end{rem}

Under the condition \eqref{eq:epsilon_condition_f_tau[TV4AC-25]}, we have several key properties of the modified function $f_\tau$.


\begin{prop}\label{prop:F_tau[TV4AC-25]}
      Let Assumption \ref{assump:F(Nonlinearity)[TV4AC-25]} and condition \eqref{eq:epsilon_condition_f_tau[TV4AC-25]}
      hold.
For $\tau >0$, 
the modified function $f_{\tau}$ 
\eqref{eq:f_tau[TV4AC-25]} satisfies  the following conditions:
    there exist constants $ \widetilde{c}_1, \widetilde{c}_2
    > 0
    $ independent of $\beta, \theta, \tau$ and $\epsilon^{-1}$, such that for any $u,v \in \R$,
    \begin{align}
    \label{eq:(u+v)f_tau(u)[TV4AC-25]}
    2 
    ( u + v ) f_{\tau}(u) 
    +
    \tau \epsilon^{-1}
    | f_{\tau}(u) |^2
  &  \leq  
-
    \tfrac
   { 
   c_0
    }
    { 2(1 + \beta \tau^\theta)^{2\alpha} }
    |u|^2 
    +
    \widetilde{c}_1
    (
    1
    +
    |v|^{2q}
    )
    +
    \widetilde{c}_2
    \beta^\alpha
    \tau^{\theta \alpha}
    ,  
    \\ 
    \label{eq:|F_tau(u)|[TV4AC-25]}
    | f_{\tau}(u) |
   & \leq
    |f(u)|, 
    \\
    \label{eq:|F_tau(u)-F(u)|[TV4AC-25]}
    | f_{\tau}(u) - f(u) |
  &  \leq
    \alpha \beta
    \tau^{\theta}
     |u|^{ \frac{2q-2}{\alpha} }
     |f(u)|,
    \end{align}
where 
$c_0$ is given by \eqref{eq:asuume_(u+v)f(u)[TV4AC-25]} 
and
$\beta , \theta > 0$, $\alpha \in (0, \tfrac{1}{\theta})$ 
come from 
\eqref{eq:f_tau[TV4AC-25]}. 
\end{prop}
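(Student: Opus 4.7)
The plan is to prove the three inequalities in order of increasing difficulty, handling \eqref{eq:|F_tau(u)|[TV4AC-25]} and \eqref{eq:|F_tau(u)-F(u)|[TV4AC-25]} as preliminaries that follow from the pointwise structure of the taming factor, and saving the dissipativity estimate \eqref{eq:(u+v)f_tau(u)[TV4AC-25]} as the core argument. Throughout I abbreviate $g(v) := 1 + \beta\tau^\theta |v|^{(2q-2)/\alpha}$, so $f_\tau(v) = f(v)/g(v)^\alpha$. Since $g(v) \geq 1$ for all $v \in \R$, estimate \eqref{eq:|F_tau(u)|[TV4AC-25]} is immediate. For \eqref{eq:|F_tau(u)-F(u)|[TV4AC-25]}, I factor $f_\tau(u) - f(u) = f(u)\bigl(g(u)^{-\alpha} - 1\bigr)$ and invoke the elementary inequality $1 - (1+x)^{-\alpha} \leq \alpha x$ for $x \geq 0$ and $\alpha > 0$, which follows from the mean value theorem applied to $h(x) := 1 - (1+x)^{-\alpha}$ whose derivative $\alpha(1+x)^{-\alpha-1}$ is bounded by $\alpha$ on $[0,\infty)$.

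For \eqref{eq:(u+v)f_tau(u)[TV4AC-25]} my plan is to start from the decomposition
\begin{equation*}
    2(u+v) f_\tau(u) + \tau\epsilon^{-1} f_\tau(u)^2
    = \frac{2(u+v) f(u)}{g(u)^\alpha} + \frac{\tau f(u)^2}{\epsilon\, g(u)^{2\alpha}}
\end{equation*}
and estimate each piece from the structural bounds on $f$. The dissipativity \eqref{eq:asuume_(u+v)f(u)[TV4AC-25]} together with $g(u)^\alpha \geq 1$ controls the first summand by $-\tfrac{2 c_0 |u|^{2q}}{g(u)^\alpha} + 2 c_1 |v|^{2q} + 2 c_2$. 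For the quadratic term I use the polynomial growth \eqref{eq:asuume_|f(u)|[TV4AC-25]} combined with Young's inequality (valid since $q > 1$) to absorb the subleading powers of $|u|$ into $|u|^{4q-2}$, yielding $f(u)^2 \leq C_\ast c_3^2 |u|^{4q-2} + C_2$ for some $C_\ast$ close to $2$ and an additive constant $C_2$ depending only on $q, c_3, c_4, c_5$. The pivotal move is then the factorisation
\begin{equation*}
    -\frac{2 c_0 |u|^{2q}}{g(u)^\alpha} + \frac{C_\ast c_3^2\, \tau\, |u|^{4q-2}}{\epsilon\, g(u)^{2\alpha}}
    = \frac{|u|^{2q}}{g(u)^\alpha}\biggl[ -2 c_0 + \frac{C_\ast c_3^2\, \tau\, |u|^{2q-2}}{\epsilon\, g(u)^\alpha}\biggr],
\end{equation*}
together with the elementary taming bound $g(u)^\alpha \geq \beta^\alpha \tau^{\theta\alpha} |u|^{2q-2}$ (obtained by discarding the $1$ in $g(u)$), which gives $\tfrac{|u|^{2q-2}}{g(u)^\alpha} \leq \tfrac{1}{\beta^\alpha \tau^{\theta\alpha}}$. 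Invoking the step-size condition \eqref{eq:epsilon_condition_f_tau[TV4AC-25]} forces the bracketed expression to be at most $-c_0/2$, so that the combined contribution is bounded above by $-\tfrac{c_0 |u|^{2q}}{2 g(u)^\alpha}$.

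What remains — and what I expect to be the main obstacle — is to convert this $|u|^{2q}$-dissipative term into the target form $|u|^2/(1+\beta\tau^\theta)^{2\alpha}$ of order $|u|^2$; the powers do not match and the denominator of the target involves $1+\beta\tau^\theta$ rather than the full $g(u)$. I will handle this through a case split on the size of $|u|$. For $|u| \geq 1$, the inequality $1 \leq |u|^{(2q-2)/\alpha}$ yields $g(u) \leq (1+\beta\tau^\theta) |u|^{(2q-2)/\alpha}$, whence $g(u)^\alpha \leq (1+\beta\tau^\theta)^\alpha |u|^{2q-2}$ and therefore $\tfrac{|u|^{2q}}{g(u)^\alpha} \geq \tfrac{|u|^2}{(1+\beta\tau^\theta)^\alpha} \geq \tfrac{|u|^2}{(1+\beta\tau^\theta)^{2\alpha}}$. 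For $|u| \leq 1$ the target $|u|^2/(1+\beta\tau^\theta)^{2\alpha}$ is bounded above by $1$, so the required inequality holds up to an absolute additive constant, which is absorbed into $\tilde c_1$. Finally, the residual constant-in-$u$ contribution $\tau C_2/\epsilon$ is rewritten as $(\tau^{1-\theta\alpha}/\epsilon)\cdot \tau^{\theta\alpha}\cdot C_2$ and bounded via \eqref{eq:epsilon_condition_f_tau[TV4AC-25]} by a multiple of $\beta^\alpha \tau^{\theta\alpha}$, yielding the $\tilde c_2 \beta^\alpha \tau^{\theta\alpha}$ term. Collecting all pieces delivers \eqref{eq:(u+v)f_tau(u)[TV4AC-25]} with constants $\tilde c_1, \tilde c_2 > 0$ depending only on $c_0, c_1, c_2, c_3, c_4, c_5$ and $q$, as required.
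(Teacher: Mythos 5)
Your proposal is correct, and for parts \eqref{eq:|F_tau(u)|[TV4AC-25]} and \eqref{eq:|F_tau(u)-F(u)|[TV4AC-25]} it coincides with the paper's argument (the paper phrases the mean value theorem step through the auxiliary function $\Theta(x)=(1+\beta\tau^\theta x)^{-\alpha}$, but it is the same computation). For the dissipativity estimate \eqref{eq:(u+v)f_tau(u)[TV4AC-25]} the overall skeleton also agrees — same decomposition into the two summands, same use of \eqref{eq:asuume_(u+v)f(u)[TV4AC-25]}--\eqref{eq:asuume_|f(u)|[TV4AC-25]} and Young's inequality, same invocation of the step-size condition \eqref{eq:epsilon_condition_f_tau[TV4AC-25]} to kill the $|u|^{4q-2}$ contribution, and the same case split at $|u|=1$ — but your handling of the final dissipative term is genuinely different and more elementary. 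The paper keeps both $-c_0|u|^{2q}$ and $-\tfrac12 c_0\beta^\alpha\tau^{\theta\alpha}|u|^{4q-2}$ in a single quotient with denominator $(1+\beta\tau^\theta|u|^{(2q-2)/\alpha})^{2\alpha}$, introduces the auxiliary function $\Upsilon$, computes $\Upsilon'$, and separately analyses $\alpha\ge 1$ and $\alpha\in(0,1)$ (citing a monotonicity argument from the literature) to bound $\sup_{x\ge 1}\Upsilon(x)$ by $-c_0/(2(1+\beta\tau^\theta)^{2\alpha})$; it also needs the inequality $(1+x)^\alpha\ge 2^{\tilde\alpha-1}(1+x^\alpha)$. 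You instead factor out one copy of $g(u)^\alpha$, use the taming bound $g(u)^\alpha\ge\beta^\alpha\tau^{\theta\alpha}|u|^{2q-2}$ to reduce the bracket to a negative constant, and then convert $|u|^{2q}/g(u)^\alpha$ into $|u|^2/(1+\beta\tau^\theta)^{\alpha}$ via the pointwise estimate $g(u)\le(1+\beta\tau^\theta)|u|^{(2q-2)/\alpha}$ for $|u|\ge 1$. This avoids the calculus of $\Upsilon$ and the case distinction in $\alpha$ entirely, and in fact yields the slightly sharper exponent $\alpha$ in place of $2\alpha$ in the denominator of the dissipative term (which of course implies the stated bound). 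The paper's route is what one needs if one insists on retaining the extra $-\tfrac12 c_0\beta^\alpha\tau^{\theta\alpha}|u|^{4q-4}$ reinforcement in the numerator; since the proposition as stated does not require it, your shortcut is legitimate.
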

\begin{proof}
By introducing 
\begin{equation}
\Theta(x) := ( 1 + \beta \tau^{\theta}  x )^{-\alpha}, 
    \quad
    x \geq 0,
\end{equation}
we rewrite $f_{\tau}$ as
\begin{equation}
f_{\tau}(u) = f(u)\Theta(|u|^{ \frac{2q-2}{\alpha} }), \quad u \in \R.
\end{equation}
%
As $|\Theta(x)| \leq 1$ for all $x \geq 0$, the assertion \eqref{eq:|F_tau(u)|[TV4AC-25]} follows immediately.
Noting that $\Theta(0) = 1$ and
\begin{align}
| \Theta'(x) |
    = 
    \left|
    \tfrac{\alpha \beta \tau^{\theta}  }{ ( 1 + \beta \tau^{\theta}  x )^{\alpha+1}  }
    \right|
\leq
  \alpha \beta \tau^{\theta} 
,
\quad
x \geq 0,
\end{align}
we can infer
\begin{align}
        | f_{\tau}(u) - f(u) |
    =
      |f(u)|
      \cdot
      \big|\Theta( |u|^{\frac{2q-2}{\alpha}} ) - \Theta(0)\big|
    \leq
       \alpha \beta \tau^{\theta} |u|^{\frac{2q-2}{\alpha}} |f(u)|
\end{align}
and thus the assertion \eqref{eq:|F_tau(u)-F(u)|[TV4AC-25]} is validated.
%
%
Now
it remains to verify  
\eqref{eq:(u+v)f_tau(u)[TV4AC-25]}, which requires more efforts.
%
%
%
%
Using
\eqref{eq:asuume_(u+v)f(u)[TV4AC-25]}-\eqref{eq:asuume_|f(u)|[TV4AC-25]}
gives
\begin{align}
  & 2( u + v ) f_{\tau}( u ) + \tau \epsilon^{-1} |f_{\tau}( u )|^2
\notag\\
   & =
   \frac{  2( u + v ) f( u ) }
   { (1 + \beta \tau^{\theta}  
       |u|^{ \frac{2q-2}{\alpha} } )^\alpha
      } 
   +
   \frac{ \tau \epsilon^{-1} |f( u )|^2
        }{ (1 + \beta \tau^{\theta}  
       |u|^{ \frac{2q-2}{\alpha} } )^{2\alpha}
      } 
\notag\\
   & \leq
   \frac{ 2 ( -c_0 |u|^{2q} + c_1 |v|^{2q} + c_2 ) }
   { (1 + \beta \tau^{\theta}   
       |u|^{ \frac{2q-2}{\alpha} } )^\alpha
      } 
   +
   \frac{ \tau \epsilon^{-1}
         ( c_3|u|^{2q-1} + c_4|u| + c_5 )^2
        }{ (1 + \beta \tau^{\theta}   
       |u|^{ \frac{2q-2}{\alpha} } )^{2\alpha}
      } 
\notag\\
   & \leq
   2 c_1 |v|^{2q}
 +
   2 c_2
 +
   \frac{ -2c_0 |u|^{2q} 
         (1 + \beta \tau^{\theta}  
       |u|^{ \frac{2q-2}{\alpha} } )^\alpha 
       +
         \tau \epsilon^{-1}
         ( c_3|u|^{2q-1} + c_4|u| + c_5 )^2
    }
         { (1 + \beta \tau^{\theta}   
       |u|^{ \frac{2q-2}{\alpha} } )^{2\alpha}
      }.
\end{align}
Note that 
$ (1+x)^\alpha \geq 2^{\alpha-1}(1+x^\alpha),
    x \geq 0$ for any $\alpha \in (0,1]$
(see, e.g., \cite[(3.16)]{wang2025uniform}),
while
$ (1 + x)^\alpha \geq 1 + x^\alpha, x \geq 0$ for $\alpha >1$.
Accordingly, 
for $\Tilde{\alpha} := \min\{ \alpha, 1 \}$ it holds
\begin{equation}
    (1 + x)^\alpha
    \geq
    2^{\Tilde{\alpha} - 1}
    (1 + x^\alpha),
    \quad
    \text{for any}
    \ 
    x \geq 0,
    \alpha >0.
\end{equation}
This together with
the Young inequality, namely,
$ ( c_3|u|^{2q-1} + c_4|u| + c_5 )^2 \leq 2^{\Tilde{\alpha}} c_3^2 |u|^{4q-2} + \hat{c}_5  $ for some constant $\hat{c}_5 > 0$ depending on $(2^{\Tilde{\alpha}}-1)^{-1}$, further implies

\begin{align}
  & 2( u + v ) f_{\tau}( u ) + \tau \epsilon^{-1} |f_{\tau}( u )|^2
\notag
\\
   & \leq
   2 c_1 |v|^{2q}
 +
   2 c_2
 +
   \frac{ -
          2^{\Tilde{\alpha}}
          c_0 |u|^{2q} 
         (1 + \beta^{\alpha} \tau^{\theta \alpha}
         |u|^{2q-2} )
        +
          2^{\Tilde{\alpha}}
          c_3^2 \tau \epsilon^{-1}
          |u|^{4q-2}
        +
           \hat{c}_5 \tau \epsilon^{-1} }
         { (1 + \beta \tau^{\theta}   
       |u|^{ \frac{2q-2}{\alpha} } )^{2\alpha}
      }
\notag\\
   & \leq
   2 c_1 |v|^{2q}
 +
   2 c_2 + \tfrac{\hat{c}_5 c_0 \beta^\alpha \tau^{\theta \alpha} }{ 2 c_3^2 }
 +
   \frac{ -  2^{\Tilde{\alpha}} c_0 |u|^{2q} 
          -  \tfrac12 \cdot 2^{\Tilde{\alpha}}
             c_0 
             \beta^{\alpha} \tau^{\theta \alpha } 
             |u|^{4q-2}
         }
   { ( 1 + \beta \tau^{\theta}  |u|^{ \frac{2q-2}{\alpha} } )^{2\alpha} }
\notag\\
& \quad
+
   \frac{ 
          -  \tfrac{1}{2} \cdot 2^{\Tilde{\alpha}}
             c_0 
             \beta^{\alpha} \tau^{\theta \alpha } 
             |u|^{4q-2}
          +
           2^{\Tilde{\alpha}}
          c_3^2 \tau \epsilon^{-1}
          |u|^{4q-2}
         }
   { ( 1 + \beta \tau^{\theta}  |u|^{ \frac{2q-2}{\alpha} } )^{ 2\alpha } }
\notag\\
   & \leq
   2 c_1 |v|^{2q}
 +
   2 c_2 + \tfrac{\hat{c}_5 c_0 \beta^\alpha \tau^{\theta \alpha}}{ 2 c_3^2 }
 +
   \bigg[
   \frac{ -  c_0 |u|^{2q-2} 
          -  \tfrac12 
             c_0 
             \beta^{\alpha} \tau^{\theta \alpha } 
             |u|^{4q-4}
         }
   { ( 1 + \beta \tau^{\theta}  |u|^{ \frac{2q-2}{\alpha} } )^{ 2\alpha} }
\Bigg]
\cdot
  |u|^2,
\label{eq:Ex2_(u+v)f_tau(u)+tau|f(u)|^2[TV4AC-25]}
\end{align}
where the last inequality
follows from
the condition \eqref{eq:epsilon_condition_f_tau[TV4AC-25]} for $q>1$, i.e., $2 c_3^2 \tau \epsilon^{-1} \leq c_0 \beta^\alpha \tau^{\theta \alpha}$.

To proceed further,
we consider two cases: $|u|\leq 1$ and $|u|>1$.
%
 %
For the former case $|u| \leq 1$, one easily derives from \eqref{eq:Ex2_(u+v)f_tau(u)+tau|f(u)|^2[TV4AC-25]} that 
\begin{align}\label{eq:|u|<1_verify[TV4AC-25]}
 2
 ( u + v )
 f_{\tau}( u )
 + 
 \tau
 \epsilon^{-1}
 |f_{\tau}( u )|^2
&\leq
   2
   c_1
   |v|^{2q}
 +
   2 
   c_2 
+ 
   \tfrac{
   \hat{c}_5 
   c_0 
   \beta^\alpha
   \tau^{\theta \alpha}
   }
   {
   2 
   c_3^2 
   }
\notag
\\
&\leq
    - 
    \tfrac
    { c_0 }
    { 2(1 + \beta \tau^\theta)^{2\alpha} }
  |u|^{2}
+
   2 
   c_1 
   |v|^{2q}
+
   2 
   c_2 
+  
    \tfrac
    { c_0 }
    { 2(1 + \beta \tau^\theta)^{2\alpha} }
+
   \tfrac{
   \hat{c}_5 
   c_0 
   \beta^\alpha
   \tau^{\theta \alpha}
   }
   {
   2 
   c_3^2 
   }
,
\end{align}
as required.
For the other case $|u|>1$, we introduce an auxiliary function defined by
\begin{align}
\Upsilon(x)
    :=
     \frac{ - 
             c_0 x 
            - 
             \tfrac12 
             c_0 
             \beta^{\alpha}
             \tau^{\theta \alpha } 
             x^2
         }
   { 
    ( 
      1 
      +
      \beta 
      \tau^{\theta} 
      x^{  \frac{1}{\alpha} } 
    )^{ 2\alpha }
   },
   \quad
   x \geq 0
   , \quad
   \alpha >0,
\end{align}
and therefore, the equation \eqref{eq:Ex2_(u+v)f_tau(u)+tau|f(u)|^2[TV4AC-25]} is recast as 
\begin{align}
\label{eq:|u|>1_verify[TV4AC-25]}
   2( u + v ) f_{\tau}( u ) + \tau \epsilon^{-1} |f_{\tau}( u )|^2
\leq
2 c_1 |v|^{2q}
+ 
 2 c_2 
 + 
 \tfrac{\hat{c}_5 c_0 \beta^\alpha \tau^{\theta \alpha}}{ 2 c_3^2 }
 +
\Upsilon(|u|^{2q-2}) 
\cdot  |u|^2.
\end{align}
Now it suffices to estimate the upper bound of $\Upsilon$,
%
%
for which we claim 
\begin{equation}\label{eq:Upsilon_bound[TV4AC-25]}
    \sup_{x \geq 1}
    \Upsilon(x)
    \leq
     \tfrac{ - c_0 }{ 2 
  (1 + \beta \tau^\theta )^{2\alpha} }
  .
\end{equation}
To validate the claim \eqref{eq:Upsilon_bound[TV4AC-25]},
one derives
\begin{equation}\label{eq:Upsilon'(x)[TV4AC-25]}
\Upsilon'(x)
=
     \frac{ -  c_0 
            -  c_0
             \beta^{\alpha} \tau^{\theta \alpha } 
             x
            +
             c_0 
             \beta \tau^{\theta} 
             x^\frac{1}{\alpha}
         }
   { ( 1 + \beta \tau^{\theta}  x^{\frac{1}{\alpha}} )^{ 1+{2\alpha} } }.
\end{equation}
For $\alpha \geq 1$, 
one sees
\begin{equation}
c_0
\beta 
\tau^\theta 
x^{\frac{1}{\alpha}}
\leq
c_0
\big(
1
+
(
\beta 
\tau^\theta 
x^{\frac{1}{\alpha}}
)^\alpha
\big)
=
c_0
+
c_0
\beta^\alpha \tau^{\theta \alpha} x,
\end{equation}
leading to $\Upsilon'(x) \leq 0$,
and thus
\begin{align}
\sup_{x\geq 1}
    \Upsilon(x)
\leq
    \Upsilon(1)
=
   \tfrac{ - c_0  
            - \frac12 
             c_0 
             \beta^{\alpha} \tau^{\theta \alpha } 
         }
   { ( 1 + \beta \tau^{\theta}  )^{ {2\alpha} } }
\leq
 \tfrac{ -c_0 }{  
  (1 + \beta \tau^\theta  )^{2\alpha} }
  .
\end{align}
For $\alpha \in (0,1)$,
following a same way as in \cite[(3.23)-(3.25) of Proposition 3.3]{wang2025uniform}, 
there exists $x^* > 0$ such that
\begin{equation}
    \Upsilon'(x) \leq 0 \ \text{for}\ 
    x \in[0,x^*],
    \quad
    \Upsilon'(x) \geq 0 \ \text{for}\  
    x \geq x^*.
\end{equation}
 Hence,
$\sup_{x\geq 1}\Upsilon(x)$ is bounded by 
$\Upsilon(1)
    \vee
    \lim_{x \rightarrow \infty} 
\Upsilon(x)$, i.e.,
\begin{align}
    \Upsilon(x)
\leq
    \Upsilon(1)
    \vee
    \lim_{x \rightarrow \infty} 
\Upsilon(x)
=
   \tfrac{ - c_0  
            - \frac12 
             c_0 
             \beta^{ \alpha } \tau^{\theta \alpha } 
         }
   { ( 1 + \beta \tau^{\theta} )^{ {2\alpha} } }
\vee
  \tfrac{-c_0}{
   2 \beta^\alpha \tau^{\theta \alpha}
  } 
\leq
 \tfrac{ -c_0 }{  
  2 (1 + \beta \tau^\theta  )^{2\alpha} }
  ,
\end{align}
as claimed.
Inserting this bound into \eqref{eq:|u|>1_verify[TV4AC-25]} thus completes the proof.
\end{proof}

\section{Uniform moment bounds of numerical approximations}
\label{sec:moment-boundedness[TV4AC-25]}


The subsequent error analysis relies on
uniform-in-time moment bounds for the proposed scheme.
To get started,
we recast the time-stepping recursion 
\eqref{eq:time_discretization[TV4AC-25]} as follows:
\begin{align}\label{eq:discret_sum[TV4AC-25]}
     X^{\tau}_{t_{m}}
    & =
      Y^{\tau}_{t_{m}}
     +
     \mathcal{O}_{t_{m}}
    ,
    \quad
    \text{ with }
    \quad
      Y^{\tau}_{t_{m}} : =
      E( t_{m} ) X^{\tau}_{0}
      +
      \tau
      \sum_{k=0}^{m-1}
      E(t_{m} - t_k)
      \epsilon^{-1}
      F_{\tau}
     \big( X^{\tau}_{t_k} \big),
\end{align}
for $m \in \N_0$ with $ X^{\tau}_{0} = X_0$. 
Clearly,
it follows from
$
Y^{\tau}_{0}
=
X_0
$
that,
for $m \in \N_0$,
\begin{equation}\label{eq:Y^tau[TV4AC-25]}
    Y^{\tau}_{t_{m+1}}
    =
    E(\tau) Y^{\tau}_{t_{m}}
    +
    \tau E(\tau) \epsilon^{-1}
    F_\tau( Y^{\tau}_{t_{m}} + \mathcal{O}_{t_{m}} ).
\end{equation}

Before proceeding further, we introduce a modified version of \cite[Lemma 4.3]{wang2025uniform},
tailored for our analysis to 
refine the estimate (4.9)
therein.



\begin{lem}\label{lem:A+rB_inequal[TV4AC-25]}
    Let $\rho \geq 1$ be any integer.
    Then for any $\mathbb{A} ,\mathbb{B} \geq 0$ and  $r, \upsilon > 0$, it holds that
\begin{equation}
    (
    \mathbb{A} 
    +
    r
    \mathbb{B}
    )^{\rho}
 \leq
    e^{ 
    (\rho-1) 
    \upsilon 
    r
    } 
    \mathbb{A}^{\rho}  
    + 
     r
\left( 
   r ^{\rho-1} 
  + 
  (1 +  ( \tfrac{2}{ \upsilon } )^{\rho-1} )
  ( 1 + r^{\rho-1} )
  e^{ \rho-1 }
 \right)  
   \mathbb{B}^{\rho}.
\end{equation}
\end{lem}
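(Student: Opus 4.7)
The plan is to expand $(\mathbb{A}+r\mathbb{B})^{\rho}$ via the binomial theorem and apply Young's inequality to each cross term, with the Young parameter chosen so that the accumulated $\mathbb{A}^{\rho}$-coefficients collapse into a binomial sum that exponentiates cleanly. The case $\rho=1$ reduces to $\mathbb{A}+r\mathbb{B}\le \mathbb{A}+5r\mathbb{B}$, which is trivial, so I would focus on $\rho\ge 2$.

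First, I would write
$(\mathbb{A}+r\mathbb{B})^{\rho} = \mathbb{A}^{\rho} + r^{\rho}\mathbb{B}^{\rho} + \sum_{k=1}^{\rho-1}\binom{\rho}{k} r^{k}\mathbb{A}^{\rho-k}\mathbb{B}^{k}$,
and for each $1\le k\le \rho-1$ apply Young's inequality with conjugate exponents $p_{k}=\rho/(\rho-k)$, $q_{k}=\rho/k$ and parameter $\varepsilon_{k}=\upsilon^{k}$, which yields
$\mathbb{A}^{\rho-k}\mathbb{B}^{k}\le \tfrac{\rho-k}{\rho}\upsilon^{k}\mathbb{A}^{\rho} + \tfrac{k}{\rho}\upsilon^{-(\rho-k)}\mathbb{B}^{\rho}$.
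Invoking the combinatorial identities $\binom{\rho}{k}\tfrac{\rho-k}{\rho}=\binom{\rho-1}{k}$ and $\binom{\rho}{k}\tfrac{k}{\rho}=\binom{\rho-1}{k-1}$, the total $\mathbb{A}^{\rho}$-coefficient equals $\sum_{k=0}^{\rho-1}\binom{\rho-1}{k}(r\upsilon)^{k}=(1+r\upsilon)^{\rho-1}\le e^{(\rho-1)r\upsilon}$ via $1+x\le e^{x}$, which is exactly the desired exponential prefactor.

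For the $\mathbb{B}^{\rho}$-coefficient, after the reindexing $j=k-1$ I would obtain $r^{\rho}+r\upsilon^{-(\rho-1)}\sum_{j=0}^{\rho-2}\binom{\rho-1}{j}(r\upsilon)^{j}=r^{\rho}+r\upsilon^{-(\rho-1)}\bigl[(1+r\upsilon)^{\rho-1}-(r\upsilon)^{\rho-1}\bigr]$, and the $r^{\rho}$ term is exactly cancelled by the subtracted piece $r\upsilon^{-(\rho-1)}(r\upsilon)^{\rho-1}=r^{\rho}$, leaving $r\upsilon^{-(\rho-1)}(1+r\upsilon)^{\rho-1}$. The elementary convexity bound $(1+x)^{\rho-1}\le 2^{\rho-1}(1+x^{\rho-1})$ with $x=r\upsilon$, together with $2^{\rho-1}\le e^{\rho-1}$, then dominates this by $r(2/\upsilon)^{\rho-1}e^{\rho-1}+r^{\rho}e^{\rho-1}$, each summand of which appears in the expansion of the stated bound $r\bigl(r^{\rho-1}+(1+(2/\upsilon)^{\rho-1})(1+r^{\rho-1})e^{\rho-1}\bigr) = r^{\rho}+re^{\rho-1}\bigl(1+r^{\rho-1}+(2/\upsilon)^{\rho-1}+(2r/\upsilon)^{\rho-1}\bigr)$.

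No genuine analytical obstacle arises; the argument is essentially bookkeeping. The only delicate step is the choice $\varepsilon_{k}=\upsilon^{k}$, which is precisely what aligns the $\mathbb{A}^{\rho}$-coefficient with the binomial expansion of $(1+r\upsilon)^{\rho-1}$ and hence with a clean exponential in $(\rho-1)\upsilon r$; any other scaling either destroys this identity or forces extra $\upsilon$-dependent factors on the leading term.
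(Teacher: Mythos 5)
Your proof is correct. Every step checks: the weighted Young/AM--GM bound $\mathbb{A}^{\rho-k}\mathbb{B}^{k}\le \tfrac{\rho-k}{\rho}\upsilon^{k}\mathbb{A}^{\rho}+\tfrac{k}{\rho}\upsilon^{-(\rho-k)}\mathbb{B}^{\rho}$ is the weighted AM--GM applied to $\upsilon^{k}\mathbb{A}^{\rho}$ and $\upsilon^{-(\rho-k)}\mathbb{B}^{\rho}$ (your label ``$\varepsilon_k=\upsilon^k$'' is convention-dependent but the displayed inequality is right), the identities $\binom{\rho}{k}\tfrac{\rho-k}{\rho}=\binom{\rho-1}{k}$ and $\binom{\rho}{k}\tfrac{k}{\rho}=\binom{\rho-1}{k-1}$ hold, the cancellation of $r^{\rho}$ in the $\mathbb{B}^{\rho}$-coefficient is exact, and the final bound $r\upsilon^{-(\rho-1)}(1+r\upsilon)^{\rho-1}\le e^{\rho-1}\bigl(r(2/\upsilon)^{\rho-1}+r^{\rho}\bigr)$ is dominated by the stated expression. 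The route differs from the paper's in a worthwhile way: the paper imports an intermediate inequality from an external lemma (steps (4.6)--(4.8) of the cited reference), whose $\mathbb{A}^{\rho}$-coefficient is $1+\tfrac{(\rho-1)\upsilon r}{2}e^{(\rho-1)\upsilon r/2}$ and whose $\mathbb{B}^{\rho}$-coefficient is a binomial sum $\sum_{j=1}^{\rho-1}\binom{\rho-1}{j-1}r^{j}(2/\upsilon)^{\rho-j}$; it then needs two extra massaging steps (bounding the binomial coefficients by $(\rho-1)^{j-1}/(j-1)!$ and summing the exponential series, and converting the $\mathbb{A}^{\rho}$-prefactor to $e^{(\rho-1)\upsilon r}$). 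Your choice of Young parameter makes both coefficients collapse to exact closed forms, $(1+r\upsilon)^{\rho-1}$ and $r\upsilon^{-(\rho-1)}(1+r\upsilon)^{\rho-1}$, so the argument is self-contained, shorter, and sharper at the intermediate stage, at the cost of redoing the bookkeeping the paper outsources.
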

\begin{proof}
In a similar manner as (4.6)-(4.8) in \cite[Lemma 4.3]{wang2025uniform},
it holds
\begin{equation}
    (
    \mathbb{A} 
    +
    r
    \mathbb{B}
    )^{\rho}
 \leq
 \mathbb{A}^{\rho}
 +
 r^\rho
 \mathbb{B}^\rho
 +
 \tfrac
 {(\rho-1) \upsilon r}
 {2}
 e^{
 \frac
 { (\rho-1) \upsilon r }
 {2}
 }
 \mathbb{A}^\rho
 +
   \Big(
   \sum_{j=1}^{\rho-1}
       \tfrac{(\rho-1)!}{(j-1)!(\rho-j)!}
       r^j 
    ( \tfrac{2}{ \upsilon } )^{\rho-j}
   \Big)
   \mathbb{B}^\rho,
\end{equation}
where for the term 
$  
\sum_{j=1}^{\rho-1}
       \tfrac{(\rho-1)!}{(j-1)!(\rho-j)!}
       r^j 
    ( \tfrac{2}{ \upsilon } )^{\rho-j}
$
we provide a slight modification of \cite[(4.9)]{wang2025uniform} such that
\begin{equation}
\sum_{j=1}^{\rho-1}
       \tfrac{(\rho-1)!}{(j-1)!(\rho-j)!}
       r^j 
    ( \tfrac{2}{ \upsilon } )^{\rho-j}
\leq
    \sum_{j=1}^{\rho-1}
       \tfrac{(\rho-1)^{j-1} }{(j-1)!}
       r^j 
    ( \tfrac{2}{ \upsilon } )^{\rho-j}
\leq
r
(
1
+
(
\tfrac
{2}
{\upsilon}
)^{\rho-1}
)
(
1
+
r^{\rho-1}
)
e^{\rho-1}
.
\end{equation}
The desired result then follows immediately by further employing that
\begin{equation}
\Big(
1
+
\tfrac
{(\rho-1)\upsilon r }{2}
e^{
\frac
{(\rho-1)\upsilon r }{2}
}
\Big)
\mathbb{A}^\rho
\leq
\big(
1
+
\tfrac
{(\rho-1)\upsilon r }{2}
\big)
e^{
\frac
{(\rho-1)\upsilon r }{2}
}
\mathbb{A}^\rho
\leq
e^{
(\rho-1)\upsilon r
}
\mathbb{A}^\rho.
\end{equation}
\end{proof}

The following theorem illustrates the uniform (in both time and $ \epsilon$) moment bounds of the numerical approximation $X^\tau$.

\begin{thm}[Uniform moment bounds of $X^\tau$]
\label{thm:X^tau-bound[TV4AC-25]}
    Let Assumptions \ref{assump:A(linear_operator)[TV4AC-25]}-\ref{assump:X_0(Initial Value)[TV4AC-25]} and condition \eqref{eq:epsilon_condition_f_tau[TV4AC-25]} hold. 
     For any $p \geq 1$ 
     and any integer $\rho \geq 1$, there exists 
      a constant $  C( p,\rho, q, \alpha)>0$
     such that the numerical approximation $ X^{\tau}_{t_m}, m \in \N_0$ produced by \eqref{eq:time_discretization[TV4AC-25]} obeys
    \begin{align}
        \sup_{m \in \N_0} 
        \big\|
          X^{\tau}_{t_m} 
        \big\|_{L^p( \Omega; L^{2\rho}(\mathcal{D}) )}
   &  \leq
    \big\| X_0 \big\|_{L^{p}(\Omega; L^{2\rho}(\mathcal{D}) )}
  +
     C( p, \rho, q, \alpha)
     \cdot
     \big(
     1 
     +
     (
     \beta
     \tau^{\theta} 
     )^{ \alpha(2 - \frac{1}{2\rho}) }
     \big).
\end{align}

\end{thm}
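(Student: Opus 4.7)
Since $Y^\tau_{t_m}$ obeys the clean recursion \eqref{eq:Y^tau[TV4AC-25]}, $Y^\tau_{t_{m+1}} = E(\tau)\bigl(Y^\tau_{t_m} + \tau\epsilon^{-1} F_\tau(X^\tau_{t_m})\bigr)$, the first move is to exploit the $L^{2\rho}(\mathcal{D})$-contractivity of the Dirichlet heat semigroup (valid for every $\rho \geq 1$) to reduce matters to a pointwise estimate: $\|Y^\tau_{t_{m+1}}\|_{L^{2\rho}}^{2\rho} \leq \|Y^\tau_{t_m} + \tau\epsilon^{-1}F_\tau(X^\tau_{t_m})\|_{L^{2\rho}}^{2\rho}$. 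Applying the dissipative estimate \eqref{eq:(u+v)f_tau(u)[TV4AC-25]} of Proposition \ref{prop:F_tau[TV4AC-25]} pointwise with $u = X^\tau_{t_m}(x)$ and $v = -\mathcal{O}_{t_m}(x)$ (so that $u+v = Y^\tau_{t_m}(x)$), multiplying by $\tau\epsilon^{-1}$, and adding $Y^\tau_{t_m}(x)^2$, one gets
$$
\bigl(Y^\tau_{t_m}(x) + \tau\epsilon^{-1} f_\tau(X^\tau_{t_m}(x))\bigr)^2 \leq Y^\tau_{t_m}(x)^2 - \tfrac{c_0 \tau\epsilon^{-1}}{2(1+\beta\tau^\theta)^{2\alpha}}\, X^\tau_{t_m}(x)^2 + \tau\epsilon^{-1}\widetilde{c}_1(1+|\mathcal{O}_{t_m}(x)|^{2q}) + \tau\epsilon^{-1}\widetilde{c}_2\,\beta^\alpha\tau^{\theta\alpha}.
$$
The elementary inequality $X^\tau_{t_m}(x)^2 \geq \tfrac12 Y^\tau_{t_m}(x)^2 - |\mathcal{O}_{t_m}(x)|^2$ converts this into $(1-\kappa) Y^\tau_{t_m}(x)^2 + \mathcal{R}_m(x)$, where $\kappa := c_0\tau\epsilon^{-1}/(4(1+\beta\tau^\theta)^{2\alpha})$ and $\mathcal{R}_m$ collects only $\mathcal{O}_{t_m}$-terms and a deterministic residue.

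Next, raise the previous inequality to the $\rho$-th power and apply Lemma \ref{lem:A+rB_inequal[TV4AC-25]} with $\mathbb{A} = (1-\kappa)Y^\tau_{t_m}(x)^2$, $r=1$, $\mathbb{B} = \mathcal{R}_m(x)$, and the tunable parameter chosen proportionally to $\kappa/(\rho-1)$. This choice guarantees $e^{(\rho-1)\upsilon}(1-\kappa)^\rho \leq 1 - c_\rho\kappa$ for some $c_\rho>0$, at the price of a coefficient $C_\rho \kappa^{-(\rho-1)}$ on $\mathcal{R}_m(x)^\rho$. Integrating over $\mathcal{D}$ and invoking $L^{2\rho}$-contractivity produces the pathwise recursion
$$
\|Y^\tau_{t_{m+1}}\|_{L^{2\rho}}^{2\rho} \leq (1-c_\rho\kappa)\|Y^\tau_{t_m}\|_{L^{2\rho}}^{2\rho} + C_\rho \kappa^{-(\rho-1)}\! \int_\mathcal{D}\mathcal{R}_m(x)^\rho\,\mathrm{d}x,
$$
and a discrete Gronwall argument then gives the uniform-in-$m$ bound $\|Y^\tau_{t_m}\|_{L^{2\rho}}^{2\rho} \leq \|X_0\|_{L^{2\rho}}^{2\rho} + C_\rho\kappa^{-\rho}\sup_k\int_\mathcal{D}\mathcal{R}_k(x)^\rho\,\mathrm{d}x$.

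To convert this into the claimed moment bound, expand $\mathcal{R}_m^\rho$ into three contributions whose $\kappa^{-\rho}$-weighted versions read: (i) a pure noise piece of order $\|\mathcal{O}_{t_m}\|_{L^{2\rho}}^{2\rho}$; (ii) a mixed piece of order $(1+\beta\tau^\theta)^{2\alpha\rho}(1+\|\mathcal{O}_{t_m}\|_{L^{2q\rho}}^{2q\rho})$; and (iii) a deterministic piece which, thanks to the step-size condition \eqref{eq:epsilon_condition_f_tau[TV4AC-25]} (equivalently $\tau\epsilon^{-1}\beta^\alpha\tau^{\theta\alpha} \leq (c_0/(2c_3^2))(\beta\tau^\theta)^{2\alpha}$), reduces to $C_\rho(1+\beta\tau^\theta)^{2\alpha\rho}(\beta\tau^\theta)^{\alpha\rho}$ and stays bounded independently of $\epsilon$. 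Taking the $L^p(\Omega)$-norm, using Lemma \ref{lem:O_bound[TV4AC-25]} together with the embedding $V\hookrightarrow L^{2q\rho}(\mathcal{D})$ to bound every $\mathcal{O}_{t_m}$-moment uniformly in $t$, and finally combining $\|X^\tau_{t_m}\|_{L^{2\rho}} \leq \|Y^\tau_{t_m}\|_{L^{2\rho}} + \|\mathcal{O}_{t_m}\|_{L^{2\rho}}$, extracting the $1/(2\rho)$-th root delivers the claimed $(\beta\tau^\theta)^{\alpha(2-1/(2\rho))}$-dependence.

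\emph{Main obstacle.} The delicate point is tuning the free parameter $\upsilon$ in Lemma \ref{lem:A+rB_inequal[TV4AC-25]} so that the contractive factor $1-c_\rho\kappa$ survives upon raising to the $\rho$-th power, while controlling the accompanying $\kappa^{-(\rho-1)}$-blow-up on the remainder. The step-size restriction \eqref{eq:epsilon_condition_f_tau[TV4AC-25]} is precisely what transforms the dangerous combinations of $\tau\epsilon^{-1}$ with $\beta^\alpha\tau^{\theta\alpha}$ into bounded functions of $\beta\tau^\theta$; without it, the cancellation between $\kappa^{-\rho}$ and the $(\tau\epsilon^{-1})^\rho$-factors in $\mathcal{R}_m^\rho$ fails and the bound blows up as $\epsilon \to 0$.
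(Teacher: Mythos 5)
Your proposal is correct and follows essentially the same route as the paper: reduction to $Y^{\tau}$ via the $L^{2\rho}$-contractivity of $E(\tau)$, pointwise use of the dissipativity estimate \eqref{eq:(u+v)f_tau(u)[TV4AC-25]}, raising to the $\rho$-th power with Lemma \ref{lem:A+rB_inequal[TV4AC-25]}, geometric summation, and Lemma \ref{lem:O_bound[TV4AC-25]} for the stochastic convolution. The only divergence is internal to Lemma \ref{lem:A+rB_inequal[TV4AC-25]}: the paper takes $r=\tau\epsilon^{-1}$ and $\upsilon=\widetilde{c}_0$ after first converting $1-\widetilde{c}_0\tau\epsilon^{-1}$ into $e^{-\widetilde{c}_0\tau\epsilon^{-1}}$ so that the lemma's growth factor is absorbed by the decay of $\mathbb{A}^{\rho}$, whereas you take $r=1$ and $\upsilon\propto\kappa/(\rho-1)$ and let the resulting $\kappa^{-\rho}$ cancel against the factor $\kappa^{\rho}$ carried by $\mathcal{R}_m^{\rho}$ (since every term of $\mathcal{R}_m$ contains $\tau\epsilon^{-1}=\mathrm{const}\cdot(1+\beta\tau^\theta)^{2\alpha}\kappa$) — both parametrizations yield the stated bound, yours in fact with the slightly smaller exponent $\tfrac{3\alpha}{2}\le\alpha(2-\tfrac{1}{2\rho})$, provided one also notes that $\kappa\le c_0^2/(8c_3^2)<1$ under \eqref{eq:epsilon_condition_f_tau[TV4AC-25]} so that $\mathbb{A}=(1-\kappa)Y^2\ge 0$ as the lemma requires.
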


\begin{proof}

Thanks to \eqref{eq:discret_sum[TV4AC-25]} and Lemma \ref{lem:O_bound[TV4AC-25]}, it suffices to 
derive
a 
priori estimates for $Y^{\tau}_{t_m},m \in \N_0$ in $L^{2\rho}$-norm.
%
Recall that for any $ t \geq 0 $ and integer $\rho \geq 1$, the semi-group operator $ E(t):=e^{-At},t \geq 0$ satisfies 
 a contractive property:
 $ \| E(t) u \|_{L^{2\rho}} \leq \| u \|_{ L^{2\rho} }$,
$u \in L^{2\rho}(\mathcal{D})$
(see, e.g., \cite[Proposition 4.2]{wang2025uniform}).
As a result, we deduce from  \eqref{eq:Y^tau[TV4AC-25]} that
\begin{equation}\label{eq::Y^tau(L^4q-2_bound)first_step_inprf[TV4AC-25]}
\begin{aligned}
    \big\|
    Y^{\tau}_{t_{m+1}} 
    \big\|_{ L^{2\rho} }
&=    
    \big\|
    E(\tau)
    \big(  Y^{\tau}_{t_m} 
       +
       \tau
       \epsilon^{-1}
       F_{\tau}(  Y^{\tau}_{t_m} +  \mathcal{O}_{t_m} ) 
    \big)
    \big\|_{ L^{2\rho} }   
&\leq    
    \big\|
     Y^{\tau}_{t_m} 
       +
       \tau
       \epsilon^{-1}
       F_{\tau}(  Y^{\tau}_{t_m} +  \mathcal{O}_{t_m})
    \big\|_{ L^{2\rho} } . 
\end{aligned}
\end{equation}
Bearing  \eqref{eq:(u+v)f_tau(u)[TV4AC-25]} in mind and denoting
$
\widetilde{c}_0 
:=
\frac
{ c_0 }
{ 4(1 + \beta \tau^\theta)^{2\alpha} }
$
for short, one acquires
\begin{align}
\label{equation:expansion-Y}
&\quad
     \big| 
     Y^{\tau}_{t_m}(\cdot) 
     + 
     \tau
     \epsilon^{-1}
     f_{\tau}(  Y^{\tau}_{t_m} +  \mathcal{O}_{t_m} )(\cdot) 
     \big|^2
  \notag \\
&
 =
     \big| Y^{\tau}_{t_m}(\cdot) \big|^2
     +
     2 \tau
     \epsilon^{-1}
     Y^{\tau}_{t_m}(\cdot) 
     f_{\tau}(  Y^{\tau}_{t_m}(\cdot) +  \mathcal{O}_{t_m}(\cdot) )
     +
     \tau^2
     \epsilon^{-2}
     \big| 
     f_{\tau}
     \big(  Y^{\tau}_{t_m}(\cdot) +  \mathcal{O}_{t_m}(\cdot) \big)
     \big|^2
  \notag \\
&
\leq
     \big| Y^{\tau}_{t_m}(\cdot) \big|^2
     +
     \tau
     \epsilon^{-1}
     \Big(
     -
      2 \widetilde{c}_0
     \big|
     Y^{\tau}_{t_m}(\cdot) +  \mathcal{O}_{t_m}(\cdot) 
     \big|^2
     +
     \widetilde{c}_1 
     \big( 
        1
        + 
       \big|
       \mathcal{O}_{t_m}(\cdot)
       \big|^{2q} 
     \big)
     +
     \widetilde{c}_2
     \beta^\alpha
     \tau^{\theta \alpha}
     \Big)
     \notag \\
&
\leq
 \big( 
     1 
     - 
     \widetilde{c}_0 
     \tau
     \epsilon^{-1}
\big)
     \big| Y^{\tau}_{t_m}(\cdot) \big|^2
+   
     2 \widetilde{c}_0 
     \tau
     \epsilon^{-1}
     \big| \mathcal{O}_{t_m}(\cdot) \big|^2
+ 
\tau \epsilon^{-1}
\Big(
     \widetilde{c}_1 
      \big( 
        1 
        +
        \big|
        \mathcal{O}_{t_m}(\cdot)
        \big|^{2q} 
      \big)
+
 \widetilde{c}_2
    \beta^{\alpha}
    \tau^{\theta \alpha}
\Big)
  \notag \\
&
\leq
     e^{
     -
     \widetilde{c}_0
     \tau
     \epsilon^{-1}
        }
     \big| Y^{\tau}_{t_m}(\cdot) \big|^2
     +
     \tau
     \epsilon^{-1}
\Big(
     (2\widetilde{c}_0 + \widetilde{c}_1 ) 
     \big(  1 + 
       \big|
          \mathcal{O}_{t_m}(\cdot)
        \big|^{2q }
     \big)
+
 \widetilde{c}_2
    \beta^{\alpha}
    \tau^{\theta \alpha}
\Big),
\end{align}
where 
we 
further used
the Young inequality 
\begin{equation} 
-
4 \widetilde{c}_0
Y^{\tau}_{t_m}(\cdot)
\mathcal{O}_{t_m}(\cdot)
\leq
 \widetilde{c}_0
|Y^{\tau}_{t_m}(\cdot)|^2 
+ 
4
\widetilde{c}_0 
|\mathcal{O}_{t_m}(\cdot)|^2,
\end{equation}
as well as the inequality $1-y\le e^{-y}$ for $y>0$
and the fact that $|\mathcal{O}_{t_m} (\cdot )|^2 \le 1 + |\mathcal{O}_{t_m} (\cdot )|^{2q}$ for $q > 1$.
%
%
%
Therefore,
%
by taking $\rho$-th power on both sides of \eqref{equation:expansion-Y} and applying Lemma \ref{lem:A+rB_inequal[TV4AC-25]} with
$r = \tau \epsilon^{-1}$, 
$
\upsilon
= 
\widetilde{c}_0$
and
\begin{equation}
\mathbb{A}
 =
     e^{
     -
     \widetilde{c}_0
     \tau
     \epsilon^{-1} 
        }
     | Y^{\tau}_{t_m}(\cdot) |^2,
\quad
\mathbb{B}
 =
        (
            2\widetilde{c}_0
            +
            \widetilde{c}_1 
          )
       (  1 + 
             |
             \mathcal{O}_{t_m}(\cdot)
             |^{2q }
       )
       +
 \widetilde{c}_2
       \beta^\alpha
       \tau^{ \theta \alpha },
\end{equation}
one further gets
\begin{equation}
\begin{aligned}
&\big|
    Y^{\tau}_{t_m}(\cdot) + \tau \epsilon^{-1} F_{\tau}(  Y^{\tau}_{t_m} +  \mathcal{O}_{t_m} )(\cdot) 
    \big|^{2\rho}
\\
&
\leq
    e^{   
    (\rho-1)
    \widetilde{c}_0 
    \tau
    \epsilon^{-1} 
    }
    \Big( 
    e^{
    -     
    \widetilde{c}_0 
    \tau
    \epsilon^{-1} 
    }
     \big|Y^{\tau}_{t_m}(\cdot)\big|^2
     \Big)^{\rho} 
\\
&\quad
    + 
    \tfrac{\tau}{\epsilon}
\left( 
  ( \tfrac{\tau}{\epsilon} )^{\rho-1} 
  + 
( 1
    + 
    ( 
    \tfrac
    { 
    2 
    }
    { \widetilde{c}_0 } 
    )^{\rho-1} 
)
( 
1
+
( \tfrac{\tau}{\epsilon} )^{\rho-1}
)
  e^{ \rho-1}
 \right)  
   \Big(
         (
            2 \widetilde{c}_0
            +
            \widetilde{c}_1 
          )
       \big(  1 + 
             \big|
             \mathcal{O}_{t_m}(\cdot)
             \big|^{2q }
       \big)
       +
 \widetilde{c}_2
       \beta^\alpha
       \tau^{ \theta \alpha }
    \Big)^{\rho}
\\
&\leq 
    e^{
    - 
    \widetilde{c}_0 
    \tau
    \epsilon^{-1} 
    }
     \big|Y^{\tau}_{t_m}(\cdot)\big|^{2\rho} 
     +
     C 
     \tau \epsilon^{-1} 
     \big(
     1
     +
     (
     \beta
     \tau^{\theta }
     )^{\alpha( 4\rho-3)}
     \big)
     \big(
     1 +  \big|
          \mathcal{O}_{t_m}(\cdot)
        \big|^{2q\rho }
     \big)
     ,
\end{aligned}
\end{equation}
where we used the condition \eqref{eq:epsilon_condition_f_tau[TV4AC-25]} that $\tau \epsilon^{-1} \leq \frac{c_0}{2 c_3^2} \beta^\alpha \tau^{\theta \alpha}$
and recalled
$
\widetilde{c}_0 
=
\frac
{ c_0 }
{ 4(1 + \beta \tau^\theta)^{2\alpha} }
$.
%
By integrating the above inequality over $\mathcal{D}$ and recalling 
\eqref{eq::Y^tau(L^4q-2_bound)first_step_inprf[TV4AC-25]}, we thus obtain
\begin{equation}
\begin{aligned}
    \big\| Y^{\tau}_{t_{m+1}} \big\|_{L^{2\rho}}^{2\rho}
&
\leq
    e^{
    - 
    \widetilde{c}_0 
    \tau
    \epsilon^{-1} 
    }
    \big\| Y^{\tau}_{t_m} \big\|_{L^{2\rho}}^{2\rho}
  +
     C 
     \tau \epsilon^{-1} 
     \big(
     1
     +
      (
     \beta
     \tau^{\theta }
     )^{\alpha( 4\rho-3)}
     \big)
    \Big(
      1 + \big\| \mathcal{O}_{t_m} \big\|_V^{ 2q \rho } 
    \Big)  
\\
&
\leq
     e^{
    - 
    (m+1)
    \widetilde{c}_0 
    \tau
    \epsilon^{-1} 
    }
    \big\| Y^{\tau}_{0} \big\|_{L^{2\rho}}^{2\rho}
  +
     C
    \tau 
    \epsilon^{-1} 
     \big(
     1
     +
     (
     \beta
     \tau^{\theta }
     )^{\alpha( 4\rho-3)}
     \big)
    \sum_{k=0}^{m}
    e^{
    - 
    (m-k)
    \widetilde{c}_0 
    \tau
    \epsilon^{-1} 
    }
    \Big(
      1 + \big\| \mathcal{O}_{t_k} \big\|_V^{ 2q \rho } 
    \Big).
\end{aligned}
\end{equation}
Furthermore, one notes that 
\begin{equation}
\tau \epsilon^{-1} 
    \sum_{k=0}^{m}
    e^{
    -  
    (m-k)
    \widetilde{c}_0 
    \tau
    \epsilon^{-1} 
    }
=
\frac
{
\tau \epsilon^{-1} 
(
1
- 
e^{-\widetilde{c}_0 \tau \epsilon^{-1} (m+1) } 
)
}
{ 1 - e^{-\widetilde{c}_0 \tau \epsilon^{-1} } }
\leq
\frac
{ \tau \epsilon^{-1}  }
{
1 
- 
e^{
- 
\widetilde{c}_0 
\tau
\epsilon^{-1} 
}
}
\leq
\tfrac
{ 1}
{\widetilde{c}_0 }
+
\tau
\epsilon^{-1}
\leq
C
\big(
1
+
(\beta \tau^\theta)^{2\alpha}
\big)
,
\end{equation}
where we employed the inequality that 
$\frac{y}{1 - e^{-y}} \leq 1+ y$ 
for all $y>0$, and recalled again the condition \eqref{eq:epsilon_condition_f_tau[TV4AC-25]} together with 
$
\widetilde{c}_0 
=
\frac
{ c_0 }
{ 4(1 + \beta \tau^\theta)^{2\alpha} }
$.
Using the fact
$Y^{\tau}_{0} = X_0$ and Lemma \ref{lem:O_bound[TV4AC-25]}
then ensures for any $p \geq 1$ and $m \in \N$,
\begin{equation}\label{eq:Y^tau-(4q-2)bound[TV4AC-25]}
    \big\|
      Y^{\tau}_{t_{m}} 
    \big\|_{L^{p}(\Omega; L^{2\rho}(\mathcal{D}) )}
\leq
    \big\| X_0 \big\|_{L^{p}(\Omega; L^{2\rho}(\mathcal{D}) )}
  +
     C( p, \rho, q, \alpha)
     \cdot
     \big(
     1
     +
     (
     \beta
     \tau^{\theta }
     )^{ \alpha (2 - \frac{1}{2\rho}) }
     \big).  
\end{equation}
%
%
%
%
This together with Lemma \ref{lem:O_bound[TV4AC-25]} implies
\begin{equation}
            \sup_{m \in \N_0} 
        \big\|
          X^{\tau}_{t_m} 
        \big\|_{L^p( \Omega; L^{2\rho}(\mathcal{D}) )}
     \leq
    \big\| X_0 \big\|_{L^{p}(\Omega; L^{2\rho}(\mathcal{D}) )}
  +
     C( p, \rho, q, \alpha)
     \cdot
     \big(
     1
     +
     (
     \beta
     \tau^{\theta }
     )^{ \alpha (2 - \frac{1}{2\rho}) }
     \big),
\end{equation}
as required.
\end{proof}

\section{Regularity estimates of the Kolmogorov equation}
\label{sec:Kolmogorov_equation[TV4AC-25]}
%
%
%

The subsequent weak convergence analysis relies on the Kolmogorov equation associated with the stochastic dynamics.  
For the considered SPDEs \eqref{eq:considered_SEE[TV4AC-25]}, the drift grows superlinearly and thus violates Lipschitz continuity,
which prevents a direct use of the Kolmogorov equation.
To resolve this, we construct an auxiliary process whose drift is modified to be globally Lipschitz continuous, 
for which the corresponding Kolmogorov equation is well posed.  
This enables us to establish regularity estimates essential for the subsequent weak convergence analysis.

\subsection{An auxiliary process and its uniform moment bound
}

Inspired by the proposed modification of $F$ 
as described in \eqref{eq:f_tau[TV4AC-25]}, we introduce the following auxiliary process:
%
%
\begin{equation}\label{eq:spde_auxiliary[TV4AC-25]}
        \mathbb{X}^\delta(t)
        = 
        E(t) \mathbb{X}^\delta_0
               +
               \int_0^t E(t-s) 
               \epsilon^{-1}
               F_\delta( \mathbb{X}^{\delta}(s) ) \,\mathrm{d}s 
               +
               \int_0^t E(t-s) \,\mathrm{d}W(s),
               \quad X_0^{\delta} \in H, \
               t \geq 0,
    \end{equation}
where $F_\delta \colon L^{4q-2}(\mathcal{D}) \rightarrow H$ is a nonlinear Nemytskii operator such that
$F_\delta(u)(x) := f_\delta( u(x) )$
with $f_\delta \colon \R \rightarrow \R$ defined by
\begin{equation}\label{eq:f_delta[TV4AC-25]}
       f_{\delta}( u )
       :=
       \tfrac{ f( u ) }
       {   1 
            + 
            \sqrt{\delta}
          |u|^{ 2q-2 }
        },
        \quad
        u \in \R,
    \end{equation}
where we let $\delta\in (0,1]$ be
sufficiently small.
Moreover, for any 
$\upsilon, \psi \in L^{4q-2}(\mathcal{D})$,
we define
\begin{equation}
\label{eq:def_DF_delta[TV4AC-25]}
    \big(
    F_{\delta}'(\upsilon)(\psi)
    \big)
    (x)
    :=
    f_{\delta}'( \upsilon(x)  )
    \psi(x),
    \quad
    x \in \mathcal{D}.
\end{equation}


Note that the modification \eqref{eq:f_delta[TV4AC-25]} is a special case of \eqref{eq:epsilon_condition_f_tau[TV4AC-25]} with $\alpha= \beta = 1$ and $\theta=\tfrac12$.
In this particular setting, the counterpart of condition \eqref{eq:epsilon_condition_f_tau[TV4AC-25]} reduces to
$2 c_3^2 \sqrt{\delta} \leq c_0 \epsilon$, 
which can be always satisfied as $\delta \to 0$ finally.
Consequently,
such a regularized nonlinearity $f_\delta$  \eqref{eq:f_delta[TV4AC-25]} inherits 
similar
properties to those established in Proposition \ref{prop:F_tau[TV4AC-25]} with fixed $\alpha= \beta =1, \theta=\tfrac12$ and $\tau$ therein replaced by $\delta$.

%
%
%
%
The following lemma 
shows additional properties for the derivative $f'_\delta$.

\begin{lem}\label{lem:f'_delta[TV4AC-25]}
Let Assumption \ref{assump:F(Nonlinearity)[TV4AC-25]} hold with $q >1$ and let $f_\delta$ be defined by \eqref{eq:f_delta[TV4AC-25]}.
Then there exist
constants $\widetilde{L}_f$ and $\widetilde{l}_f$ 
independent of $ \delta$ such that
\begin{align}
\label{eq:bound_f'_delta[TV4AC-25]}
    \sup_{u \in \R}
    f_\delta' (u) 
&   
\leq
    \widetilde{L}_f
    ,  
\\
   \label{eq:|Df_delta|_V[TV4AC-25]}
   | f_\delta' (u) |
&   
\leq
\widetilde{l}_f 
\big(
1
+
(
|u|^{2q-2}
\wedge
\delta^{ - \frac12 }
)
\big).
\end{align}

\end{lem}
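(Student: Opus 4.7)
The plan is to apply the quotient rule to $f_\delta = f/g$ with $g(u) := 1+\sqrt{\delta}|u|^{2q-2}$ and then exploit the polynomial structure of $f(u) = -c_f u^{2q-1} + f_0(u)$ together with the one-sided bound \eqref{eq:asuume_f'(u)[TV4AC-25]} and the growth assumptions on $f_0$, $f_0'$. For $u \neq 0$ I would write
\begin{equation*}
f_\delta'(u) = \frac{f'(u)\,g(u) - f(u)\,g'(u)}{g(u)^2}, \qquad g'(u) = (2q-2)\sqrt{\delta}|u|^{2q-3}\operatorname{sgn}(u),
\end{equation*}
and observe that since $2q-3 \geq 1$ the map $g'$ is continuous at $0$ with $g'(0)=0$, so the identity extends to $u=0$ by continuity.

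For the first bound, substituting $f = -c_f u^{2q-1} + f_0$ into the numerator, the two cross terms stemming from the superlinear part, namely $-c_f(2q-1)\sqrt{\delta}|u|^{4q-4}$ (from pairing $-c_f(2q-1)u^{2q-2}$ with $\sqrt{\delta}|u|^{2q-2}$) and $+c_f(2q-2)\sqrt{\delta}|u|^{4q-4}$ (from $u^{2q-1}\operatorname{sgn}(u)|u|^{2q-3} = |u|^{4q-4}$), telescope to $-c_f\sqrt{\delta}|u|^{4q-4}$. Applying $f'(u)\le L_f$ to the remaining $f'(u)$ factor and plugging in the polynomial bounds on $|f_0|$ and $|f_0'|$ yields
\begin{equation*}
f'(u)g(u) - f(u)g'(u) \le L_f - c_f\sqrt{\delta}|u|^{4q-4} + C\sqrt{\delta}\bigl(|u|^{2q-3} + |u|^{2q-2} + |u|^{4q-5}\bigr),
\end{equation*}
where $C$ depends only on $c_{f,0}, c_{f,1}, q$. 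Each residual exponent is strictly less than $4q-4$, so a standard Young-type inequality $|u|^k \le \eta\,|u|^{4q-4} + C_\eta$ with $\eta$ small absorbs all residuals into $\tfrac{c_f}{2}\sqrt{\delta}|u|^{4q-4}$, and division by $g(u)^2\ge 1$ gives $f_\delta'(u) \le \widetilde L_f$ for a $\delta$-independent constant.

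For the second bound, starting from $|f_\delta'(u)| \le |f'(u)|/g(u) + |f(u)||g'(u)|/g(u)^2$ and plugging in $|f'(u)| \le C(1+|u|^{2q-2})$, $|f(u)| \le C(1+|u|^{2q-1})$, $|g'(u)| \le (2q-2)\sqrt{\delta}|u|^{2q-3}$, the key elementary observation is that the two complementary lower bounds $g(u)\ge 1$ and $g(u) \ge \sqrt{\delta}|u|^{2q-2}$ together yield
\begin{equation*}
\frac{|u|^{2q-2}}{g(u)} \le |u|^{2q-2} \wedge \delta^{-1/2},
\end{equation*}
which produces the desired wedge structure. Mixed terms such as $\sqrt{\delta}|u|^{4q-4}/g(u)^2 = (\sqrt{\delta}|u|^{2q-2})|u|^{2q-2}/g(u)^2 \le |u|^{2q-2}/g(u)$ and the lower-order terms $\sqrt{\delta}|u|^{2q-3}/g(u)^2$, $\sqrt{\delta}|u|^{4q-5}/g(u)^2$ are reduced to the same canonical form by elementary polynomial manipulations and simple Young estimates, delivering the bound $|f_\delta'(u)| \le \widetilde l_f\bigl(1 + (|u|^{2q-2}\wedge\delta^{-1/2})\bigr)$.

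The main obstacle is the uniformity in $\delta$ of the first bound: estimating the two terms of $f_\delta'$ by absolute values separately gives a contribution $|f(u)||g'(u)|/g(u)^2$ of order $\delta^{-1/2}$ for $|u|$ of order $\delta^{-1/(2(2q-2))}$, violating uniformity. The bound survives only thanks to the exact algebraic cancellation between the $(2q-1)$ and $(2q-2)$ coefficients of the $\sqrt{\delta}|u|^{4q-4}$ terms generated by the leading $-c_f u^{2q-1}$, after which Young's inequality can absorb all sub-leading polynomials. Pinning down this cancellation cleanly and calibrating the Young parameters so as to retain a strictly negative residual quartic coefficient are the two delicate algebraic points in the proof.
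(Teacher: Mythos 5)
Your proposal is correct and takes essentially the same route as the paper: both differentiate $f_\delta$ by the quotient rule, use the sign of the leading term of $f'$ so that the $-c_f(2q-1)\sqrt{\delta}|u|^{4q-4}$ contribution from $f'(u)\bigl(1+\sqrt{\delta}|u|^{2q-2}\bigr)$ dominates the $+c_f(2q-2)\sqrt{\delta}|u|^{4q-4}$ contribution from the $f(u)$-times-derivative-of-the-denominator term (leaving the net $-c_f\sqrt{\delta}|u|^{4q-4}$ that absorbs all lower-order residuals), and obtain the wedge in \eqref{eq:|Df_delta|_V[TV4AC-25]} from the two complementary lower bounds $1+\sqrt{\delta}|u|^{2q-2}\ge 1$ and $1+\sqrt{\delta}|u|^{2q-2}\ge\sqrt{\delta}|u|^{2q-2}$. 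The only cosmetic difference is that the paper bounds the two resulting numerators directly by $\delta$-independent constants $C_1$ and $C_2$ (using that the negative leading monomials dominate), rather than phrasing the absorption as a Young inequality with a calibrated small parameter.
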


\begin{proof}

%
Denote $\mathrm{sgn}(u) := \mathbbm{1}_{\{ u \geq 0 \}} - \mathbbm{1}_{\{ u < 0 \}}$
for $u \in \R$.
It is straightforward to deduce
\begin{equation}\label{eq:derivative_f'_del[TV4AC-25]}
\begin{aligned}
f_\delta'(u)
&= 
\frac{
f'(u)
\big( 
1 + \sqrt{\delta} |u|^{2q-2} 
\big)
-
f(u)
(2q-2)
\sqrt{\delta} 
|u|^{2q-3} 
\mathrm{sgn}(u)
}{
( 1 + \sqrt{\delta} |u|^{2q-2} )^{2}
}.
\end{aligned}
\end{equation}
Owing to Assumption \ref{assump:F(Nonlinearity)[TV4AC-25]}, we have
\begin{equation}\label{eq:ineq._|f|[TV4AC-25]}
\begin{aligned}
f'(u) 
\leq -c_f
(2q-1)
u^{2q-2} + c_{f,1}(1 + |u|^{2q-3}),
\quad
|f(u)| \leq c_f |u|^{2q-1} + c_{f,0}(1 + |u|^{2q-2}),
\end{aligned}
\end{equation}
leading to 
\begin{equation}\label{eq:ineq._derivative_f'_del[TV4AC-25]}
\begin{aligned}
f_\delta'(u)
&\leq
 \frac{ \big( -c_f (2q-1) |u|^{2q-2} + c_{f,1}(1 + |u|^{2q-3} ) \big)
          (  
          1 
            + 
             \sqrt{\delta}
          |u|^{ 2q-2 } )
          }
    {(  1 
            + 
           \sqrt{\delta}
          |u|^{ 2q-2 } )^{2} }
\\&\quad
+
         \frac{ 
      \big( c_f|u|^{2q-1} +  c_{f,0}( 1 + |u|^{2q-2} ) \big)
      (2q-2) \sqrt{\delta}
      |u|^{ 2q-3 } 
          }
    { (  1 
            + 
            \sqrt{\delta}
          |u|^{ 2q-2 } )^{2} } \\
&= 
         \frac{ 
     -c_f (2q-1)|u|^{2q-2} 
     +
     c_{f,1}(1 + |u|^{2q-3} ) 
          }
    {  (  1 
            + 
            \sqrt{\delta}
          |u|^{ 2q-2 } )^{2} }
\\
&\quad
+    
\frac{ 
    -c_f
     \sqrt{\delta}
     |u|^{ 4q-4 }
     +
      c_{f,1}
       \sqrt{\delta}
      (1 + |u|^{2q-3} ) 
          |u|^{ 2q-2 } 
        +
      c_{f,0}
       \sqrt{\delta}
      ( 1 + |u|^{2q-2} )
      (2q-2)
      |u|^{ 2q-3 } 
          }
    { (  1 
            + 
              \sqrt{\delta}
          |u|^{ 2q-2 } )^{2} }.
\end{aligned}
\end{equation}
Before moving on, we know that
for all $u \in \R$, there exist constants
$C_1, C_2 >0$
independent of $ \delta$
while depending on $c_{f,1}, c_{f,0},q$ such that
\begin{equation}
\begin{aligned}
-
c_f (2q-1)
|u|^{2q-2} 
+
c_{f,1}(1 + |u|^{2q-3}) &\leq C_{1},
\\
-c_f
     |u|^{ 4q-4 }
     +
      c_{f,1}(1 + |u|^{2q-3} ) 
          |u|^{ 2q-2 } 
        +
      c_{f,0}( 1 + |u|^{2q-2} )
      &(2q-2)
      |u|^{ 2q-3 } 
      \leq C_{2}.
\end{aligned}
\end{equation}
Thus,
for any $u \in \R$ and $\delta >0$,
\begin{equation}
\begin{aligned}
f_\delta'(u)
\leq
\tfrac{ 
      C_{1}
          }
    {(  1 
            +    
            \sqrt{\delta}
          |u|^{ 2q-2 }
    )^{2} 
    }
+   \tfrac{ 
      C_{2}
      \sqrt{\delta}
          }
    {(  1 
            + 
            \sqrt{\delta}
          |u|^{ 2q-2 } 
    )^{2}
    }
\leq
C_1 + C_2,
\end{aligned}
\end{equation}
as asserted.
%
Moreover,
Assumption \ref{assump:F(Nonlinearity)[TV4AC-25]} 
ensures
\begin{equation}
    |f'(u)|
    \leq
    c_f
    (2q-1)
    |u|^{2q-2}
    +
    c_{f,1}
    ( 
    1 
    +
    |u|^{2q-3}
    ),
\end{equation}
which together with \eqref{eq:derivative_f'_del[TV4AC-25]}-\eqref{eq:ineq._|f|[TV4AC-25]}
implies
 for any $u \in \R$,
\begin{equation}\label{eq:|Df_delta|_V-derive[TV4AC-25]}
\begin{aligned}
|f'_\delta(u)|   
    &\leq
         \tfrac{ 
     C (
     1
     +
     |u|^{2q-2} 
     +
      \sqrt{\delta}
    |u|^{ 4q-4 }
    )
          }
    {(  1 
            + 
              \sqrt{\delta}
          |u|^{ 2q-2 } )^{2} }.
\end{aligned}    
\end{equation}
Hence,
it holds that for any $u \in \R$,
\begin{equation}
\begin{aligned}
|f'_\delta(u)|   
    \leq
     C (
     1
     +
     |u|^{2q-2} 
    )
 +
        \tfrac{ 
        C   \sqrt{\delta}
    |u|^{ 4q-4 } 
          }
    { 1 
            + 
              \sqrt{\delta}
          |u|^{ 2q-2 }  }   
\leq
    C
    (
     1
     +
     |u|^{2q-2} 
    ),
\end{aligned}    
\end{equation}
and
\begin{equation}
|f'_\delta(u)|   
\leq
    C
    \Big(
    1
    +
    \delta^{ - \frac12 }
    +
    \tfrac{ 
    |u|^{ 2q-2 }
          }
    { 
       1 
       + 
         \sqrt{\delta}
        |u|^{ 2q-2 } 
    }
    \Big)
\leq
C 
(
1
+
\delta^{ - \frac12 }
).
\end{equation}
%
%
%
The desired result \eqref{eq:|Df_delta|_V[TV4AC-25]} then follows immediately.
The proof is thus finished.
\end{proof}

Since $f_\delta$ in \eqref{eq:f_delta[TV4AC-25]} is a particular case of the modification $f_\tau$ in \eqref{eq:f_tau[TV4AC-25]} with $\alpha=\beta=1$ and $\theta=\tfrac12$,
the coercivity-type property \eqref{eq:(u+v)f_tau(u)[TV4AC-25]} still holds for $f_\delta$ under the condition $2 c_3^2 \sqrt{\delta} \leq c_0 \epsilon$.
Consequently, $\mathbb{X}^\delta$ admits the following uniform moment bounds.

\begin{lem}\label{lem:X^delta-Bound[TV4AC-25]}
       Let Assumptions \ref{assump:A(linear_operator)[TV4AC-25]}-\ref{assump:X_0(Initial Value)[TV4AC-25]}
       hold and
     let $ \mathbb{X}^\delta(t), t \geq 0$ be defined by \eqref{eq:spde_auxiliary[TV4AC-25]}.
     For any $p \geq 1$ and any integer $\rho \geq 1$,
     there exist constants $  C( p, q, \rho
     ) $ and $C(p,q)$ independent of $\delta$ such that
\begin{align}
\label{eq:X^delta-L^2rho-bound[TV4AC25]}
&
\sup_{t \geq 0}
\big\|
 \mathbb{X}^{\delta}(t) 
\big\|_{L^p( \Omega; L^{2\rho} )}
\leq
\| \mathbb{X}^{\delta}_0\|_{ L^p(\Omega; L^{2\rho} ) }
+
C(p,q, \rho)
,
\\
\label{eq:X^delta-V-bound[TV4AC25]}
&
\sup_{t \geq 0}
\big\|
\mathbb{X}^{\delta} (t) 
\big\|_{L^p(\Omega;V)}
\leq
    \|\mathbb{X}^{\delta}_0 \|_{L^p(\Omega;V)}
    +
    C(p,q)
    \epsilon^{-1}
    \Big( 1 + \| \mathbb{X}^{\delta}_0\|^{2q-1}_{ L^{(2q-1)p}(\Omega; L^{4q-2} ) } \Big).
\end{align}
\end{lem}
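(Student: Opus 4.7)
The plan is to leverage the fact that $f_\delta$ is a special case of the modification $f_\tau$ with $\alpha=\beta=1$ and $\theta=\tfrac12$, so that all the estimates of Proposition \ref{prop:F_tau[TV4AC-25]} carry over with $\tau$ replaced by $\delta$. In particular, under the condition $2c_3^2\sqrt{\delta}\leq c_0\epsilon$, setting $Y^\delta(t) := \mathbb{X}^\delta(t) - \mathcal{O}_t$, $u=\mathbb{X}^\delta$ and $v=-\mathcal{O}_t$, one obtains the pointwise coercivity
\begin{equation*}
2Y^\delta(x) f_\delta(\mathbb{X}^\delta(x)) + \delta\epsilon^{-1}|f_\delta(\mathbb{X}^\delta(x))|^2 \leq -\tfrac{c_0}{2(1+\sqrt{\delta})^2}|\mathbb{X}^\delta(x)|^2 + \widetilde{c}_1(1+|\mathcal{O}_t(x)|^{2q}) + \widetilde{c}_2\sqrt{\delta},
\end{equation*}
together with the pointwise growth bound $|f_\delta(u)|\leq |f(u)|\leq c_3|u|^{2q-1}+c_4|u|+c_5$, all with constants independent of $\delta$.

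For the $L^{2\rho}$ bound \eqref{eq:X^delta-L^2rho-bound[TV4AC25]}, the process $Y^\delta$ satisfies the random PDE $\dot Y^\delta + AY^\delta = \epsilon^{-1}F_\delta(Y^\delta+\mathcal{O})$ with pathwise smooth-in-time trajectories, so the chain rule applies. Differentiating $\tfrac{1}{2\rho}\|Y^\delta\|^{2\rho}_{L^{2\rho}}$ and integrating by parts produces a nonnegative Laplacian contribution $(2\rho-1)\int |Y^\delta|^{2\rho-2}|\nabla Y^\delta|^2\,dx$ that I would discard. Multiplying the pointwise coercivity above by $|Y^\delta|^{2\rho-2}/2$, using $|\mathbb{X}^\delta|^2 \geq \tfrac12|Y^\delta|^2 - |\mathcal{O}_t|^2$, and then applying Young's inequality to absorb the $|Y^\delta|^{2\rho-2}(1+|\mathcal{O}_t|^{2q})$ terms into a small fraction of the dissipation would yield
\begin{equation*}
\tfrac{d}{dt}\|Y^\delta\|^{2\rho}_{L^{2\rho}} + c\epsilon^{-1}\|Y^\delta\|^{2\rho}_{L^{2\rho}} \leq C\epsilon^{-1}\bigl(1+\|\mathcal{O}_t\|^{2q\rho}_V\bigr),
\end{equation*}
with constants independent of $\delta$ and $\epsilon$. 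A Gronwall argument then gives a bound uniform in both $t$ and $\epsilon$ since $\int_0^t\epsilon^{-1}e^{-c(t-s)/\epsilon}ds\leq c^{-1}$; taking $L^p(\Omega)$-norms and invoking Lemma \ref{lem:O_bound[TV4AC-25]} for the stochastic convolution delivers \eqref{eq:X^delta-L^2rho-bound[TV4AC25]}.

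For the $V$-estimate \eqref{eq:X^delta-V-bound[TV4AC25]}, I would return to the mild form \eqref{eq:spde_auxiliary[TV4AC-25]} and apply the smoothing bound $\|E(t-s)h\|_V \leq C(1\wedge(t-s))^{-1/4}e^{-c(t-s)}\|h\|$ from \eqref{eq:E(t)_semigroup_property_V[TV4AC-25]}, together with $\|F_\delta(\mathbb{X}^\delta(s))\|\leq C(1+\|\mathbb{X}^\delta(s)\|^{2q-1}_{L^{4q-2}})$. This yields
\begin{equation*}
\|\mathbb{X}^\delta(t)\|_V \leq \|\mathbb{X}^\delta_0\|_V + C\epsilon^{-1}\!\!\int_0^t\! (1\wedge(t-s))^{-1/4}e^{-c(t-s)}\bigl(1+\|\mathbb{X}^\delta(s)\|^{2q-1}_{L^{4q-2}}\bigr)ds + \|\mathcal{O}_t\|_V,
\end{equation*}
and the time integral is uniformly bounded in $t$. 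Taking $L^p(\Omega)$-norms, applying the $L^{4q-2}$-bound obtained in the previous step (choose $\rho=2q-1$), and using Lemma \ref{lem:O_bound[TV4AC-25]} finishes the $V$-estimate.

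The main obstacle I anticipate is the transfer of the $L^2$-type coercivity supplied by Proposition \ref{prop:F_tau[TV4AC-25]} to the $L^{2\rho}$ setting without inflating the constants by $\delta^{-1}$ or $\epsilon^{-1}$. Working pointwise before integration over $\mathcal{D}$ and carefully calibrating Young's inequality is essential, so that the right-hand side retains only a polynomial $\|\mathcal{O}_t\|^{2q\rho}_V$ (uniformly bounded in time via Lemma \ref{lem:O_bound[TV4AC-25]}) plus an additive constant, preserving the $\delta$- and $\epsilon$-independent structure of the claimed bound.
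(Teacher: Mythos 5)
Your proposal is correct and follows essentially the same route as the paper: the same shift $\mathbb{Y}^\delta=\mathbb{X}^\delta-\mathcal{O}_t$, the same pointwise coercivity inherited from Proposition \ref{prop:F_tau[TV4AC-25]} with $u=\mathbb{X}^\delta$, $v=-\mathcal{O}_t$, discarding the nonnegative gradient term, Young's inequality to isolate the $\|\mathbb{Y}^\delta\|^{2\rho}_{L^{2\rho}}$ dissipation, Gronwall, and then the mild-form smoothing argument for the $V$-bound. The only cosmetic difference is that you split $-|\mathbb{X}^\delta|^2\le-\tfrac12|\mathbb{Y}^\delta|^2+|\mathcal{O}_t|^2$ directly where the paper applies Young's inequality to the cross term, which is the same estimate.
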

\begin{proof}
  Letting $\mathbb{Y}^{\delta}(t) := \mathbb{X}^{\delta}(t) - \mathcal{O}_t, t\geq 0$, one shows
\begin{equation}
    \frac{ \partial \mathbb{Y}^{\delta}(t) }{ \partial t }
    =
     - A \mathbb{Y}^{\delta}(t) + \epsilon^{-1} F_\delta\big( \mathbb{Y}^{\delta}(t) + \mathcal{O}_t \big)  ,
     \
     t >0,
    \quad
     \mathbb{Y}^{\delta}(0)=\mathbb{X}^{\delta}_0.
\end{equation}
Applying integration by parts 
and 
Assumption \ref{assump:A(linear_operator)[TV4AC-25]},
we deduce that, for all $t >0$,
\begin{equation}
    \begin{aligned}
        \frac{ 
        \mathrm{d}
        \|\mathbb{Y}^{\delta}(t)\|^{2\rho}
        _{ L^{2\rho} }
        }
        { \mathrm{d} t }
   &=
         2\rho
        \int_{\mathcal{D}}
        \big( 
        \mathbb{Y}^{\delta}(t)(x) 
        \big)^{2\rho-1}
        \frac{ 
        \partial  \mathbb{Y}^{\delta}(t)(x)
        }
        { \partial t }
        \,\mathrm{d}x   
\\
   &=
        2\rho 
        \int_{\mathcal{D}}
        ( \mathbb{Y}^{\delta}(t)(x) )^{2\rho-1}
        \cdot
        \big( 
        -A 
        \mathbb{Y}^{\delta}(t)(x) 
        +
        \epsilon^{-1} 
        f_\delta( \mathbb{Y}^{\delta}(t)(x) + \mathcal{O}_t(x) ) 
        \big)
        \,\mathrm{d}x         
\\
   &=
       - 2\rho
       \langle
        ( \mathbb{Y}^{\delta}(t) )^{2\rho-1}
        ,
         A \mathbb{Y}^{\delta}(t)
       \rangle   
\\&\quad
       +
       2\rho 
       \int_{\mathcal{D}}
       ( \mathbb{Y}^{\delta}(t)(x) )^{2\rho-2}
        \cdot
        \epsilon^{-1}
      ( \mathbb{Y}^{\delta}(t)(x) )
      f_\delta( \mathbb{Y}^{\delta}(t)(x) + \mathcal{O}_t(x) 
      )
    \,\mathrm{d}x 
\\
   &\leq
       - 2\rho(2\rho-1)
       \langle
        ( \mathbb{Y}^{\delta}(t)  )^{2\rho-2}
        \nabla( \mathbb{Y}^{\delta}(t) )
        ,
        \nabla( \mathbb{Y}^{\delta}(t)  )
       \rangle 
\\&\
       +
        \rho
        \epsilon^{-1}
        \hspace{-0.3em}
       \int_{\mathcal{D}}
       ( \mathbb{Y}^{\delta}(t)(x) )^{2\rho-2}
        \big(
            - 
            2
            \widetilde{c}_{0,\delta} 
            | \mathbb{Y}^{\delta}(t)(x) + \mathcal{O}_t(x) |^2
            +
            \widetilde{c}_{1}
            ( 1 + |\mathcal{O}_t(x) |^{2q} )
            +
            \widetilde{c}_2
            \sqrt{\delta}
        \big)
    \,\mathrm{d}x,
\end{aligned}
\end{equation}
where the last inequality follows from the coercivity estimate for $f_\delta$ under condition $2 c_3^2 \sqrt{\delta} \leq c_0 \epsilon$:
\begin{equation}
    2 ( u + v ) f_{\delta}(u) 
    \leq
    -
    2
    \widetilde{c}_{0,\delta}
    |u|^2
    +
     \widetilde{c}_1
     ( 1 + |v|^{2q} )
     +
     \widetilde{c}_2
      \sqrt{\delta}
     ,
     \quad
     \text{for all }
     u,v \in \R,
\end{equation}
with 
 $\widetilde{c}_{0,\delta} := \tfrac{c_0}{4(1 +\sqrt{\delta})^2}$.
Since
$\langle
        ( \mathbb{Y}^{\delta}(t)  )^{2\rho-2}
        \nabla( \mathbb{Y}^{\delta}(t) )
        ,
        \nabla( \mathbb{Y}^{\delta}(t)  )
       \rangle \geq 0$,
       one further utilizes
the Young inequality 
\begin{equation}
  -4
  \widetilde{c}_{0,\delta}
  \mathbb{Y}^{\delta}(t)(x)
  \mathcal{O}_{t}(x)
\leq
 \widetilde{c}_{0,\delta}
  |\mathbb{Y}^{\delta}(t)(x)|^2 
+ 
  4
  \widetilde{c}_{0,\delta}
  |\mathcal{O}_t(x)|^2 
\end{equation}
to obtain
\begin{equation}
    \begin{aligned}
        \frac{ 
        \mathrm{d}
        \|\mathbb{Y}^{\delta}(t)\|^{2\rho}
        _{ L^{2\rho} }
        }
        { \mathrm{d} t } 
&\leq
        \rho 
        \epsilon^{-1}
        \hspace{-0.3em}
       \int_{\mathcal{D}}
       ( \mathbb{Y}^{\delta}(t)(x) )^{2\rho-2}
        \big(
            - 
            \widetilde{c}_{0,\delta}
            |\mathbb{Y}^{\delta}(t)(x)|^2
            +
             2
            \widetilde{c}_{0,\delta}
            |\mathcal{O}_t(x) |^2
            +
            \widetilde{c}_{1}
            ( 1 + |\mathcal{O}_t(x) |^{2q} )
            +
            \widetilde{c}_2
            \sqrt{\delta}
        \big)
    \,\mathrm{d}x 
\\
    &\leq    
    \int_{\mathcal{D}}
    - 
    \tfrac
    { \rho 
    \widetilde{c}_{0,\delta}
      \epsilon^{-1} }
    { 2 }
       ( \mathbb{Y}^{\delta}(t)(x) )^{2\rho}
     +
     C
     \epsilon^{-1} 
     \big(
     1 
     + 
     |\mathcal{O}_t(x) |^{2q\rho}
     +
     \delta^{ \frac{\rho}{2} }
     \big) 
    \,\mathrm{d}x 
\\
&= 
-   
\tfrac
    { \rho 
    \widetilde{c}_{0,\delta}
      \epsilon^{-1} }
    { 2 }
      \|\mathbb{Y}^{\delta}(t)\|^{2\rho}_{L^{2\rho}}
+ C \epsilon^{-1} 
\Big( 
1
+
\|\mathcal{O}_t\|^{2q\rho}_{V } 
     +
\delta^{ \frac{\rho}{2} }
\Big),
    \end{aligned}   
\end{equation}
where the Young inequality was also employed to get the second inequality.
Multiplying both sides of the above inequality by
$e^{ 
    \frac
    { \rho 
    \widetilde{c}_{0,\delta}
      \epsilon^{-1} t }
    { 2 }
}$ implies
\begin{equation}
\begin{aligned}
\frac{  \mathrm{d}
       \big(
       e^{ 
           \frac
    { \rho 
    \widetilde{c}_{0,\delta}
      \epsilon^{-1} t }
    { 2 }
       }
       \|\mathbb{Y}^{\delta}(t)\|^{2\rho}_{ L^{2\rho} }
       \big)
     }
     { \mathrm{d} t }
&\leq 
C 
       e^{ 
           \frac
    { \rho 
    \widetilde{c}_{0,\delta}
      \epsilon^{-1} t }
    { 2 }
       }
\epsilon^{-1} 
\Big( 
1 
+  
\|\mathcal{O}_t\|^{2q\rho}_{V}
+
\delta^{ \frac{\rho}{2} }
\Big).
\end{aligned}
\end{equation}
The Gronwall inequality then leads to
\begin{equation}
\begin{aligned}
       e^{ 
           \frac
    { \rho 
    \widetilde{c}_{0,\delta}
      \epsilon^{-1} t }
    { 2 }
       }
  \|\mathbb{Y}^{\delta}(t)\|^{2\rho}_{ L^{2\rho} }
&\leq
  \|\mathbb{Y}^{\delta}(0)\|^{2\rho}_{ L^{2\rho} }
  +
  C 
  \epsilon^{-1} 
  \int_0^t
       e^{ 
           \frac
    { \rho 
    \widetilde{c}_{0,\delta}
      \epsilon^{-1} s }
    { 2 }
       }
  \Big( 
  1 
  +  
  \sup_{s \geq 0 }
  \|\mathcal{O}_s\|^{2q\rho}_{V}
  +
\delta^{ \frac{\rho}{2} }
  \Big)
  \,\mathrm{d}s 
\\
&
=
  \|\mathbb{X}^{\delta}_0\|^{2\rho}_{ L^{2\rho} }
  +
  \tfrac{ 2 C }{\rho \widetilde{c}_{0,\delta} }
  \Big(
       e^{ 
           \frac
    { \rho 
    \widetilde{c}_{0,\delta}
      \epsilon^{-1} t }
    { 2 }
       }
  -
  1 
  \Big)
  \Big( 
  1 
  +  
 \sup_{s \geq 0 }
  \|\mathcal{O}_s\|^{2q\rho}_{V}
  +
\delta^{ \frac{\rho}{2} }
  \Big).
\end{aligned}
\end{equation}
Therefore, we get
\begin{equation}
\begin{aligned}
     \|\mathbb{Y}^{\delta}(t)\|_{ L^p(\Omega; L^{2\rho} ) }
    & \leq
     e^{
     \frac{
     -\widetilde{c}_{0,\delta}
     \epsilon^{-1} 
     t
     }
     {4}
     }
     \| \mathbb{X}^{\delta}_0\|_{ L^p(\Omega; L^{2\rho} ) }
     +
     C 
     (1 + \sqrt{\delta})^{ \frac{1}{\rho} }
     \Big(
     1
     + 
     \sup_{s \geq 0 }
     \|\mathcal{O}_s\|^q_{L^p(\Omega; V )}
     +
    \delta^{\frac{1}{4} }
     \Big)
\\
&\leq
e^{
\frac{
-
\widetilde{c}_{0,\delta}
\epsilon^{-1}
t
}
{4}
}
\| \mathbb{X}^{\delta}_0\|_{ L^p(\Omega; L^{2\rho} ) }
+
C(p,q, \rho ) ,
\end{aligned}
\end{equation}
where we recalled $\widetilde{c}_{0,\delta} := \tfrac{c_0}{4(1 +\sqrt{\delta})^2}$.
This together with Lemma \ref{lem:O_bound[TV4AC-25]} immediately gives \eqref{eq:X^delta-L^2rho-bound[TV4AC25]}.
Moreover, one uses \eqref{eq:E(t)_semigroup_property_V[TV4AC-25]}
%
to arrive at 
\begin{equation}\label{eq:deduce_X^delta_V_norm[TV4AC-25]}
\begin{aligned}
  &
  \sup_{t \geq 0}
    \| \mathbb{X}^{\delta} (t) \|_{L^p(\Omega;V)}
   \leq
   \sup_{t \geq 0}
   \bigg[
    \| E(t) \mathbb{X}^{\delta}_0 \|_{L^p(\Omega;V)}
    +
    \bigg\| 
    \int_0^t 
    E(t-s) 
    \epsilon^{-1}
    F_\delta(  \mathbb{X}^{\delta}(s) )
    \,\mathrm{d}s
    \bigg\|_{L^p(\Omega;V)}
    +
    \| 
   \mathcal{O}_t
    \|_{L^p(\Omega;V)}
    \bigg]
\\
   & \leq
    \| \mathbb{X}^{\delta}_0 \|_{L^p(\Omega;V)}
    +
    C \epsilon^{-1}
    \sup_{t \geq 0}
    \int_0^t
    e^{ 
    -
    \frac12
    \lambda_1
    (t-s)
    }
    (t-s)^{-\frac14}
   \Big( 1 
   +
     \|\mathbb{X}^{\delta}(s)\|^{2q-1}_{L^{(2q-1)p}(\Omega;L^{4q-2})} 
   \Big)
    \,
     \mathrm{d}s
    +
    C(p)
\\
   & \leq
    \|\mathbb{X}^{\delta}_0 \|_{L^p(\Omega;V)}
    +
    C(p,q)
    \epsilon^{-1}
    \Big( 1 + \| \mathbb{X}^{\delta}_0\|^{2q-1}_{ L^{(2q-1)p}(\Omega; L^{4q-2} ) } \Big),
\end{aligned}
\end{equation}
where we
used the  fact that
$
    \int_0^t
    e^{ 
    -
    \frac12
    \lambda_1
    (t-s)
    }
    (t-s)^{-\frac14}
     \,
     \mathrm{d}s
$ 
is uniform-in-time bounded.
The proof is thus finished.
\end{proof}

\subsection{The Kolmogorov equation and the regularity estimates}

To carry out the weak error analysis,
for all $\varphi \in \mathcal{C}^0_b(H)$, let us introduce the functions $\nu^\delta(t, x)$  with $t \geq 0, x \in H$, defined by 
\begin{equation}\label{eq:markov_nu[TV4AC-25]}
    \nu^\delta(t, x) := \mathbb E[\varphi (\mathbb{X}^{\delta}(t,x))],
\end{equation}
where $\mathbb{X}^{\delta}(t,x)$ solves \eqref{eq:spde_auxiliary[TV4AC-25]} with initial value $x$.
Since 
$F_\delta \colon H \to H$ is globally Lipschitz continuous with Lipschitz constant depending on $\delta$,
as indicated by \eqref{eq:|Df_delta|_V[TV4AC-25]},
it then follows from \cite[Section 2]{BREHIER2018Kolmogorov} that  $\nu^\delta$  is the unique classical solution of the Kolmogorov equation satisfying
\begin{equation}\label{eq:Kolmogorov.nu[TV4AC-25]}
    \left\{ \begin{array}{l}
         \partial_t \nu^\delta(t,x) =  D \nu^\delta(t,x). (-Ax + \epsilon^{-1} F_\delta(x)) + \frac{1}{2} \sum_{j \in \mathbb N}D^2 \nu^\delta(t, x).(e_j, e_j), \\
         \nu^\delta(0, \cdot) = \varphi(\cdot), 
    \end{array}\right.
\end{equation}
with $t \geq 0, x\in H$.
%
%
%
%
In what follows,
we first begin with a rather rough estimate for the derivatives of $\nu^\delta$, and later 
sharpen
it to the desired estimate that
reduces the dependence on $\epsilon^{-1}$ from exponential to polynomial order
(as shown in Proposition
\ref{prop:markov_exponential_estimate_pol[TV4AC-25]}).

\begin{lem}
\label{lem:markov_exponential_estimate_exp[TV4AC-25]}
Under Assumptions \ref{assump:A(linear_operator)[TV4AC-25]}--\ref{assump:X_0(Initial Value)[TV4AC-25]},  
let $\varphi \in \mathcal{C}_b^0(H)$ and $\nu^\delta(t,x)$ be given by \eqref{eq:markov_nu[TV4AC-25]}.  
Then, for all $t>0$ and $x,y \in H$, 
\begin{equation}
\begin{aligned}
    |D \nu^\delta(t, x).y| 
   &\leq 
    e^{
    \epsilon^{-1} 
    \widetilde{L}_f
    t}
     t^{-\frac{1}{2}}
     \| \varphi \|_0 
     \|y\|  .
\end{aligned}
\end{equation}

\end{lem}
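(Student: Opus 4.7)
The plan is to apply the Bismut--Elworthy--Li (BEL) formula to transfer the differentiation from the non-smooth test function $\varphi$ to an It\^{o} integral, and then to bound the first variation process $\eta^{y,x}(s):=D\mathbb{X}^\delta(s,x).y$ using a one-sided Lipschitz estimate on $f_\delta$.

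First I would introduce the first variation process $\eta^{y,x}(t)$ which, by differentiating the mild equation \eqref{eq:spde_auxiliary[TV4AC-25]}, satisfies the deterministic (pathwise) linear evolution equation
\begin{equation*}
\tfrac{\mathrm{d}}{\mathrm{d}t}\eta^{y,x}(t)
= -A\eta^{y,x}(t) + \epsilon^{-1}F_\delta'(\mathbb{X}^\delta(t,x))\eta^{y,x}(t),
\qquad \eta^{y,x}(0)=y.
\end{equation*}
Taking the inner product with $\eta^{y,x}(t)$, using the dissipativity $\langle A\eta,\eta\rangle\geq 0$, the Nemytskii structure $\big(F_\delta'(u)(\psi)\big)(\xi)=f_\delta'(u(\xi))\psi(\xi)$ from \eqref{eq:def_DF_delta[TV4AC-25]}, and the key one-sided bound $f_\delta'(v)\leq \widetilde{L}_f$ from Lemma \ref{lem:f'_delta[TV4AC-25]}, I obtain
\begin{equation*}
\tfrac{1}{2}\tfrac{\mathrm{d}}{\mathrm{d}t}\|\eta^{y,x}(t)\|^2
\leq \epsilon^{-1}\widetilde{L}_f\|\eta^{y,x}(t)\|^2,
\end{equation*}
so Gronwall yields $\|\eta^{y,x}(t)\|\leq e^{\epsilon^{-1}\widetilde{L}_f t}\|y\|$ pathwise.

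Next, since $F_\delta$ is globally Lipschitz on $H$ (again by Lemma \ref{lem:f'_delta[TV4AC-25]}) and the cylindrical $I$-Wiener noise is non-degenerate with identity covariance, the BEL formula (see, e.g., the framework of \cite{BREHIER2018Kolmogorov}) gives the representation
\begin{equation*}
D\nu^\delta(t,x).y
= \tfrac{1}{t}\,\mathbb{E}\!\left[\varphi(\mathbb{X}^\delta(t,x))\int_0^t \langle \eta^{y,x}(s),\mathrm{d}W(s)\rangle\right].
\end{equation*}
Applying Cauchy--Schwarz in $\omega$, the bound $|\varphi|\leq \|\varphi\|_0$, and the It\^{o} isometry leads to
\begin{equation*}
|D\nu^\delta(t,x).y|
\leq \tfrac{\|\varphi\|_0}{t}\Big(\mathbb{E}\!\int_0^t\|\eta^{y,x}(s)\|^2\,\mathrm{d}s\Big)^{1/2}
\leq \tfrac{\|\varphi\|_0}{t}\Big(\int_0^t e^{2\epsilon^{-1}\widetilde{L}_f s}\,\mathrm{d}s\Big)^{1/2}\|y\|
\leq t^{-1/2}\,e^{\epsilon^{-1}\widetilde{L}_f t}\,\|\varphi\|_0\,\|y\|,
\end{equation*}
where in the last step I used $\int_0^t e^{2as}\mathrm{d}s\leq t\,e^{2at}$ for $a>0$.

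The main obstacle is really conceptual rather than technical: because $\varphi$ is only bounded and continuous (reflecting the non-smoothness inherent to TV distance), classical differentiation under the expectation is unavailable, which is precisely why the BEL representation is essential. The remaining analytic work is the Gronwall estimate, where the crucial point is that only the one-sided upper bound $f_\delta'\leq \widetilde{L}_f$ is needed --- and this bound is $\delta$-independent, so the resulting constant in the exponential is uniform in $\delta$. The exponential blow-up in $\epsilon^{-1}t$ is unavoidable at this rough level; sharpening it to polynomial dependence will be done later in Proposition~\ref{prop:markov_exponential_estimate_pol[TV4AC-25]} by combining the present bound with ergodicity of \eqref{eq:spde_auxiliary[TV4AC-25]}.
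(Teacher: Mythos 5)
Your proposal is correct and follows essentially the same route as the paper's proof: the Bismut--Elworthy--Li representation, the pathwise Gronwall bound $\|\xi^\delta_y(t,x)\|\le e^{\epsilon^{-1}\widetilde{L}_f t}\|y\|$ for the first variation process via the one-sided bound $f_\delta'\le\widetilde{L}_f$, and then Cauchy--Schwarz with the It\^o isometry. The only cosmetic difference is that you make the final step $\int_0^t e^{2as}\,\mathrm{d}s\le t\,e^{2at}$ explicit, which the paper leaves implicit.
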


\begin{proof}

Recall that
$\nu^\delta(0, \cdot) = \varphi(\cdot)$ 
with $ \varphi \in \mathcal{C}^0_b(H)$.
Then the Bismut-Elworthy-Li formula (see \cite{cerrai2001second} for instance) yields
\begin{equation}\label{lem_D_of_C0}
    D \nu^\delta(t, x). y = t^{-1} \mathbb E \left[ \varphi(\mathbb{X}^{\delta}(t,x)) \int_0^t \langle  \xi^\delta_y(s,x), \mathrm{d}W(s) \rangle \right],
\end{equation}
with 
$ \xi^\delta_y(t,x), t\geq 0$,
given by 
\begin{equation}\label{eq:xi_def[TV4AC-25]}
\left\{\begin{array}{l}
\mathrm{d} 
\xi^\delta_y(t,x) 
= 
-A 
 \xi^\delta_y(t,x)
 \,\mathrm{d} t
 +
 \epsilon^{-1} 
F_\delta'(\mathbb{X}^{\delta}(t,x))
 \xi^\delta_y(t,x)
 \,\mathrm{d} t 
 ,
 \quad
 t > 0, 
 \\
 \xi^\delta_y(0,x)
 =
 y .
\end{array}\right.
\end{equation}
%
In view of \eqref{eq:def_DF_delta[TV4AC-25]} and \eqref{eq:bound_f'_delta[TV4AC-25]}, we obtain
\begin{equation}
    \langle
    F_\delta'(x) y 
    ,
    y
    \rangle
    \leq 
    \widetilde{L}_f 
    \|y\|^2
    ,
    \quad 
    \forall 
    x, y \in H.
\end{equation}
Then it follows from Assumption \ref{assump:A(linear_operator)[TV4AC-25]} that
\begin{equation}\label{eq:xi_y(deduce)[TV4AC-25]}
\begin{split}
       \frac{
       \mathrm{d}
       \|\xi^\delta_y(t,x)\|^2 
       }
       {\mathrm{d} t}
      &=
       2 
       \Big\langle 
       \frac{\mathrm{d} \xi^\delta_y(t,x)}
       {\mathrm{d} t } 
       , 
       \xi^\delta_y(t,x) 
       \Big\rangle
\\
       &=
       2  \langle -A \xi^\delta_y(t,x), \xi^\delta_y(t,x) \rangle 
       + 
       2 \epsilon^{-1} 
       \langle 
       F_\delta' (\mathbb{X}^{\delta}(t,x)) \xi^\delta_y(t,x)
       ,
       \xi^\delta_y(t,x) \rangle \\
       &\leq
       2 \epsilon^{-1} \widetilde{L}_f 
       \|\xi^\delta_y(t,x)\|^2 .
\end{split}
\end{equation}
The Gronwall inequality further yields
\begin{equation}\label{eq:bound_xi_|y|[TV4AC-25]}
\| \xi^\delta_y(t,x)\|
\leq 
e^{\epsilon^{-1} 
\widetilde{L}_f
t
}
\|y\|.   
\end{equation}
%
%
Using the Hölder inequality, the It\^o isometry and \eqref{eq:bound_xi_|y|[TV4AC-25]} shows
\begin{equation}\begin{split}
    |D \nu^\delta(t, x). y| 
    &\leq
    t^{-1} \| \varphi \|_0 \mathbb E \bigg[\bigg|  \int_0^t \langle  \xi^\delta_y(s,x), \mathrm{d} W(s) \rangle \bigg| \bigg] \\
    &\leq
    t^{-1} \| \varphi \|_0 \left( \int_0^t \mathbb E [ \| \xi^\delta_y(s,x)\|^2 ] \,\mathrm{d}s \right)^{\frac{1}{2}}\\
    &\leq
    e^{
    \epsilon^{-1} 
    \widetilde{L}_f
    t
    }
     t^{-\frac{1}{2}}
     \| \varphi \|_0 
     \|y\|,
    \end{split}
\end{equation}
for all $t >0$ and all $x, y \in H$.
The proof is thus completed.
\end{proof}

Bearing the above lemma in mind,
one obtains the following result that demonstrates
the error between the original problem
$X$ 
and the auxiliary process
$\mathbb{X}^{\delta}$.

\begin{prop}\label{prop:error_of_X^delta-X[TV4AC-25]}

Under Assumptions \ref{assump:A(linear_operator)[TV4AC-25]}-\ref{assump:X_0(Initial Value)[TV4AC-25]}, let $X(t, X_0), t\geq 0$ be the mild solution of the model \eqref{eq:spde[TV4AC-25]} and $\mathbb{X}^{\delta}(t, X_0), t\geq 0$ be the auxiliary process defined by \eqref{eq:spde_auxiliary[TV4AC-25]} with the initial value $\mathbb{X}^{\delta}_0 = X_0$. 
Then
for all $\varphi \in \mathcal{B}_b(H)$,
\begin{equation}
    \Big| \mathbb{E} \big[\varphi \big(X(t, X_0) \big) \big] - \mathbb{E} \big[\varphi(\mathbb{X}^{\delta}(t, X_0)) \big] \Big| 
\leq
    C
 \epsilon^{-1}
  e^{
  \epsilon^{-1} 
  \widetilde{L}_f
  t
  } 
 t^{\frac12}
 \| \varphi \|_0
\Big(
1 
+
\|X_0\|_V^{  4q-3 }
\Big)
\sqrt{\delta}.
\end{equation}
\end{prop}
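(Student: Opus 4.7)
The plan is to interpret the difference $\big|\mathbb{E}[\varphi(X(t,X_0))] - \mathbb{E}[\varphi(\mathbb{X}^{\delta}(t,X_0))]\big|$ as a weak error attributable to replacing $F$ by $F_\delta$, and to exploit the Kolmogorov equation \eqref{eq:Kolmogorov.nu[TV4AC-25]} for $\mathbb{X}^\delta$ together with It\^o's formula along the trajectory of $X$. By \eqref{eq:TV-weakformula[TV4AC-25]}, it suffices to treat $\varphi \in \mathcal{C}^0_b(H)$, the case $\varphi \in \mathcal{B}_b(H)$ following via pointwise approximation by a uniformly bounded sequence in $\mathcal{C}^0_b(H)$ and dominated convergence.

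Since $F_\delta$ is globally Lipschitz continuous (Lemma \ref{lem:f'_delta[TV4AC-25]}), the theory of \cite{BREHIER2018Kolmogorov} ensures $\nu^\delta$ is a classical $C^{1,2}$-solution of \eqref{eq:Kolmogorov.nu[TV4AC-25]} on $(0,\infty)\times H$. Applying the (mild) It\^o formula to $s\mapsto \nu^\delta(t-s, X(s))$ on $[0, t-\eta]$, using \eqref{eq:Kolmogorov.nu[TV4AC-25]} to cancel the $-Ax$-drift and the trace term (the diffusions of $X$ and $\mathbb{X}^\delta$ coincide), taking expectations to annihilate the martingale part, and passing $\eta\to 0^+$ via the continuity $\nu^\delta(\eta,\cdot)\to\varphi(\cdot)$ and boundedness $\|\nu^\delta\|_0\le\|\varphi\|_0$, yields the fundamental identity
\begin{equation*}
\mathbb{E}[\varphi(X(t,X_0))] - \mathbb{E}[\varphi(\mathbb{X}^\delta(t,X_0))]
= \int_0^t \mathbb{E}\big[D\nu^\delta(t-s, X(s)).\big(\epsilon^{-1}(F(X(s)) - F_\delta(X(s)))\big)\big]\,\mathrm{d}s.
\end{equation*}

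The conclusion follows by combining two ingredients. First, the rough gradient bound from Lemma \ref{lem:markov_exponential_estimate_exp[TV4AC-25]} gives $|D\nu^\delta(t-s,x).y| \le \|\varphi\|_0 e^{\epsilon^{-1}\widetilde{L}_f(t-s)}(t-s)^{-1/2}\|y\|$. Second, \eqref{eq:|F_tau(u)-F(u)|[TV4AC-25]} specialized to $\alpha=\beta=1$, $\theta=\tfrac12$, $\tau=\delta$, together with the polynomial growth bound \eqref{eq:asuume_|f(u)|[TV4AC-25]}, yields pointwise
\begin{equation*}
|f_\delta(u)-f(u)| \le \sqrt{\delta}\,|u|^{2q-2}|f(u)| \le C\sqrt{\delta}\,(1+|u|^{4q-3}),
\end{equation*}
which, after squaring and integrating over the bounded interval $\mathcal{D}$, gives $\|F(u)-F_\delta(u)\| \le C\sqrt{\delta}\,(1+\|u\|_V^{4q-3})$. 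The uniform-in-time moment bound \eqref{eq:bound_X(t)[TV4AC-25]} then controls $\mathbb{E}[\|X(s)\|_V^{4q-3}]$ by $C(1+\|X_0\|_V^{4q-3})$. Substituting and applying the elementary estimate $\int_0^t e^{c(t-s)}(t-s)^{-1/2}\,\mathrm{d}s \le 2 e^{ct} t^{1/2}$ with $c=\epsilon^{-1}\widetilde{L}_f$ produces precisely the stated bound.

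The main technical obstacle is justifying the It\^o step as $s\to t$: the derivative bound blows up like $(t-s)^{-1/2}$, but this singularity is integrable, so the time integral converges absolutely, and the strong Feller smoothing enjoyed by the Lipschitz-drift SPDE \eqref{eq:spde_auxiliary[TV4AC-25]} ensures $\nu^\delta \in C^{1,2}$ away from $s=t$. All remaining steps are routine bookkeeping once these two estimates and the Kolmogorov identity are in place.
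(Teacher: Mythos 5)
Your proposal is correct and follows essentially the same route as the paper's proof: the Kolmogorov equation for $\nu^\delta$ combined with It\^o's formula along $X$ to obtain the weak-error identity, the rough gradient bound from Lemma \ref{lem:markov_exponential_estimate_exp[TV4AC-25]}, the pointwise estimate $|f_\delta(u)-f(u)|\le \sqrt{\delta}\,|u|^{2q-2}|f(u)|$, and the uniform moment bound \eqref{eq:bound_X(t)[TV4AC-25]}. The only additions are your explicit reduction from $\mathcal{B}_b(H)$ to $\mathcal{C}^0_b(H)$ and the remark on the integrable singularity, both of which the paper leaves implicit.
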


\begin{proof}
For brevity, we denote $X(s):=X(s,X_0)$, $s \in [0,t]$.
Recalling $\nu^\delta(t, x) = \mathbb E[\varphi (\mathbb{X}^{\delta} (t,x))]$ and using It\^o's formula, 
together with the associated Kolmogorov equation \eqref{eq:Kolmogorov.nu[TV4AC-25]},
we get
\begin{equation}
\begin{split}
 &
 \Big| \mathbb{E} \big[\varphi \big(X(t, X_0) \big) \big] - \mathbb{E} \big[\varphi(\mathbb{X}^{\delta}(t, X_0)) \big] \Big|
\\
        &=
        \big| \mathbb{E} \big[ \nu^\delta \big(0, X(t) \big) \big] - \mathbb{E} \big[ \nu^\delta \big(t, X_0 \big) \big] \big|
\\
&=
 \mathbb E \bigg[ 
 \int_{0}^{t}
-\Big(
D \nu^\delta \big( t -s, X(s) \big). \big(-A X(s) + \epsilon^{-1} F_\delta \big( X(s) \big) \big) 
+  \frac{1}{2} \sum_{j \in \mathbb N} D^2 \nu^\delta \big( t -s, X(s) \big).(e_j, e_j)
\Big) \,\mathrm{d}s \\
&\quad
+
\int_{0}^{t}
\bigg(
D \nu^\delta \big( t-s, X(s) \big).\big( -A X(s) +  \epsilon^{-1} F ( X(s) )  \big)
+
\frac{1}{2} \sum_{j \in \mathbb N} D^2 \nu^\delta \big( t-s, X(s) \big).(e_j, e_j) 
\bigg)
\,\mathrm{d}s 
\bigg]\\
&= \mathbb E \bigg[ 
\int_{0}^{t} D \nu^\delta \big( t-s, X(s) \big). 
\big( \epsilon^{-1} F( X(s) )  -  \epsilon^{-1} F_\delta \big( X(s) \big) \big) \,\mathrm{d}s
\bigg].
\end{split}
\end{equation}
Using Lemma \ref{lem:markov_exponential_estimate_exp[TV4AC-25]} and 
the property
$    | f_{\delta}(u) - f(u) |
  \leq
    \sqrt{\delta}
     |u|^{ 2q-2 }
     |f(u)|$ acquired similarly as in Proposition \ref{prop:F_tau[TV4AC-25]},
one further deduces
\begin{equation}
\begin{aligned}
 &
 \Big| \mathbb{E} \big[\varphi \big(X(t, X_0) \big) \big] - \mathbb{E} \big[\varphi(\mathbb{X}^{\delta}(t, X_0)) \big] \Big|
\\
&\leq
 \epsilon^{-1}
 \| \varphi \|_0
\mathbb E \bigg[ \int_{0}^{t}  
     e^{
     \epsilon^{-1} 
     \widetilde{L}_f 
     (t-s)
     }
     (t-s)^{-\frac12}
     \|  F( X(s) )  - F_\delta ( X(s) )  \| 
      \,\mathrm{d}s
     \bigg] 
\\
&\leq
\sqrt{\delta}
 \epsilon^{-1}
 \| \varphi \|_0
\mathbb E 
\bigg[ \int_{0}^{t}  
     e^{
     \epsilon^{-1}
     \widetilde{L}_f 
     (t-s)
     } 
     (t-s)^{-\frac12}
     \|X(s)\|_V^{ 2q-2 } 
    \|F(X(s))\|
      \,\mathrm{d}s
     \bigg] 
\\
&\leq
C
 \epsilon^{-1}
  e^{
  \epsilon^{-1}
  \widetilde{L}_f 
  t} 
 t^{\frac12}
 \| \varphi \|_0
\Big(
1 
+
\|X_0\|_V^{  4q-3 }
\Big)
\sqrt{\delta},
\end{aligned}
\end{equation}
where Assumption \ref{assump:F(Nonlinearity)[TV4AC-25]} and the bound \eqref{eq:bound_X(t)[TV4AC-25]} were utilized.
The proof is completed.
\end{proof}


Despite an exponential dependence on both $t$ and $\epsilon^{-1}$, such a rough estimate for $D\nu^{\delta}$ established in Lemma \ref{lem:markov_exponential_estimate_exp[TV4AC-25]}  
is invoked only in the estimate for the error between $X$ and $\mathbb{X}^{\delta}$, 
which is $O(\sqrt{\delta})$ and thus vanishes as $\delta \to 0$.
In the next proposition, we derive a refined bound of $D\nu^{\delta}$ with a polynomial dependence on $\epsilon^{-1}$.

\begin{prop}
\label{prop:markov_exponential_estimate_pol[TV4AC-25]}
Let Assumptions \ref{assump:A(linear_operator)[TV4AC-25]}-\ref{assump:X_0(Initial Value)[TV4AC-25]} 
hold.
Then for all $\varphi \in \mathcal{C}^0_b(H)$, 
$x, y \in H$, with $t > 0$,
there exist
positive constants 
$C(\vartheta, q
) 
,
C( \widetilde{L}_f, t, \epsilon^{-1} ) $ with $\vartheta \in [0, \frac34)$, such that

\begin{equation}
\begin{aligned}
    | D \nu^\delta(t, x).y | 
    \leq 
&  \  C(\vartheta, q ) \|\varphi\|_0  e^{-rt} 
(
1
+
(t \wedge \epsilon)^{ -\frac{1}{2}-\vartheta } 
)
    \Big(
    1 
     + 
     \| 
     x
     \|^{2q-2}_{L^{4q-4}}
     \Big)
     \| A^{-\vartheta} y \|
\\
&\qquad     
  +
 C(  \widetilde{L}_f, t, \epsilon^{-1} )
 \| \varphi \|_0
 \|y\|
\Big(
1 
+
\sup_{s \geq 0}
\|\mathbb{X}^{\delta}(s,x)\|
^{  4q-3 }
_{
L^{  8q-6 }
( \Omega; V  )}
\Big)
\sqrt{\delta},
\end{aligned}
\end{equation}
where $r>0$ comes from Proposition \ref{prop:V-uniform_ergodicity_X(t)[TV4AC-25]}.

\end{prop}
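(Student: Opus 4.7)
My plan is to upgrade the crude bound of Lemma \ref{lem:markov_exponential_estimate_exp[TV4AC-25]} by combining a refined short-time Bismut--Elworthy--Li (BEL) estimate with a Markov-property bootstrap that imports the exponential ergodicity of the unperturbed SPDE $X$ (Proposition \ref{prop:V-uniform_ergodicity_X(t)[TV4AC-25]}). The $O(\sqrt{\delta})$ correction in the stated bound is the price for transferring ergodicity from $X$ to $\mathbb{X}^\delta$ via Proposition \ref{prop:error_of_X^delta-X[TV4AC-25]}. Write $P^\delta_s g(x) := \mathbb{E}[g(\mathbb{X}^\delta(s, x))]$ throughout.

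I would first establish a short-time refined BEL bound on $P^\delta_s$. Starting from the BEL representation \eqref{lem_D_of_C0} and the mild form of $\xi^\delta_y$ furnished by \eqref{eq:xi_def[TV4AC-25]}, the smoothing inequality $\|E(s) y\| \leq C s^{-\vartheta}\|A^{-\vartheta} y\|$, the pointwise growth $|f'_\delta(u)| \leq \widetilde{l}_f(1 + |u|^{2q-2})$ from Lemma \ref{lem:f'_delta[TV4AC-25]}, and the uniform moment bound of Lemma \ref{lem:X^delta-Bound[TV4AC-25]} combine through a Gronwall-type argument (with a suitable time-weighted BEL multiplier to cover the full range $\vartheta \in [0, 3/4)$) to yield, for every $g \in \mathcal{C}^0_b(H)$ and $s_0 > 0$,
\begin{equation*}
|D[P^\delta_{s_0} g](x).y| \leq C(\vartheta, q)\, e^{C \epsilon^{-1} s_0}\, s_0^{-1/2-\vartheta} \|g\|_0 \bigl(1 + \|x\|^{2q-2}_{L^{4q-4}}\bigr) \|A^{-\vartheta} y\|.
\end{equation*}

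For the given $t > 0$ I would then set $s_0 := t \wedge \epsilon$ so that $e^{C \epsilon^{-1} s_0} \leq e^C$ is benign, and invoke the Markov identity $\nu^\delta(t, \cdot) = P^\delta_{s_0}[\nu^\delta(t - s_0, \cdot)]$ together with the fact that $D P^\delta_{s_0}[\mathrm{constant}] \equiv 0$ to write, for any $c \in \mathbb{R}$,
\begin{equation*}
D\nu^\delta(t, x).y = D\bigl[P^\delta_{s_0}\bigl(\nu^\delta(t - s_0, \cdot) - c\bigr)\bigr](x).y.
\end{equation*}
Taking $c := \mathbb{E}[\varphi(X(t - s_0, z_*))]$ for a fixed reference $z_* \in V \cap \dot{H}^\gamma$ and writing $\widetilde{\nu}(s, z) := \mathbb{E}[\varphi(X(s, z))]$, I would split $\nu^\delta(t - s_0, z) - c = \mathrm{II}(z) + \mathrm{I}(z)$ with $\mathrm{II}(z) := \widetilde{\nu}(t - s_0, z) - \widetilde{\nu}(t - s_0, z_*)$ and $\mathrm{I}(z) := \nu^\delta(t - s_0, z) - \widetilde{\nu}(t - s_0, z)$. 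Proposition \ref{prop:V-uniform_ergodicity_X(t)[TV4AC-25]} bounds $\mathrm{II}$ uniformly in $z$ by $C\|\varphi\|_0 e^{-r(t - s_0)} \leq C e^{r \epsilon} \|\varphi\|_0 e^{-rt}$; inserting this uniform bound into the refined BEL of Step 1 delivers the first summand of the target estimate.

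The main technical obstacle lies in the $\mathrm{I}$-term, since Proposition \ref{prop:error_of_X^delta-X[TV4AC-25]} bounds $\mathrm{I}(z)$ only pointwise and with polynomial growth $\|z\|_V^{4q-3}$, preventing any uniform sup-norm treatment. I would resolve this by applying BEL directly to $P^\delta_{s_0} \mathrm{I}$ in the form
\begin{equation*}
D[P^\delta_{s_0} \mathrm{I}](x).y = s_0^{-1} \mathbb{E}\biggl[\mathrm{I}\bigl(\mathbb{X}^\delta(s_0, x)\bigr) \int_0^{s_0}\langle \xi^\delta_y(s,x), dW(s)\rangle\biggr],
\end{equation*}
using Cauchy--Schwarz to split the two factors, bounding the stochastic-integral factor by the rough estimate \eqref{eq:bound_xi_|y|[TV4AC-25]} (giving $e^{\epsilon^{-1} \widetilde{L}_f s_0} s_0^{1/2} \|y\|$), and evaluating the other factor with the pointwise bound on $\mathrm{I}$; the polynomial $\|\mathbb{X}^\delta(s_0, x)\|_V^{4q-3}$ growth is precisely absorbed into the factor $\sup_s \|\mathbb{X}^\delta(s, x)\|^{4q-3}_{L^{8q-6}(\Omega; V)}$ via Lemma \ref{lem:X^delta-Bound[TV4AC-25]}, specifically \eqref{eq:X^delta-V-bound[TV4AC25]}. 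The $\sqrt{\delta}$ saving is inherited from Proposition \ref{prop:error_of_X^delta-X[TV4AC-25]}, and all exponential-in-$\epsilon^{-1}$ prefactors are relegated to $C(\widetilde{L}_f, t, \epsilon^{-1})$ without penalty, since this second summand will vanish when $\delta$ is sent to zero in the outer analysis of Theorem \ref{Thm_main_sharpinterface[TV4AC-25]}.
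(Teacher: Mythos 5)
Your proposal is correct and follows essentially the same route as the paper: a refined short-time derivative bound obtained from the Bismut--Elworthy--Li formula together with the decomposition $\xi^\delta_y = E(\cdot)y + \widetilde{\xi}^\delta_y$, followed by a Markov-property bootstrap at time $t\wedge\epsilon$ that imports the ergodicity of $X$ through the $O(\sqrt{\delta})$ comparison of Proposition \ref{prop:error_of_X^delta-X[TV4AC-25]}. The only deviations are cosmetic --- you center by $\mathbb{E}[\varphi(X(t-s_0,z_*))]$ rather than by $\int\varphi\,\mathrm{d}\mu_X$ and merge the paper's two time regimes via $s_0=t\wedge\epsilon$ --- though note that the product $F_\delta'(\mathbb{X}^{\delta})E(s)y$ must be estimated using the $V$-norm smoothing $\|E(s)y\|_V\leq C (s\wedge 1)^{-1/4-\vartheta}e^{-cs}\|A^{-\vartheta}y\|$ paired with $\|f_\delta'(\mathbb{X}^{\delta})\|_{L^2}$ (not the $H$-norm smoothing you quote), which is exactly what produces the restriction $\vartheta<\tfrac34$ and the $\|x\|^{2q-2}_{L^{4q-4}}$ moment factor.
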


\begin{proof}

Below, we divide the proof into two parts:  
$t \in (0, 2\epsilon ]$ and $t > 2\epsilon$.
\begin{itemize}
    \item Part $1$: $t \in (0, 2\epsilon ]$.
\end{itemize}
Let \((P_t^\delta)_{t\ge0}\) be the Markov semigroup associated with \(\mathbb{X}^{\delta}\) in \eqref{eq:spde_auxiliary[TV4AC-25]}, so that
\(P_t^\delta\varphi(x)=\nu^\delta(t,x)\).
%
Denote $ \| \varphi \|_1 := \sup _{x \in H}  \sup _{h \in H, \|h\| \leq 1 } |D \varphi(x). h| $ for any $\varphi \in \mathcal{C}_b^1(H)$.
As a direct consequence of Lemma \ref{lem:markov_exponential_estimate_exp[TV4AC-25]},
for any $t>0$ and $ \varphi \in \mathcal{C}_b^0(H)$,
one knows
$P_t^\delta \varphi \in \mathcal{C}^1_b(H) \subset \mathcal{C}^0_b(H)$ and
\begin{equation}\label{eq:markov_semigroup_estimate}
    \| P_t^\delta \varphi \|_1
    \leq
      e^{
      \epsilon^{-1}
      \widetilde{L}_f 
      t
      }
     t^{-\frac{1}{2}}
     \| \varphi \|_0 .
\end{equation}
%
%
Furthermore, the Markov property of $\mathbb{X}^{\delta}$ \eqref{eq:spde_auxiliary[TV4AC-25]} implies, for all $t>0$ and $x\in H$,  
\begin{equation}
\label{eq:D-nu-step1}
\nu^\delta(t,x) 
= 
\E
\left[ 
\nu^\delta
\left(
\tfrac{t}{2}, \mathbb{X}^{\delta}(\tfrac{t}{2}, x)
\right) 
\right]
= P_{\frac{t}{2}}^\delta
\nu^\delta
\left(
\tfrac{t}{2}, \cdot \right)
\big(
\mathbb{X}^{\delta}(\tfrac{t}{2}, x)
\big),
\end{equation}
By \eqref{eq:markov_semigroup_estimate} and Lemma \ref{lem:markov_exponential_estimate_exp[TV4AC-25]},
one notes that 
both functions 
$\nu^\delta(t, \cdot)$ and $\mathbb{X}^{\delta}(t,\cdot)$ are continuously differentiable for $t>0$,
implying that $\nu^\delta
\left(\tfrac{t}{2}, \mathbb{X}^{\delta}(\tfrac{t}{2},x)\right)$
is also continuously differentiable.
As a result, 
using the chain rule to \eqref{eq:D-nu-step1} gives
\begin{equation}\label{eq:chain_rule}
D
\left( \nu^\delta(\tfrac{t}{2}, \mathbb{X}^{\delta}(\tfrac{t}{2}, x)) 
\right) y
= D\nu^\delta(\tfrac{t}{2}, \mathbb{X}^{\delta}(\tfrac{t}{2}, x)) \, 
\xi^\delta_y(\tfrac{t}{2},x),
\quad
\forall x, y \in H,
\end{equation}
where $\xi^\delta_y$ solves the variational equation \eqref{eq:xi_def[TV4AC-25]},
leading to
\begin{equation}\label{prop_Du(t,x)_step1}
    |D \nu^\delta(t, x).y| 
    \leq 
    \|  P_{\frac{t}{2}}^\delta \varphi \|_1 \mathbb E [ \|  \xi^\delta_y(\tfrac{t}{2},x) \| ]
    \leq
    \sqrt{2}
    e^{
    \epsilon^{-1} 
    \widetilde{L}_f
    t}
     t^{-\frac{1}{2}}
     \| \varphi \|_0 
     \mathbb E [ \|  \xi^\delta_y(\tfrac{t}{2},x) \| ].
\end{equation}
By denoting
\begin{equation}
\label{eq:wlide-xi}
\widetilde{\xi}^\delta_y(t,x):= 
 \xi^\delta_y(t,x) - E(t) y, 
\quad
t \geq 0,
\end{equation}
and utilizing \eqref{eq:xi_def[TV4AC-25]}, one has
\begin{equation}
\mathrm{d}
\widetilde{\xi}^\delta_y(t,x)
=
( 
-A
+ 
\epsilon^{-1} 
F_\delta' (\mathbb{X}^{\delta}(t,x)) )
\widetilde{\xi}^\delta_y(t,x)
\,
\mathrm{d}t
+
\epsilon^{-1} 
F_\delta' (\mathbb{X}^{\delta}(t,x))
E(t)y \,\mathrm{d}t.
\end{equation}
As a result, 
\begin{equation}
\widetilde{\xi}^\delta_y(t,x)
=
\int_0^t \Gamma(t,s) 
\epsilon^{-1}
 F_\delta'(\mathbb{X}^{\delta}(s,x)) 
E(s)y \,\mathrm{d}s,
\end{equation}
where $\Gamma(t,s)$ is the evolution operator associated with the linear equation
\begin{equation}
    \frac{ \partial \Gamma(t,s)u }{ \partial t}
    =
    (-A + \epsilon^{-1} F_\delta'( \mathbb{X}^{\delta}(t,x) ) ) \Gamma(t,s)u,
    \quad
    \Gamma(s,s)u = u \in H.
\end{equation}
Clearly, following a similar way as in \eqref{eq:xi_y(deduce)[TV4AC-25]}-\eqref{eq:bound_xi_|y|[TV4AC-25]},
one can deduce
\begin{equation}
    \| \Gamma(t,s) y \|  
    \leq
    e^{
    \epsilon^{-1}
    \widetilde{L}_f
    (t-s) 
    } \|y\|.
\end{equation}
%
%
Therefore, one then shows that for any 
$\vartheta \in [0,\frac34)$,
\begin{equation}
\label{eq:estimate-wlide-xi}
\begin{split}
    \big\|
 \widetilde{\xi}^\delta_y(t,x)
    \big\|
    &
    =
\bigg\|
\int_0^t 
\Gamma(t,s) 
\epsilon^{-1} 
F_\delta'(\mathbb{X}^{\delta}(s,x))
E(s)y 
\,\mathrm{d}s 
\bigg\| 
\\
& 
\leq
\epsilon^{-1}  
\int_0^t
e^{
\epsilon^{-1} 
\widetilde{L}_f
(t-s)
} 
\| F_\delta'(\mathbb{X}^{\delta}(s,x))\| 
\|E(s)y\|_V
\,\mathrm{d}s
\\
&
\leq
C 
\epsilon^{-1} 
\int_0^t
( 
1 
+ 
\|\mathbb{X}^{\delta}(s,x)\|_{ L^{4q-4} }^{2q-2}
)
e^{
\epsilon^{-1} 
\widetilde{L}_f
(t-s) 
} 
( s \wedge 1 )^{-\frac14-\vartheta} 
e^{- cs} 
\,\mathrm{d}s
\|A^{-\vartheta} y\|,
\end{split}
\end{equation}
where we also used 
\eqref{eq:E(t)_semigroup_property[TV4AC-25]},
\eqref{eq:E(t)_semigroup_property_V[TV4AC-25]} 
and \eqref{eq:|Df_delta|_V[TV4AC-25]}.
Combining \eqref{eq:wlide-xi} and \eqref{eq:estimate-wlide-xi} yields, 
for $t \leq 2 \epsilon$ and $p \geq 1$,
\begin{equation}\label{eq:bound_xi_Ay[TV4AC-25]}
    \begin{split}
        \| \xi^\delta_y(t,x) \|_{L^p(\Omega;H)}
        &\leq
         \|E(t) y\| 
         + 
         \|\widetilde{\xi}^\delta_y(t,x)\|_{L^p(\Omega;H)}
         \\
        &\leq
        C(\vartheta) 
        e^{ 2 \widetilde{L}_f }
        ( 1 + t^{-\vartheta} )
        \Big(1 
        + 
        \sup_{s \in [0,t]}
        \|
        \mathbb{X}^{\delta}(s,x)
        \|
        ^{2q-2}
        _
        {L^{p(2q-2)}(\Omega; L^{4q-4} )}
        \Big) 
        \| A^{-\vartheta} y \|.
    \end{split}
\end{equation}
Hence 
it follows from \eqref{prop_Du(t,x)_step1} and
\eqref{eq:bound_xi_Ay[TV4AC-25]} that
for $t \leq 2\epsilon$,
\begin{equation}\label{eq:markov-semigroup-estimate(t<epsilon)[TV4AC-25]}
\begin{aligned}
    |D \nu^\delta(t, x).y| 
   &\leq 
     C(\vartheta)  
     (1 + t^{-\frac{1}{2}-\vartheta}) 
     \|\varphi\|_0
     \Big(1 
     +
     \sup_{s \in [0,t]} 
     \|
     \mathbb{X}^{\delta}(s,x)
     \|
         ^{2q-2}
        _
        {L^{2q-2}(\Omega; L^{4q-4} )}
     \Big) 
     \| A^{-\vartheta} y \| 
\\
   &\leq 
     C(\vartheta) 
     e^{2r\epsilon} 
     e^{-rt}
     (1 + t^{-\frac{1}{2}-\vartheta}) 
     \|\varphi\|_0
     \Big(
     1 
     + 
     \sup_{s \in [0,t]} 
     \|
     \mathbb{X}^{\delta}(s,x)
     \|
         ^{2q-2}
        _
        {L^{2q-2}(\Omega; L^{4q-4} )}
     \Big) 
     \| A^{-\vartheta} y \|,
\end{aligned}
\end{equation}
as asserted.

\begin{itemize}
    \item Part $2$: $t > 2\epsilon$.
\end{itemize}

Denote $\nu(t,x) := \E[ \varphi( X(t,x) ) ], t \geq 0, x \in H$, for all $\varphi \in \mathcal{C}^0_b(H)$.
The Markov property (see, e.g., \cite{cerrai2001second}) ensures that
$\nu( t, x ) = \mathbb{E}[ \nu( t-\epsilon, X(\epsilon,x)  ) ]$ for $t > \epsilon$.
By Proposition \ref{prop:V-uniform_ergodicity_X(t)[TV4AC-25]}, there exists a unique invariant measure $\mu_X$ for \eqref{eq:spde[TV4AC-25]} (see, e.g., \cite{da2006introduction}), and it holds that
\begin{equation}
    \Big|
    \nu( t- \epsilon, x) - \int \varphi \, \mathrm{d} \mu_X
    \Big|
    \leq
    C e^{-r (t- \epsilon )} \|\varphi\|_0 .
\end{equation}
Denote $\phi(x) :=  \nu^\delta( t- \epsilon, x) - \int \varphi \, \mathrm{d} \mu_X$, and $\nu_\phi^\delta(t,x) := \E[\phi(\mathbb{X}^{\delta}(t,x))]$.
Using 
Proposition \ref{prop:V-uniform_ergodicity_X(t)[TV4AC-25]} and
Proposition \ref{prop:error_of_X^delta-X[TV4AC-25]} leads to
\begin{equation}\label{eq:phi_estimate_in_sec_part[TV4AC-25]}
\begin{aligned}
    |\phi(x)|
&\leq
    | \nu^\delta( t- \epsilon, x) - \nu( t- \epsilon, x) |
    +
    \Big|
    \nu( t- \epsilon, x) - \int \varphi \, \mathrm{d} \mu_X
    \Big|
\\&\leq
    \Big| \mathbb{E}
    \big[\varphi (\mathbb{X}^{\delta}(t-\epsilon,x)) \big] - \mathbb{E} \big[\varphi(X(t-\epsilon,x)) \big] \Big| 
   +
   C e^{-r (t- \epsilon)} \|\varphi\|_0
\\
&\leq
    C
 \epsilon^{-1}
  e^{
  \epsilon^{-1}
  \widetilde{L}_f
  (t-\epsilon)
  } 
 (t-\epsilon)^{\frac12}
 \| \varphi \|_0
\big(
1 
+
\|x\|_V^{  4q-3}
\big)
\sqrt{\delta}
    +
    C e^{-r (t- \epsilon)} \|\varphi\|_0 . 
\end{aligned}
\end{equation}
Note that
$\nu^\delta(t,x) = \E[\phi(\mathbb{X}^{\delta}(\epsilon,x))] + \int \varphi \, \mathrm{d} \mu_X$.
Combining \eqref{lem_D_of_C0} with  \eqref{eq:bound_xi_|y|[TV4AC-25]}, \eqref{eq:phi_estimate_in_sec_part[TV4AC-25]}, and following a similar argument as used in Lemma~\ref{lem:markov_exponential_estimate_exp[TV4AC-25]} shows
\begin{equation}
\begin{aligned}
 &
 |D \nu^\delta(t, x). y |
=
   |D \nu^\delta_\phi(\epsilon,x). y |
\\
 &  =
 \bigg|
 \epsilon^{-1}
 \mathbb E
 \left[ 
 \phi(\mathbb{X}^{\delta}(\epsilon,x)) \int_0^{\epsilon}
 \langle  
 \xi^\delta_y(s,x)
 ,
 \mathrm{d} W(s)
 \rangle
 \right]  
 \bigg|
\\
 &\leq
 \epsilon^{-1} 
  \left(
 \E[
 |\phi(\mathbb{X}^{\delta}(\epsilon,x))|^2
 ]
 \right)^\frac12
\left( 
\int_0^{\epsilon}
\mathbb E
[ 
| \xi^\delta_y(s,x)|^2 
] 
\,
\mathrm{d} s 
\right)^{\frac{1}{2}}
\\
 &\leq
 C
\epsilon^{-\frac12}
e^{ 
\widetilde{L}_f
}
 \|y\|
\Big(
 \epsilon^{-1}
  e^{
  \epsilon^{-1}
  \widetilde{L}_f
  (t-\epsilon)
  } 
 (t-\epsilon)^{\frac12}
 \| \varphi \|_0
\Big(
1 
+
\|\mathbb{X}^{\delta}(\epsilon,x)\|
^{  4q-3 }
_{
L^{ 8q-6 }
( \Omega; V  )}
\Big)
\sqrt{\delta}
    +
    C e^{-r (t- \epsilon)}
    \|\varphi\|_0 
\Big)
\end{aligned}
\end{equation}
for $t > \epsilon$. 
Consequently, for all $t>\epsilon$,
\begin{equation}
\| D\nu^\delta (t,x) \|
    \leq
      C e^{-r t}
    \|\varphi\|_0 
 \epsilon^{-\frac12}
 +
 C(  \widetilde{L}_f , t, \epsilon^{-1} )
 \| \varphi \|_0
\Big(
1 
+
\|\mathbb{X}^{\delta}(\epsilon,x)\|
^{  4q-3 }
_{
L^{ 8q-6 }
( \Omega; V  )}
\Big)
\sqrt{\delta}.
\end{equation}
In view of  \eqref{eq:bound_xi_|y|[TV4AC-25]}, \eqref{eq:chain_rule} and \eqref{eq:bound_xi_Ay[TV4AC-25]}, 
one obtains for all $t > 2 \epsilon$, i.e., $t - \epsilon > \epsilon$,
\begin{equation}\label{eq:markov-semigroup-estimate(t>epsilon)[TV4AC-25]}
\begin{aligned}
  &
  |D \nu^\delta(t, x).y| 
  \\
&\leq 
 \|
 D\nu^\delta(t-\epsilon, \mathbb{X}^{\delta}(\epsilon, x)) 
 \|_{L^2(\Omega; H)}
    \| 
    \xi^\delta_y(\epsilon,x)
    \|_{L^2(\Omega; H)}
\\
&\leq
\Big(
         C e^{-rt}
    \|\varphi\|_0 
\epsilon^{-\frac12}
+
   C ( \widetilde{L}_f, t, \epsilon^{-1})
 \| \varphi \|_0
\Big(
1 
+
\sup_{s \geq 0 }
\|\mathbb{X}^{\delta}(s,x)\|
^{  4q-3 }
_{
L^{ 8q-6 }
( \Omega; V  )}
\Big)
\sqrt{\delta}
\Big)
\|
\xi^\delta_y(\epsilon,x)
\|_{L^2(\Omega;H)}
 \\&\leq
   C( \vartheta )
   e^{-r t}
    \|\varphi\|_0 
\epsilon^{-\frac12 -\vartheta}
 \Big(1 +
        \sup_{s \in [0,\epsilon]}
        \|
        \mathbb{X}^{\delta}(s,x)
        \|
        ^{2q-2}
        _
        {L^{4q-4}(\Omega; L^{4q-4} )}
 \Big)
 \|A^{-\vartheta} y\|
\\
&\qquad
  +
 C( \widetilde{L}_f, t, \epsilon^{-1} )
 \| \varphi \|_0
\Big(
1 
+
\sup_{s \geq 0}
\|\mathbb{X}^{\delta}(s,x)\|
^{  4q-3 }
_{
L^{  8q-6 }
( \Omega; V  )}
\Big)
\|y\|
\sqrt{\delta}
.
\end{aligned}
\end{equation}
Combining \eqref{eq:markov-semigroup-estimate(t<epsilon)[TV4AC-25]} and \eqref{eq:markov-semigroup-estimate(t>epsilon)[TV4AC-25]} together,
one employs Lemma \ref{lem:X^delta-Bound[TV4AC-25]} and obtains
the desired result.
\end{proof}

\section{Convergence analysis}
\label{sec:convergence_analysis[TV4AC-25]}
This section is devoted to the error analysis of the proposed explicit discretization scheme.
Error bounds measured under the TV distance are established for a near sharp interface limit $\epsilon \rightarrow 0$, with only polynomial dependence on the interface parameter $\epsilon$ (see Theorem \ref{Thm_main_sharpinterface[TV4AC-25]}). The uniform-in-time error bound is also provided in Corollary \ref{cor_main_uniform[TV4AC-25]} for $\epsilon=1$ fixed. 


For the purpose of the convergence analysis,
we introduce a continuous version of the time-stepping schemes \eqref{eq:time_discretization[TV4AC-25]}, defined by
\begin{equation}\label{eq:time_scheme_continuous_version[TV4AC-25]}
    X^{\tau}(t) 
    = 
    E(t) X^{\tau}_0
    + 
    \int_0^t 
     E( t - \lfloor s \rfloor_{\tau} ) \epsilon^{-1} F_{\tau}( X^{\tau}(\lfloor s \rfloor_{\tau}) )  \,\mathrm{d}s
    +  
    \mathcal{O}_t,   
    \quad
    X^{\tau}_0
    =
    X(0),
\end{equation}
where $t \geq 0$, and $\lfloor s\rfloor_{\tau} := t_k$ for $s \in [t_k, t_{k+1}), k \in \N_0 $ 
and $\mathcal{O}_t$ is given in Lemma \ref{lem:O_bound[TV4AC-25]}.
Moreover, we note the process \eqref{eq:time_scheme_continuous_version[TV4AC-25]} satisfies $  X^{\tau}(t)=
 X^{\tau}_{t_k} $ for $ t = t_k, k \in \N_0$ and
\begin{equation}\label{eq:dX^tau(t)[TV4AC-25]}
    \mathrm{d}
    X^{\tau}(t)
    =
    - 
    A
    X^{\tau}(t)
    \,\mathrm{d}t
    + 
    E( t - \lfloor t \rfloor_{\tau} )
    \epsilon^{-1}
    F_{\tau} 
    ( X_{\lfloor t \rfloor_{\tau} }^{\tau} ) 
    \,\mathrm{d}t
    + 
    \mathrm{d}W(t),
    \quad
    t>0.
\end{equation}

We now present uniform-in-time moment bounds for the process \eqref{eq:time_scheme_continuous_version[TV4AC-25]} in $L^{2\rho}(\mathcal{D})$, $V$, and $\dot{H}^\gamma$ norms,
which will be used in the subsequent convergence analysis.

\begin{prop}
\label{prop:X^tau(t)-bound[TV4AC-25]}
    Let Assumptions \ref{assump:A(linear_operator)[TV4AC-25]}-\ref{assump:X_0(Initial Value)[TV4AC-25]} and  condition \eqref{eq:epsilon_condition_f_tau[TV4AC-25]}
    hold. 
     Let $X^{\tau}(t), t \geq 0$ be defined by \eqref{eq:time_scheme_continuous_version[TV4AC-25]}.
     For any $p \geq 1$,
     $\gamma < \frac12$ and 
     any integer $\rho \geq 1$, there exist positive constants
     $  C( p, \rho, q, \alpha)$,
     $ C( p, q,  \alpha) $
     and 
     $ C( p, q, \alpha, \gamma) $
     independent of $\tau, \beta$
      such that
    \begin{align}
    &
        \big\|
          X^{\tau}(t) 
        \big\|_{L^p( \Omega; L^{2\rho} )}
     \leq
     C
     (\beta
     \tau^{\theta}
     )^\alpha
 \|  X_0 \|^{2q-1}_{L^{p(2q-1)}( \Omega; L^{2\rho(2q-1)} )}
+
C(p,\rho, q, \alpha)
\cdot
\big(
1
+
(
\beta
\tau^\theta
)^{  \alpha  (4q-1-\frac{1}{2\rho}) }
\big),
\\
&
        \big\|
        X^{\tau}(t)
        \big\|_{L^p( \Omega; V )} 
  \leq
   e^{-\epsilon^{-1}t}
    \|X_0 \|_{L^p(\Omega;V)}
   +
   C(p,  q, \alpha)
   \epsilon^{-\frac14}
   \Big(
   1
   +
     (\beta
     \tau^{\theta}
     )^\alpha
   \|
     X_0
     \|^{2q-1}_{L^{(2q-1)p}(\Omega;L^{4q-2})} 
    +
      (
   \beta
   \tau^\theta
   )^{(4q-\frac32)\alpha} 
\Big),
\\
&
        \big\|
        X^{\tau}(t)
        \big\|_{L^p( \Omega;\dot{H}^\gamma)}
\leq
   e^{-\epsilon^{-1}t}
    \|X_0 \|_{L^p(\Omega;\dot{H}^\gamma)}
   +
   C(p,  q, \alpha,\gamma)
   \epsilon^{-\frac{\gamma}{2}}
   \Big(
   1
   +
     (\beta
     \tau^{\theta}
     )^\alpha
   \|
     X_0
     \|^{2q-1}_{L^{(2q-1)p}(\Omega;L^{4q-2})} 
    +
      (
   \beta
   \tau^\theta
   )^{(4q-\frac32)\alpha} 
\Big).
    \end{align}
\end{prop}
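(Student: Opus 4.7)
I would establish the three bounds in order, first extracting the $L^{2\rho}$-estimate from Theorem \ref{thm:X^tau-bound[TV4AC-25]}, then using an exponential-damping rewrite of the scheme's SDE to convert the $\epsilon^{-1}$-prefactor of the nonlinearity into the milder factors $\epsilon^{-1/4}$ and $\epsilon^{-\gamma/2}$ appearing in the $V$- and $\dot H^\gamma$-bounds.

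\textbf{Step 1 ($L^{2\rho}$-bound).} For $t\in[t_k,t_{k+1})$ the defining identity \eqref{eq:time_scheme_continuous_version[TV4AC-25]} collapses to
\begin{equation*}
X^\tau(t)=E(t-t_k)X^\tau_{t_k}+(t-t_k)E(t-t_k)\epsilon^{-1}F_\tau(X^\tau_{t_k})+\bigl(\mathcal{O}_t-E(t-t_k)\mathcal{O}_{t_k}\bigr).
\end{equation*}
Combining the $L^{2\rho}$-contractivity of $E(\cdot)$ (\cite[Proposition~4.2]{wang2025uniform}), the pointwise bound $|f_\tau|\le|f|\le C(1+|u|^{2q-1})$ from \eqref{eq:|F_tau(u)|[TV4AC-25]} and \eqref{eq:asuume_|f(u)|[TV4AC-25]}, and the key absorption $\tau\epsilon^{-1}\le C(\beta\tau^\theta)^\alpha$ extracted from \eqref{eq:epsilon_condition_f_tau[TV4AC-25]}, I obtain
\begin{equation*}
\|X^\tau(t)\|_{L^{2\rho}}\le\|X^\tau_{t_k}\|_{L^{2\rho}}+C(\beta\tau^\theta)^\alpha\bigl(1+\|X^\tau_{t_k}\|^{2q-1}_{L^{2\rho(2q-1)}}\bigr)+\bigl\|\mathcal{O}_t-E(t-t_k)\mathcal{O}_{t_k}\bigr\|_{L^{2\rho}}.
\end{equation*}
Taking $L^p(\Omega)$-norms, invoking Theorem \ref{thm:X^tau-bound[TV4AC-25]} with $\rho$ replaced first by $\rho$ and then by $\rho(2q-1)$, and bounding the stochastic-convolution piece via Lemma \ref{lem:O_bound[TV4AC-25]}, the final exponent of $(\beta\tau^\theta)$ collapses to $\alpha+\alpha(2q-1)(2-\tfrac1{2\rho(2q-1)})=\alpha(4q-1-\tfrac1{2\rho})$, which matches the claimed bound.

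\textbf{Step 2 ($V$- and $\dot H^\gamma$-bounds).} To beat the $\epsilon^{-1}$ prefactor of the nonlinearity I add and subtract $\epsilon^{-1}X^\tau$ in \eqref{eq:dX^tau(t)[TV4AC-25]}, rewriting the scheme's equation as
\begin{equation*}
\mathrm{d}X^\tau(t)=-(A+\epsilon^{-1})X^\tau(t)\,\mathrm{d}t+\epsilon^{-1}\bigl[X^\tau(t)+E(t-\lfloor t\rfloor_\tau)F_\tau(X^\tau_{\lfloor t\rfloor_\tau})\bigr]\,\mathrm{d}t+\mathrm{d}W(t),
\end{equation*}
so that variation of constants against $A+\epsilon^{-1}I$ yields
\begin{equation*}
X^\tau(t)=e^{-\epsilon^{-1}t}E(t)X_0+\epsilon^{-1}\!\!\int_0^t\!e^{-\epsilon^{-1}(t-s)}E(t-s)\bigl[X^\tau(s)+E(s-\lfloor s\rfloor_\tau)F_\tau(X^\tau_{\lfloor s\rfloor_\tau})\bigr]\,\mathrm{d}s+\widetilde{\mathcal O}_t,
\end{equation*}
with $\widetilde{\mathcal O}_t:=\int_0^t e^{-\epsilon^{-1}(t-s)}E(t-s)\,\mathrm{d}W(s)$, whose $V$- and $\dot H^\gamma$-moments are uniformly bounded by Lemma \ref{lem:O_bound[TV4AC-25]} after using $e^{-\epsilon^{-1}(t-s)}\le 1$. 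For the $V$-norm I then apply the smoothing bound $\|E(r)y\|_V\le C(1\wedge r)^{-1/4}e^{-cr}\|y\|$ from \eqref{eq:E(t)_semigroup_property_V[TV4AC-25]} together with the decisive computation
\begin{equation*}
\epsilon^{-1}\int_0^t e^{-\epsilon^{-1}(t-s)}(1\wedge(t-s))^{-1/4}\,\mathrm{d}s\le C\epsilon^{-1/4},
\end{equation*}
obtained via the change of variables $u=(t-s)/\epsilon$ and a $\Gamma$-function estimate, and combine with the Step 1 bounds at $\rho=1$ (to control $\|X^\tau(s)\|_{L^p(\Omega;H)}$) and $\rho=2q-1$ (to control $\|X^\tau_{\lfloor s\rfloor_\tau}\|^{2q-1}_{L^{p(2q-1)}(\Omega;L^{4q-2})}$); the $(\beta\tau^\theta)^\alpha\|X_0\|^{2q-1}$ term of the $V$-bound arises from the Step 1 estimate of $\|X^\tau(s)\|_{L^p(\Omega;H)}$. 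The $\dot H^\gamma$-bound ($\gamma<\tfrac12$) is entirely analogous, substituting $\|A^{\gamma/2}E(r)\|_{\mathcal L(H)}\le Cr^{-\gamma/2}$ and computing $\epsilon^{-1}\int_0^t e^{-\epsilon^{-1}(t-s)}(t-s)^{-\gamma/2}\,\mathrm{d}s\le C\epsilon^{-\gamma/2}$.

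\textbf{Main obstacle.} The principal technical obstacle is the careful bookkeeping of powers of $(\beta\tau^\theta)^\alpha$: each appearance of $F_\tau$ contributes a factor $(\beta\tau^\theta)^\alpha$ absorbed from $\tau\epsilon^{-1}$, while the discrete bound of Theorem \ref{thm:X^tau-bound[TV4AC-25]} contributes $(\beta\tau^\theta)^{\alpha(2-1/(2\rho'))}$ in the relevant $L^{2\rho'}$-norm; these two contributions must be combined, raised to the $(2q-1)$-th power, and simplified so that the final exponent reduces to the stated $\alpha(4q-1-\tfrac1{2\rho})$ for the $L^{2\rho}$-bound and to $(4q-\tfrac32)\alpha$ (the $\rho=1$ instance) for the $V$- and $\dot H^\gamma$-bounds; strictly smaller exponents like $\alpha(4q-\tfrac52)$ arising from the $L^{4q-2}$-piece are absorbed into $1+(\beta\tau^\theta)^{(4q-3/2)\alpha}$. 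A subsidiary point is verifying uniform $V$- and $\dot H^\gamma$-moments of the damped stochastic convolution $\widetilde{\mathcal O}_t$, which nevertheless reduces to Lemma \ref{lem:O_bound[TV4AC-25]} via the elementary inequality $e^{-\epsilon^{-1}(t-s)}\le 1$.
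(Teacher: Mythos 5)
Your proposal is correct and follows essentially the same route as the paper's proof: the local one-step representation of $X^\tau(t)$ combined with Theorem \ref{thm:X^tau-bound[TV4AC-25]} and the absorption $\tau\epsilon^{-1}\leq C(\beta\tau^\theta)^\alpha$ for the $L^{2\rho}$-bound, and the rewriting of \eqref{eq:dX^tau(t)[TV4AC-25]} with the damped semigroup generated by $A+\epsilon^{-1}I$ together with the integral estimate $\epsilon^{-1}\int_0^t e^{-\epsilon^{-1}(t-s)}(1\wedge(t-s))^{-1/4}\,\mathrm{d}s\leq C\epsilon^{-1/4}$ (and its $\gamma/2$ analogue) for the $V$- and $\dot H^\gamma$-bounds. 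Your exponent bookkeeping, including the identity $\alpha+\alpha(2q-1)\bigl(2-\tfrac{1}{2\rho(2q-1)}\bigr)=\alpha\bigl(4q-1-\tfrac{1}{2\rho}\bigr)$ and the absorption of the smaller exponent $\alpha(4q-\tfrac52)$, matches the paper's computation.
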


\begin{proof}
We split the proof into three steps.

\noindent\textbf{Step~1: $L^{2\rho}$-estimate.}

Note that for any $ t \geq 0$,
\begin{equation}
    X^\tau(t)
    =
    E( t -  \lfloor t \rfloor_\tau)
    X^\tau(\lfloor t \rfloor_\tau)
    +
    (t -  \lfloor t \rfloor_\tau)
    E( t -  \lfloor t \rfloor_\tau)
    \epsilon^{-1}
    F_\tau
    ( X^\tau( \lfloor t \rfloor_\tau ) )
    +
    \int_{ \lfloor t \rfloor_\tau }^t
     E( t -  s)
     \,
     \mathrm{d}W(s)
    .
\end{equation}
By the contractivity 
$\|E(t)u\|_{L^{2\rho}}\le \|u\|_{L^{2\rho}}$, $u\in L^{2\rho}(\mathcal{D})$, $t\ge 0$,
it follows from Lemma \ref{lem:O_bound[TV4AC-25]}, Theorem \ref{thm:X^tau-bound[TV4AC-25]} and
condition
\eqref{eq:epsilon_condition_f_tau[TV4AC-25]}
that
\begin{equation}
\begin{aligned}
    \|  X^\tau(t) \|_{L^p( \Omega; L^{2\rho} )}
&\leq
     \|  X^\tau(\lfloor t \rfloor_\tau) \|_{L^p( \Omega; L^{2\rho} )}
     +
     \tau
     \epsilon^{-1}
     \|  X^\tau(\lfloor t \rfloor_\tau) \|^{2q-1}_{L^{p(2q-1)}( \Omega; L^{2\rho(2q-1)} )}
     +
     C(p)
\\
&\leq
     C
     \beta^\alpha
     \tau^{\theta \alpha}
 \|  X_0 \|^{2q-1}_{L^{p(2q-1)}( \Omega; L^{2\rho(2q-1)} )}
+
C(p,\rho, q, \alpha)
\cdot
\big(
1
+
(
\beta
\tau^\theta
)^{ (4q-1-\frac{1}{2\rho}) \alpha }
\big).
\end{aligned}
\end{equation}
%
%

\noindent\textbf{Step~2: $V$-estimate.}

First, one 
rewrites
\eqref{eq:dX^tau(t)[TV4AC-25]}
as: 
for $t>0$,
\begin{equation}
\mathrm{d}X^\tau(t)
    =
    -\big(A+\epsilon^{-1}I\big) X^\tau(t)\,\mathrm{d}t
    +
    \epsilon^{-1}\!\left[
    E\big(t - \lfloor t \rfloor_\tau\big)
    F_\tau\big(X^\tau(\lfloor t \rfloor_\tau)\big)
    +
    X^\tau(t)
    \right]\mathrm{d}t
    +
    \mathrm{d}W(t).
\end{equation}
In view of the property
\eqref{eq:E(t)_semigroup_property_V[TV4AC-25]},
we then arrive at
\begin{equation}
\begin{aligned}
  &
    \| X^\tau(t) \|_{L^p(\Omega;V)}
   \leq
    e^{-\epsilon^{-1}t}
    \| E(t) X^\tau_0 \|_{L^p(\Omega;V)}
    +
    \bigg\| 
    \int_0^t 
    e^{-\epsilon^{-1} (t-s) }
    \epsilon^{-1}
    E(t-\lfloor s \rfloor_\tau) 
   F_\tau(  X^\tau(\lfloor s \rfloor_\tau) )
    \,\mathrm{d}s
    \bigg\|_{L^p(\Omega;V)}
\\&\qquad
   +
    \bigg\| 
    \int_0^t 
    e^{-\epsilon^{-1} (t-s) }
    E(t-s) 
    \epsilon^{-1}
    X^\tau(s)
     \,\mathrm{d}s
    \bigg\|_{L^p(\Omega;V)}
    +
    \bigg\| 
    \int_0^t 
    e^{-\epsilon^{-1} (t-s) }
    E(t-s)
    \,
    \mathrm{d}W(s)
    \bigg\|_{L^p(\Omega;V)}
\\
   & \leq
   e^{-\epsilon^{-1}t}
    \| X^\tau_0 \|_{L^p(\Omega;V)}
    +
    C \epsilon^{-1}
    \int_0^t
    e^{-\epsilon^{-1} (t-s) }
    (t-s)^{-\frac14}
   \Big( 1 
   +
   \sup_{r \in [0,t]}
     \|
     X^\tau( \lfloor r \rfloor_\tau )
     \|^{2q-1}_{L^{(2q-1)p}(\Omega;L^{4q-2})} 
   \Big)
    \,
     \mathrm{d}s
\\
&\qquad
    +
    C \epsilon^{-1}
    \int_0^t
    e^{-\epsilon^{-1} (t-s) }
    (t-s)^{-\frac14}
   \Big( 1 
   +
   \sup_{r \in [0,t]}
     \|X^\tau(r)\|_{L^{p}(\Omega;H)} 
   \Big)
    \,
     \mathrm{d}s
    +
    C 
    \sup_{s \in [0,t]}
     \|\mathcal{O}_s\|_{L^{p}(\Omega;V)}
\\
   & \leq
   e^{-\epsilon^{-1}t}
    \| X^\tau_0 \|_{L^p(\Omega;V)}
    +
    C(p)
    \\
    &
    \qquad
    +
    C( p, q , \alpha)
    \epsilon^{-1}
    \int_0^t
    e^{-\epsilon^{-1} (t-s) }
    (t-s)^{-\frac14}
   \Big(
   1 
   +
     (\beta
     \tau^{\theta}
     )^\alpha
     \|
     X^\tau_0
     \|^{2q-1}_{L^{(2q-1)p}(\Omega;L^{4q-2})}  
   +
   (
   \beta
   \tau^\theta
   )^{(4q-\frac32)\alpha}
   \Big)
     \,
     \mathrm{d}s
\\
   & \leq
   e^{-\epsilon^{-1}t}
    \|X_0 \|_{L^p(\Omega;V)}
   +
   C(p,  q, \alpha)
   \epsilon^{-\frac14}
   \Big(
   1
   +
     (\beta
     \tau^{\theta}
     )^\alpha
   \|
     X_0
     \|^{2q-1}_{L^{(2q-1)p}(\Omega;L^{4q-2})} 
    +
      (
   \beta
   \tau^\theta
   )^{(4q-\frac32)\alpha} 
\Big)
    +
    C(p),
\end{aligned}
\end{equation}
where we also used the fact
$\epsilon^{-1}
    \int_0^t
    e^{-\epsilon^{-1} (t-s) }
  ( 1 \wedge (t-s) )^{-\frac14}
     \,
     \mathrm{d}s
\leq C \epsilon^{-\frac14}
$.

%
%
%

\noindent\textbf{Step~3: $\dot{H}^\gamma$-estimate.}

The argument is similar to \textbf{Step 2},
except that we use
$
    \|E(t)x\|_{\dot{H}^\gamma} \le C\,t^{-\gamma/2}e^{-ct}\|x\|, t>0,x\in H,
$
which follows from \eqref{eq:E(t)_semigroup_property[TV4AC-25]},
to obtain
\begin{equation}
\begin{aligned}
  &
    \| X^\tau(t) \|_{L^p(\Omega;\dot{H}^\gamma)}
   \leq
    e^{-\epsilon^{-1}t}
    \| E(t) X^\tau_0 \|_{L^p(\Omega;\dot{H}^\gamma)}
    +
    \bigg\| 
    \int_0^t 
    e^{-\epsilon^{-1} (t-s) }
    \epsilon^{-1}
    E(t-\lfloor s \rfloor_\tau) 
   F_\tau(  X^\tau(\lfloor s \rfloor_\tau) )
    \,\mathrm{d}s
    \bigg\|_{L^p(\Omega;\dot{H}^\gamma)}
\\&\qquad
   +
    \bigg\| 
    \int_0^t 
    e^{-\epsilon^{-1} (t-s) }
    E(t-s) 
    \epsilon^{-1}
    X^\tau(s)
     \,\mathrm{d}s
    \bigg\|_{L^p(\Omega;\dot{H}^\gamma)}
    +
    \bigg\| 
    \int_0^t 
    e^{-\epsilon^{-1} (t-s) }
    E(t-s)
    \,
    \mathrm{d}W(s)
    \bigg\|_{L^p(\Omega;\dot{H}^\gamma)}
\\
   & \leq
   e^{-\epsilon^{-1}t}
    \| X^\tau_0 \|_{L^p(\Omega;\dot{H}^\gamma)}
    +
    C \epsilon^{-1}
    \int_0^t
    e^{-\epsilon^{-1} (t-s) }
    (t-s)^{-\frac{\gamma}{2} }
   \Big( 1 
   +
   \sup_{r \in [0,t]}
     \|
     X^\tau( \lfloor r \rfloor_\tau )
     \|^{2q-1}_{L^{(2q-1)p}(\Omega;L^{4q-2})} 
   \Big)
    \,
     \mathrm{d}s
\\
&\qquad
    +
    C \epsilon^{-1}
    \int_0^t
    e^{-\epsilon^{-1} (t-s) }
    (t-s)^{-\frac{\gamma}{2}}
   \Big( 1 
   +
   \sup_{r \in [0,t]}
     \|X^\tau(r)\|_{L^{p}(\Omega;H)} 
   \Big)
    \,
     \mathrm{d}s
    +
    C 
    \sup_{s \in [0,t]}
     \|\mathcal{O}_s\|_{L^{p}(\Omega;\dot{H}^\gamma)}
\\
   & \leq
   e^{-\epsilon^{-1}t}
    \|X_0 \|_{L^p(\Omega;\dot{H}^\gamma)}
   +
   C(p,  q,\alpha, \gamma)
   \epsilon^{-\frac{\gamma}{2}}
   \Big(
   1
   +
     (\beta
     \tau^{\theta}
     )^\alpha
   \|
     X_0
     \|^{2q-1}_{L^{(2q-1)p}(\Omega;L^{4q-2})} 
    +
      (
   \beta
   \tau^\theta
   )^{(4q-\frac32)\alpha} 
\Big)
    +
    C(p, \gamma),
\end{aligned}
\end{equation}
The proof is thus completed.
%
%
\end{proof}

Next we present its H\"older regularity property in negative Sobolev spaces as follows.

\begin{lem}\label{lem:X^tau-holder-conti[TV4AC-25]}
     Let Assumptions \ref{assump:A(linear_operator)[TV4AC-25]}-\ref{assump:X_0(Initial Value)[TV4AC-25]} and the condition \eqref{eq:epsilon_condition_f_tau[TV4AC-25]}
     hold. Let $X^{\tau}(t), t \geq 0$ be defined by \eqref{eq:time_scheme_continuous_version[TV4AC-25]}. Then for any $p \in [2, \infty)$, $\eta \in [0, \frac12]$, $\gamma<\frac12$ and $t>0$ there exists a constant $C(X_0, \alpha, p,q, \eta, \gamma) > 0$ independent of $\tau$ and $\beta$
     such that 
     \begin{equation}
         \left\|
         X^{\tau}(t) - X^{\tau}( \lfloor t \rfloor_\tau ) \right\|_{L^p(\Omega;\dot{H}^{-\eta} )}
         \leq
         C(X_0, \alpha,  p, q, \eta, \gamma) 
         \epsilon^{- \frac{\gamma+\eta}{2} } 
         \tau^{ \frac{\gamma+\eta}{2} }
         \big(
         1
         +
        (\beta
     \tau^{\theta}
     )^{
         (4q- \frac32)\alpha
         }
         \big).
     \end{equation}
\end{lem}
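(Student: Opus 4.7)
The plan is to decompose $X^{\tau}(t)-X^{\tau}(\lfloor t\rfloor_\tau)$ by applying variation of constants to the evolution \eqref{eq:dX^tau(t)[TV4AC-25]} on the subinterval $[\lfloor t\rfloor_\tau,t]$. Setting $h:=t-\lfloor t\rfloor_\tau\in[0,\tau)$ and using the semigroup identity $E(t-s)E(s-\lfloor t\rfloor_\tau)=E(h)$, this yields
\begin{equation*}
  X^{\tau}(t)-X^{\tau}(\lfloor t\rfloor_\tau)
  = (E(h)-I)\,X^{\tau}(\lfloor t\rfloor_\tau)
  + h\,E(h)\,\epsilon^{-1}F_\tau(X^{\tau}(\lfloor t\rfloor_\tau))
  + \int_{\lfloor t\rfloor_\tau}^{t} E(t-s)\,\mathrm{d}W(s).
\end{equation*}
I would then estimate the three summands separately in $L^p(\Omega;\dot{H}^{-\eta})$, combining the analytic semigroup estimates \eqref{eq:E(t)_semigroup_property[TV4AC-25]}, the uniform-in-time moment bounds for $X^\tau$ from Proposition \ref{prop:X^tau(t)-bound[TV4AC-25]}, and the step-size condition \eqref{eq:epsilon_condition_f_tau[TV4AC-25]}.

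For the semigroup-difference term, the estimate $\|A^{-(\gamma+\eta)/2}(E(h)-I)\|_{\mathcal{L}(H)}\leq C h^{(\gamma+\eta)/2}$ (valid since $\gamma+\eta<1$) together with the $\dot{H}^\gamma$-moment bound in Proposition \ref{prop:X^tau(t)-bound[TV4AC-25]} yields a contribution of order $\tau^{(\gamma+\eta)/2}\epsilon^{-\gamma/2}(1+(\beta\tau^\theta)^{(4q-3/2)\alpha})$; since $\epsilon\in(0,1]$ and $\eta\geq 0$, the factor $\epsilon^{-\gamma/2}$ is dominated by $\epsilon^{-(\gamma+\eta)/2}$, matching the claim. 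For the stochastic convolution, the Hilbert-space Burkholder inequality together with \eqref{eq:A_HS-norm_gamma[TV4AC-25]} and the eigenvalue asymptotics $\lambda_j\sim j^2$ yields the Hilbert--Schmidt kernel estimate
\begin{equation*}
  \|A^{-\eta/2}E(r)\|_{\mathcal{L}_2(H)}^{2}
  = \sum_{j\in\N}\lambda_j^{-\eta}e^{-2\lambda_j r}
  \leq C\,r^{-(1-\gamma-\eta)},\qquad r>0,
\end{equation*}
for any $\gamma<\tfrac{1}{2}$; integrating over $[\lfloor t\rfloor_\tau,t]$ produces an $\epsilon$-independent bound of order $C\tau^{(\gamma+\eta)/2}$, again dominated by the target.

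The main obstacle is the drift term $hE(h)\epsilon^{-1}F_\tau(X^{\tau}(\lfloor t\rfloor_\tau))$, where the full factor $\epsilon^{-1}$ must be traded for the milder $\epsilon^{-(\gamma+\eta)/2}$ at the price of a polynomial in $(\beta\tau^\theta)^\alpha$. Since $\eta\geq 0$, I would first use $\|A^{-\eta/2}E(h)z\|\leq C\|z\|$ and combine \eqref{eq:|F_tau(u)|[TV4AC-25]} with \eqref{eq:asuume_|f(u)|[TV4AC-25]} to control $\|F_\tau(X^{\tau}(\lfloor t\rfloor_\tau))\|_{L^p(\Omega;H)}$ by $C(1+\|X^{\tau}(\lfloor t\rfloor_\tau)\|^{2q-1}_{L^{p(2q-1)}(\Omega;L^{4q-2})})$, which, by the $L^{2\rho}$-part of Proposition \ref{prop:X^tau(t)-bound[TV4AC-25]} applied with $\rho=2q-1$, is a polynomial in $(\beta\tau^\theta)^\alpha$ of admissible degree. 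The key interpolation step is then the identity
\begin{equation*}
  h\,\epsilon^{-1}
  = h^{(\gamma+\eta)/2}\,\epsilon^{-(\gamma+\eta)/2}\cdot(h/\epsilon)^{1-(\gamma+\eta)/2},
\end{equation*}
to which the step-size condition \eqref{eq:epsilon_condition_f_tau[TV4AC-25]}, rewritten as $\tau/\epsilon\leq C(\beta\tau^\theta)^\alpha$, is applied to conclude $(h/\epsilon)^{1-(\gamma+\eta)/2}\leq C(\beta\tau^\theta)^{\alpha(1-(\gamma+\eta)/2)}$; because $(\gamma+\eta)/2\leq\tfrac{1}{2}$, this exponent is positive, so the manipulation is legitimate. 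Collecting the three bounds and absorbing all polynomial factors into $1+(\beta\tau^\theta)^{(4q-3/2)\alpha}$ will complete the proof, with constants that depend on $X_0$ only through the moment bounds invoked along the way.
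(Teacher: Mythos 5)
Your proposal is correct and follows essentially the same route as the paper's proof: the same three-term decomposition of $X^{\tau}(t)-X^{\tau}(\lfloor t\rfloor_\tau)$, the same estimate of $\|A^{-\frac{\gamma+\eta}{2}}(E(h)-I)\|_{\mathcal{L}(H)}$ combined with the $\dot{H}^{\gamma}$ moment bound of Proposition \ref{prop:X^tau(t)-bound[TV4AC-25]}, and the same key factorization $h\epsilon^{-1}=h^{\frac{\gamma+\eta}{2}}\epsilon^{-\frac{\gamma+\eta}{2}}(h/\epsilon)^{1-\frac{\gamma+\eta}{2}}$ together with condition \eqref{eq:epsilon_condition_f_tau[TV4AC-25]} for the drift term. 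The only cosmetic difference is that you bound the stochastic convolution by a direct Hilbert--Schmidt computation where the paper simply cites \cite[(4.57)]{wang2020efficient}.
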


\begin{proof}
    Note that for any $ t \geq 0$,
\begin{equation}
\begin{split}
&    X^\tau(t)
    - 
    X^\tau(\lfloor t \rfloor_\tau)
    \\
&    =
   (
   E( t -  \lfloor t \rfloor_\tau) 
   -
   I
   )
    X^\tau(\lfloor t \rfloor_\tau)
    +
    (t -  \lfloor t \rfloor_\tau)
    E( t -  \lfloor t \rfloor_\tau)
    \epsilon^{-1}
    F_\tau
    ( X^\tau( \lfloor t \rfloor_\tau ) )
    +
    \int_{ \lfloor t \rfloor_\tau }^t
     E( t -  s)
     \,
     \mathrm{d}W(s)
    .
\end{split}
\end{equation}
It is clear from  \cite[(4.57)]{wang2020efficient} that
\begin{equation}
       \left\|
      \int_{ \lfloor t \rfloor_\tau }^t
     E( t -  s)
     \,
     \mathrm{d}W(s)
     \right\|_{L^p(\Omega; \dot{H}^{-\eta} )}
         \leq
         C
         \tau^{ \frac{\gamma+\eta}{2} }.
\end{equation}
Moreover, using the property \eqref{eq:E(t)_semigroup_property[TV4AC-25]} and Proposition \ref{prop:X^tau(t)-bound[TV4AC-25]} ensures
 \begin{equation}
 \begin{aligned}
&
    \left\|
   (
   E( t -  \lfloor t \rfloor_\tau) 
   -
   I
   )
    X^\tau(\lfloor t \rfloor_\tau)
     \right\|_{L^p(\Omega; \dot{H}^{-\eta} )}
\\
&
\leq
   \|
   A^{ -\frac{\gamma+\eta}{2} }
   (
   E( t -  \lfloor t \rfloor_\tau) 
   -
   I
   )
   \|_{\mathcal{L}(H)}
    \left\|
    X^\tau(\lfloor t \rfloor_\tau)
     \right\|_{L^p(\Omega; \dot{H}^{\gamma} )}
\\
&
\leq
         C
         \epsilon^{ - \frac{\gamma}{2}  }
         \tau^{ \frac{\gamma+\eta}{2} }
         \big(
          1
          +
         \|X_0 \|_{L^p(\Omega;\dot{H}^\gamma)}
           +
           (\beta
     \tau^{\theta}
     )^\alpha
        \|
         X_0
        \|^{2q-1}_{L^{(2q-1)p}(\Omega;L^{4q-2})} 
        +
    (
   \beta
   \tau^\theta
   )^{\alpha(4q-\frac32)} 
         \big).
 \end{aligned}
\end{equation}
Armed with condition \eqref{eq:epsilon_condition_f_tau[TV4AC-25]}, 
one further employs
 Proposition \ref{prop:F_tau[TV4AC-25]} and Theorem \ref{thm:X^tau-bound[TV4AC-25]} to show
\begin{equation}
\begin{aligned}
&
\left\|
        (t -  \lfloor t \rfloor_\tau)
    E( t -  \lfloor t \rfloor_\tau)
    \epsilon^{-1}
    F_\tau
    ( X^\tau( \lfloor t \rfloor_\tau ) )
\right\|_{L^p(\Omega; \dot{H}^{-\eta} )}
\\
&
\leq
\left\|
\tau^{ \frac{\gamma+\eta}{2} }
(\tau \epsilon^{-1})^{ 1-\frac{\gamma+\eta}{2} }
\epsilon^{-\frac{\gamma+\eta}{2}}
    F_\tau
    ( X^\tau( \lfloor t \rfloor_\tau ) )
\right\|_{L^p(\Omega; H )}
\\
&
\leq
C
\tau^{ \frac{\gamma+\eta}{2} }
\epsilon^{-\frac{\gamma+\eta}{2}}
\big(
1
+
(
\beta
\tau^\theta
)^{\alpha ( 1 - \frac{\gamma+\eta}{2} ) }
\|X_0\|^{2q-1}_{L^{(2q-1)p}(\Omega; L^{4q-2} )} 
+
(
\beta
\tau^\theta
)^{ \alpha  (4q- \frac32 - \frac{\gamma+\eta}{2}) } 
\big).
\end{aligned}
\end{equation}
The desired result then follows immediately from Assumption \ref{assump:X_0(Initial Value)[TV4AC-25]}.
\end{proof}

The subsequent lemma reveals the commutativity properties of the nonlinearity,
which is a straightforward extension of \cite[Lemma~4.9]{wang2020efficient} for $q=2$. 

\begin{lem}
    Let the nonlinear operator $F\colon L^{4q-2}(\mathcal{D}) \rightarrow H, q > 1$ satisfy Assumption \ref{assump:F(Nonlinearity)[TV4AC-25]}. Then for any $\varsigma \in (0, \frac12)$, $\eta > \frac12$, and $ u, v \in V \cap \dot{H}^{\varsigma}$,  there exists a constant $C(\varsigma, \eta, q)  >0$ depending on $\varsigma, \eta, q$, such that 
\begin{align}
\label{eq:|F(u)-F(v)|(negativeSobolev)[TV4AC-25]}
   \| F'(u) v \|_{ -\eta } 
   &\leq
   C(\varsigma, \eta, q)
   \Big( 1 
              + 
              \max 
              \{
              \|u\|_V 
              ,
              \|u\|_{\varsigma}
              \}^{2q-2}
        \Big) \|  v \|_{-\varsigma} .
\end{align}
\end{lem}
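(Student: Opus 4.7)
The plan is to reduce the negative-norm estimate to a positive-norm product estimate via duality, and then invoke a Leibniz-type inequality in fractional Sobolev spaces together with the chain rule for the Nemytskii operator associated with $f'$.

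First, since $(F'(u)v)(x)=f'(u(x))v(x)$ is a pointwise multiplication, it is symmetric in the $H$ duality. Hence, by definition of $\dot{H}^{-\eta}$ as the dual of $\dot{H}^{\eta}$,
\begin{equation*}
\|F'(u)v\|_{-\eta}
=\sup_{\phi\in\dot{H}^{\eta},\ \|\phi\|_{\eta}\leq 1}
\langle F'(u)v,\phi\rangle
=\sup_{\phi\in\dot{H}^{\eta},\ \|\phi\|_{\eta}\leq 1}
\langle v,\,f'(u)\phi\rangle
\leq \|v\|_{-\varsigma}\sup_{\|\phi\|_{\eta}\leq 1}\|f'(u)\phi\|_{\varsigma}.
\end{equation*}
So it suffices to bound $\|f'(u)\phi\|_{\varsigma}$ uniformly in $\phi$ with $\|\phi\|_{\eta}\leq 1$ by the quantity $C(1+\max\{\|u\|_V,\|u\|_{\varsigma}\}^{2q-2})$.

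Second, since $\varsigma\in(0,\tfrac12)$, the norm $\|\cdot\|_{\varsigma}$ on $\mathcal{D}=(0,1)$ is equivalent to the Slobodeckij (Gagliardo) seminorm. Writing $(gh)(x)-(gh)(y)=g(x)[h(x)-h(y)]+h(y)[g(x)-g(y)]$ and applying the triangle inequality under the Slobodeckij integral yields the product estimate
\begin{equation*}
\|gh\|_{\varsigma}\leq C(\varsigma)\bigl(\|g\|_V\|h\|_{\varsigma}+\|h\|_V\|g\|_{\varsigma}\bigr).
\end{equation*}
Applying this with $g=f'(u)$ and $h=\phi$, and using the 1D Sobolev embedding $\dot{H}^{\eta}\hookrightarrow V$ (valid since $\eta>\tfrac12$) together with $\dot{H}^{\eta}\hookrightarrow\dot{H}^{\varsigma}$ (since $\varsigma<\tfrac12<\eta$) to absorb $\|\phi\|_V+\|\phi\|_{\varsigma}\leq C(\eta)\|\phi\|_{\eta}\leq C(\eta)$, I get
\begin{equation*}
\|f'(u)\phi\|_{\varsigma}\leq C(\varsigma,\eta)\bigl(\|f'(u)\|_V+\|f'(u)\|_{\varsigma}\bigr).
\end{equation*}

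Third, I bound the two factors on the right using Assumption \ref{assump:F(Nonlinearity)[TV4AC-25]}. From $|f'(v)|\leq C(1+|v|^{2q-2})$ one immediately obtains
\begin{equation*}
\|f'(u)\|_V\leq C(q)\bigl(1+\|u\|_V^{2q-2}\bigr).
\end{equation*}
For $\|f'(u)\|_{\varsigma}$, the bound $|f''(v)|\leq C(1+|v|^{2q-3})$ combined with the mean value theorem gives
\begin{equation*}
|f'(u(x))-f'(u(y))|\leq C(q)\bigl(1+\|u\|_V^{2q-3}\bigr)|u(x)-u(y)|,
\end{equation*}
and inserting this into the Slobodeckij integral yields $\|f'(u)\|_{\varsigma}\leq C(\varsigma,q)(1+\|u\|_V^{2q-3})\|u\|_{\varsigma}$. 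Using the trivial bound $ab^{2q-3}\leq\max\{a,b\}^{2q-2}$ for $a,b\geq 0$ to combine the two contributions, I conclude
\begin{equation*}
\|f'(u)\phi\|_{\varsigma}\leq C(\varsigma,\eta,q)\bigl(1+\max\{\|u\|_V,\|u\|_{\varsigma}\}^{2q-2}\bigr),
\end{equation*}
which together with the duality step of the first paragraph gives the claim.

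The main technical point is the fractional Leibniz inequality together with the chain-rule-type estimate for $\|f'(u)\|_{\varsigma}$; both are standard for $\varsigma<\tfrac12$ thanks to the Slobodeckij characterization, so no deep harmonic analysis is required. An alternative route, should the Slobodeckij computation be considered unpalatable, is to interpolate the trivial $L^\infty$–$L^2$ product estimate at level $\varsigma=0$ against the standard $H^1$ product estimate at level $\varsigma=1$; this gives the same inequality with constant depending on $\varsigma$ but avoids integral manipulations entirely.
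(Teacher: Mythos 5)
Your proof is correct and follows essentially the same route as the paper's (the paper defers to a straightforward extension of \cite[Lemma~4.9]{wang2020efficient}, whose argument is exactly your combination of duality between $\dot{H}^{-\eta}$ and $\dot{H}^{\eta}$, the Slobodeckij product estimate for $\varsigma<\tfrac12$, and the growth bounds on $f'$ and $f''$ from Assumption \ref{assump:F(Nonlinearity)[TV4AC-25]}). The only cosmetic difference is that the paper takes the supremum over $\varphi\in H$ with $\|\varphi\|\le 1$ and works with $A^{-\eta/2}\varphi$, whereas you take it directly over $\|\phi\|_{\eta}\le 1$; these are equivalent.
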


Equipped with the above
preparations,
we are now in a position to establish the convergence rate of the proposed time-stepping scheme in TV distance.

\begin{thm}[Convergence rate of the time-stepping scheme]\label{Thm_main_sharpinterface[TV4AC-25]}

Under Assumptions \ref{assump:A(linear_operator)[TV4AC-25]}-\ref{assump:X_0(Initial Value)[TV4AC-25]}, let $X(t), t\geq 0$ be the mild solution of \eqref{eq:spde[TV4AC-25]} and $X^{\tau}_{t_m}, m \in \N$ be the time-stepping scheme defined by \eqref{eq:time_discretization[TV4AC-25]} with condition \eqref{eq:epsilon_condition_f_tau[TV4AC-25]} fulfilled. 
Then for any $\gamma < \frac{1}{2} $, $m \in \N$, there exists a positive constant $ C( X_0 ,q, \gamma, \alpha)$ independent of $\tau, \epsilon$ and $\beta$
such that 
\begin{equation}
    d_{\mathrm{TV}}
    (
     \mathrm{law}( X^{\tau}_{t_m} )
     ,
     \mathrm{law} (X(t_m) )
    )
    \leq
     C( X_0 ,q, \gamma, \alpha)
     \beta
     \big(
     1
     +
     (
     \beta
     \tau^\theta
     )^{\kappa_0}
     \big)
     \min \! \big \{
     \exp( \epsilon^{-2} )
     ,
     1 + t_m
     \big \}
     \epsilon^{ - \frac{q+3+2\gamma}{2} }
     \tau^{ \gamma \wedge \theta },
\end{equation}
where 
$\kappa_0 
:=
\max\{
(4q-4)
+
\alpha ( 8 q^2 - 6 q -1 )
,
\alpha ( 16 q^2 - 17 q + 3 )
\}
$.
%
%
%
In particular,
by taking $\beta^\alpha = \epsilon^{-1}$ and $\theta =\tfrac12$ and thus
replacing the strict condition \eqref{eq:epsilon_condition_f_tau[TV4AC-25]} with
the $\epsilon$-independent restriction $2
    c_3
    ^2 
    \tau^{ 1-\theta\alpha } 
    \leq
    c_0
    $, one obtains
\begin{equation}
    d_{\mathrm{TV}}
    (
     \mathrm{law}( X^{\tau}_{t_m} )
     ,
     \mathrm{law} (X(t_m) )
    )
    \leq
     C( X_0 ,q, \gamma, \alpha)
     (
     1 + t_m
     )
     \epsilon^{ 
     - \frac{q+3+2\gamma }
     {2}
     -
     \frac{1 + \kappa_0}
     {\alpha}
     }
     \tau^{ \gamma }.
\end{equation}

\end{thm}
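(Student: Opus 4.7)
The plan is to pass through the auxiliary process $\mathbb{X}^\delta$ defined in \eqref{eq:spde_auxiliary[TV4AC-25]} with $\mathbb{X}^\delta_0 = X_0$, invoking the decomposition \eqref{eq:error-decomposition-introduction}. By the equivalent formulation \eqref{tvDistance[TV4AC-25]} of the TV distance, it suffices to bound $|\mathbb{E}[\varphi(X(t_m))] - \mathbb{E}[\varphi(X^\tau_{t_m})]| \leq \text{Error}_1 + \text{Error}_2$ uniformly for all $\varphi \in \mathcal{C}^0_b(H)$ with $\|\varphi\|_0 \leq 1$, and eventually pass $\delta \to 0$ so that any $\delta$-dependent contribution disappears. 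The parameter $\delta$ plays an auxiliary role only, which justifies using the rough bound of Lemma \ref{lem:markov_exponential_estimate_exp[TV4AC-25]} whenever it is multiplied by a positive power of $\delta$.

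The first term $\text{Error}_1$ is handled immediately by Proposition \ref{prop:error_of_X^delta-X[TV4AC-25]}, which gives a bound of order $\epsilon^{-1} e^{\epsilon^{-1}\widetilde{L}_f t_m} (1+\|X_0\|_V^{4q-3}) \sqrt{\delta}$; although this is exponentially large in $\epsilon^{-1}$ and $t_m$, it vanishes as $\delta \to 0$. The bulk of the work is therefore concentrated on $\text{Error}_2$. The plan for $\text{Error}_2$ is to apply the It\^o formula to $s \mapsto \nu^\delta(t_m - s, X^\tau(s))$ on $[0, t_m]$, where $X^\tau(s)$ is the continuous extension \eqref{eq:time_scheme_continuous_version[TV4AC-25]} satisfying \eqref{eq:dX^tau(t)[TV4AC-25]}. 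Using that $\nu^\delta$ solves the Kolmogorov equation \eqref{eq:Kolmogorov.nu[TV4AC-25]} and cancelling the second-order and Laplacian terms, the weak error reduces to
\begin{equation*}
\text{Error}_2 = \Big|\int_0^{t_m} \mathbb{E}\big[D\nu^\delta(t_m-s, X^\tau(s)).\epsilon^{-1}\big(E(s-\lfloor s\rfloor_\tau)F_\tau(X^\tau_{\lfloor s\rfloor_\tau}) - F_\delta(X^\tau(s))\big)\big] \, \mathrm{d}s\Big|.
\end{equation*}
I then split the integrand into four pieces: (i) $(E(s-\lfloor s\rfloor_\tau)-I)F_\tau(X^\tau_{\lfloor s\rfloor_\tau})$, controlled by \eqref{eq:E(t)_semigroup_property[TV4AC-25]} at the price of $\tau^\gamma$; (ii) $F_\tau(X^\tau_{\lfloor s\rfloor_\tau}) - F(X^\tau_{\lfloor s\rfloor_\tau})$, controlled by the taming error \eqref{eq:|F_tau(u)-F(u)|[TV4AC-25]} of Proposition \ref{prop:F_tau[TV4AC-25]} which carries the prefactor $\beta\tau^\theta$; (iii) $F(X^\tau_{\lfloor s\rfloor_\tau}) - F(X^\tau(s))$, controlled through the commutativity estimate \eqref{eq:|F(u)-F(v)|(negativeSobolev)[TV4AC-25]} combined with the time-H\"older regularity in negative Sobolev norms from Lemma \ref{lem:X^tau-holder-conti[TV4AC-25]}; (iv) $F(X^\tau(s)) - F_\delta(X^\tau(s))$, which is $O(\sqrt{\delta})$ and disappears in the limit. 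Crucially, for pieces (i) and (iii) I pair the negative-norm estimate with the refined derivative bound in Proposition \ref{prop:markov_exponential_estimate_pol[TV4AC-25]} specialized to $\vartheta = \eta + \gamma$ slightly less than $3/4$; this is what converts the exponential in $\epsilon^{-1}$ into a polynomial of degree roughly $\tfrac12 + \vartheta$, while the $\delta\sqrt{\delta}$ piece in Proposition \ref{prop:markov_exponential_estimate_pol[TV4AC-25]} is controlled by Lemma \ref{lem:X^delta-Bound[TV4AC-25]} and then killed by sending $\delta \to 0$. Accumulating these contributions, the temporal integral of the prefactor $e^{-r(t_m-s)}(1+(t_m-s)^{-1/2-\vartheta})$ produces a factor $\min\{r^{-1}, 1+t_m\}$; noting $r^{-1} \lesssim \exp(\epsilon^{-2})$ from Proposition \ref{prop:V-uniform_ergodicity_X(t)[TV4AC-25]} delivers the stated $\min\{\exp(\epsilon^{-2}), 1+t_m\}$. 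All moment bounds of $X^\tau$ and $\mathbb{X}^\delta$ appearing in these estimates are provided uniformly in time and $\epsilon$ by Theorem \ref{thm:X^tau-bound[TV4AC-25]}, Proposition \ref{prop:X^tau(t)-bound[TV4AC-25]} and Lemma \ref{lem:X^delta-Bound[TV4AC-25]}, with the polynomial-in-$\beta$ dependence giving rise to the exponent $\kappa_0$.

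The main obstacle, as I see it, is piece (iii): bounding the weak increment between the two nearby time points while retaining the sharp rate $\tau^\gamma$ (rather than $\tau^{\gamma/2}$) under only $\mathcal{C}^0_b$ test functions. The idea is to avoid any strong error of $X^\tau$ and instead move one factor of $A^{-\vartheta}$ onto $D\nu^\delta$ via \eqref{eq:|F(u)-F(v)|(negativeSobolev)[TV4AC-25]}, so that what has to be estimated in expectation is the time-H\"older seminorm of $X^\tau$ in $\dot H^{-\eta}$; the matching Proposition \ref{prop:markov_exponential_estimate_pol[TV4AC-25]} bound on $D\nu^\delta$ in the dual $\dot H^{\vartheta}$ norm yields a factor $(s \wedge \epsilon)^{-1/2-\vartheta}$ whose temporal integrability at the left endpoint is preserved precisely because $\vartheta < 3/4$. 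Once this sharp piecewise estimate is in hand, the remaining ingredients are bookkeeping, and a union bound over the four pieces, letting $\delta \downarrow 0$, yields the main estimate. Finally, for the special case $\beta^\alpha = \epsilon^{-1}$ and $\theta = \tfrac12$, I substitute into the general bound: the product $\beta(1+(\beta\tau^\theta)^{\kappa_0})$ becomes $\epsilon^{-(1+\kappa_0)/\alpha}$ times a $\tau^\gamma$-compatible power, and condition \eqref{eq:epsilon_condition_f_tau[TV4AC-25]} collapses to the $\epsilon$-independent restriction $2c_3^2\tau^{1-\theta\alpha}\leq c_0$, giving the second displayed bound with the displayed exponent $\tfrac{q+3+2\gamma}{2} + \tfrac{1+\kappa_0}{\alpha}$.
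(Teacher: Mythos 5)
Your proposal is correct and follows essentially the same route as the paper's proof: the decomposition through the auxiliary process $\mathbb{X}^{\delta}$ with $\delta \to 0$, the It\^o/Kolmogorov argument reducing $\text{Error}_2$ to a time integral of $D\nu^{\delta}$ paired with the same four-way splitting of the drift discrepancy, the use of Proposition \ref{prop:markov_exponential_estimate_pol[TV4AC-25]} together with \eqref{eq:|F(u)-F(v)|(negativeSobolev)[TV4AC-25]} and Lemma \ref{lem:X^tau-holder-conti[TV4AC-25]} to get the full rate $\tau^{\gamma}$ with only polynomial $\epsilon^{-1}$ dependence, and the final $\min\{r^{-1}, 1+t_m\}$ bookkeeping. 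The only cosmetic difference is that the paper first telescopes over the grid intervals before applying It\^o's formula on each, which amounts to the same computation you describe on $[0,t_m]$.
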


\begin{proof}
From \eqref{eq:TV-weakformula[TV4AC-25]}, the convergence in TV distance is equivalent to establishing the following weak error bound: for all $\varphi \in \mathcal{C}^0_b(H)$ and any $m \in \mathbb{N}$,
\begin{equation}
d_{\mathrm{TV}}
    (
     \mathrm{law}( X^{\tau}_{t_m} )
     ,
     \mathrm{law} (X(t_m) )
    )
=
\sup_{
{\varphi \in \mathcal{C}_b^0(H),  \| \varphi \|_0 \leq 1}
}
\big| \mathbb{E} \big[ \varphi \big( X^{\tau}_{t_m} \big) \big] - \mathbb{E} \big[ \varphi(X(t_m)) \big] \big|.
\end{equation}
To begin with, we
introduce an auxiliary process $\mathbb{X}^{\delta}$ with parameter $\delta > 0$ defined by \eqref{eq:spde_auxiliary[TV4AC-25]} 
and make a decomposition:
\begin{equation}
\begin{split}
    &\big| \mathbb{E} \big[ \varphi \big( X^{\tau}_{t_m} \big) \big] - \mathbb{E} \big[ \varphi(X(t_m)) \big] \big|  \\
    &\leq
    \big| \mathbb{E} \big[ \varphi \big( X^{\tau}_{t_m} \big) \big] - \mathbb{E} \big[ \varphi(\mathbb{X}^{\delta}(t_m)) \big] \big| 
    +
    \big| \mathbb{E} \big[  \varphi(\mathbb{X}^{\delta}(t_m))  \big] - \mathbb{E} \big[ \varphi(X(t_m)) \big] \big|  \\
    &=:
    I_{1}^{(\delta)} + I_{2}^{(\delta)}.
\end{split}
\end{equation}
Since $\mathbb{X}^{\delta}$ serves as an approximation of $X$, we have $I_{2}^{(\delta)} \to 0$ as $\delta \to 0$ owing to Proposition \ref{prop:error_of_X^delta-X[TV4AC-25]}. Therefore, it remains to estimate $I_{1}^{(\delta)}$ 
uniformly in $\delta$.
By the telescoping argument, we rewrite
the term $I_{1}^{(\delta)}$ as follows:
\begin{equation}
\begin{split}
        I_{1}^{(\delta)} 
        &=
        \big| \mathbb{E} \big[ \nu^\delta \big(0, X^{\tau}_{t_m} \big) \big] - \mathbb{E} \big[ \nu^\delta \big(t_m, X_0 \big) \big] \big| \\
        &=
        \bigg| \sum_{k=0}^{m-1} 
        \underbrace{
        \mathbb{E} \big[ \nu^\delta \big(t_m -t_{k+1}, X^{\tau}(t_{k+1}) \big) \big] - \mathbb{E} \big[ \nu^\delta  \big(t_m-t_k, X^{\tau}(t_{k}) \big) \big]
        }_{=: I_{1}^{(\delta,k)}}
        \bigg|.
\end{split}
\end{equation}
Recalling $\nu^\delta(t, x) = \mathbb E[\varphi (\mathbb{X}^{\delta} (t,x))]$ and the associated Kolmogorov equation and using 
the It\^o formula yield, for $I_{1}^{(\delta,k)}, k = 0, 1, ..., m-1$, 
\begin{equation}
\begin{split}
I_{1}^{(\delta,k)}
&=
 \mathbb E \bigg[ 
 \int_{t_k}^{t_{k+1}}
-\Big(
D \nu^\delta \big( t_m -t, X^{\tau}(t) \big). \big(-A X^{\tau}(t) + \epsilon^{-1} F_\delta \big( X^{\tau}(t) \big) \big) \\
&
\quad
+  \frac{1}{2} \sum_{j \in \mathbb N} D^2 \nu^\delta \big( t_m -t, X^{\tau}(t) \big).(e_j, e_j)
\Big) 
\,\dd t \\
&\quad
+
\int_{t_k}^{t_{k+1}}
D \nu^\delta \big( t_m-t, X^{\tau}(t) \big).\bigg( -A X^{\tau}(t) +  E(t - t_k) \epsilon^{-1} F_\tau( X^{\tau}(t_k) )  \bigg)
\,\dd t \\
&\quad
+
\int_{t_k}^{t_{k+1}}
\frac{1}{2} \sum_{j \in \mathbb N} 
D^2 \nu^\delta \big( t_m-t, X^{\tau}(t) \big).(e_j, e_j) 
\,\dd t 
\bigg]\\
&= \mathbb E 
\bigg[ 
\int_{t_k}^{t_{k+1}} D \nu^\delta \big( t_m-t, X^{\tau}(t) \big). 
\bigg( 
E(t - t_k) \epsilon^{-1} F_\tau( X^{\tau}(t_k) )  - \epsilon^{-1} F_\delta \big( X^{\tau}(t) \big) 
\bigg)
\,\dd t
\bigg].
\end{split}
\end{equation}
Next we decompose 
the term
$I_{1}^{(\delta,k)}$
into four parts
as 
\[
I_{1}^{(\delta,k)}= I_{1,1}^{(\delta,k)}+ I_{1,2}^{(\delta,k)} + I_{1,3}^{(\delta,k)} + I_{1,4}^{(\delta,k)},
\]
where we denote
\begin{align}
   I_{1,1}^{(\delta,k)}
  & :=
   \mathbb E \bigg[ 
\int_{t_k}^{t_{k+1}} D \nu^\delta \big( t_m-t, X^{\tau}(t) \big). 
\bigg(  E(t - t_k) \epsilon^{-1} F_\tau( X^{\tau}(t_k) )  - \epsilon^{-1} F_\tau( X^{\tau}(t_k) ) \bigg) 
\,\dd t
\bigg],
\\
   I_{1,2}^{(\delta,k)}
 &  :=
   \mathbb E \bigg[ 
\int_{t_k}^{t_{k+1}} D \nu^\delta \big( t_m-t, X^{\tau}(t) \big). 
\bigg( \epsilon^{-1} F_\tau( X^{\tau}(t_k) )
        -
        \epsilon^{-1} F( X^{\tau}(t_k) ) 
\bigg) 
\,\dd t
\bigg],
\\
   I_{1,3}^{(\delta,k)}
 &  :=
   \mathbb E \bigg[ 
\int_{t_k}^{t_{k+1}} D \nu^\delta \big( t_m-t, X^{\tau}(t) \big). 
\bigg(  
       \epsilon^{-1} F( X^{\tau}(t_k) ) 
        -
       \epsilon^{-1} F( X^{\tau}(t) )
\bigg) 
\,\dd t
\bigg],
\\
   I_{1,4}^{(\delta,k)}
 &  :=
   \mathbb E \bigg[ 
\int_{t_k}^{t_{k+1}} D \nu^\delta \big( t_m-t, X^{\tau}(t) \big). 
\bigg(  
       \epsilon^{-1} F( X^{\tau}(t) )
        -
       \epsilon^{-1} F_\delta( X^{\tau}(t) )
\bigg) 
\,\dd t
\bigg].
\end{align}
We first estimate the term $ I_{1,1}^{(\delta,k)}$.
Using Proposition \ref{prop:markov_exponential_estimate_pol[TV4AC-25]} 
together with
 Proposition
\ref{prop:X^tau(t)-bound[TV4AC-25]} leads to
%
\begin{equation}\label{eq:I^k_11[TV4AC-25]}
\begin{aligned}
 & 
 \limsup_{\delta \to 0}
      \big| I_{1,1}^{(\delta,k)} \big|
\leq
        C( q, \gamma )
        \epsilon^{-1}
        \|\varphi\|_0
        \int_{ t_k }^{ t_{k+1} }
        \left(
              1 
              +
              (
                (t_m - t)
                \wedge
                \epsilon
              )^{-\frac{1}{2}-\gamma}
        \right)
        e^{-r(t_m - t)}
\\ &\qquad
        \cdot
        \E
        \Bigg[
        \Big(
            1 
            + 
          \big\|
          X^{\tau}(t)
          \big\|^{2q-2}_{L^{4q-4}}
        \Big)
             \left\| 
               A^{-\gamma} 
               ( E( t - t_k ) - I ) 
               F_{\tau}
               \big( X_{t_k}^{\tau} \big)
            \right\|
        \Bigg]
        \,\mathrm{d}t
\\
&\leq
         C(q, \gamma )
        \tau^{ \gamma }
        \epsilon^{-1}
        \| \varphi\|_0
        \int_{ t_k }^{ t_{k+1} }
         \left(
              1 
              +
              (
                (t_m - t)
                \wedge
                \epsilon
              )^{-\frac{1}{2}-\gamma}
        \right)
        e^{-r (t_m - t)}
\\ &\qquad\qquad
\cdot
    \Big(
    1 
    +
    \|  X^\tau(t) \|^{2q-2}_{L^{4q-4}(\Omega; L^{4q-4} )}
    \Big)
            \big\|
            F_{\tau}
            \big( X_{t_k}^{\tau} \big) 
            \big\|_{L^2(\Omega;H)}
        \,\mathrm{d}t
\\
      & \leq
       C( X_0 , q, \alpha,  \gamma)
        \tau^{ \gamma }
        \epsilon^{-1}
        \big(
        1
        +
        (\beta\tau^\theta)
        ^{\alpha ( 8q^2 - 6q -1 ) }
        \big)
        \| \varphi\|_0
        \int_{ t_k }^{ t_{k+1} }
        \left(
              1 
              +
              (
                (t_m - t)
                \wedge
                \epsilon
              )^{-\frac{1}{2}-\gamma}
        \right)
        e^{-r (t_m - t)}
        \,\mathrm{d}t,
\end{aligned} 
\end{equation}
where the property \eqref{eq:E(t)_semigroup_property[TV4AC-25]}, Proposition \ref{prop:F_tau[TV4AC-25]} and Assumption \ref{assump:F(Nonlinearity)[TV4AC-25]} were also utilized.
%
%
%
In a similar manner, we derive for the term $ I_{1,2}^{(\delta,k)}$ that
\begin{equation}\label{eq:I^k_12[TV4AC-25]}
\begin{aligned}
&
\limsup_{\delta\to 0}
      \big| I_{1,2}^{(\delta,k)} \big|
       \leq
        C( q )
        \epsilon^{-1}
        \|\varphi\|_0
        \int_{ t_k }^{ t_{k+1} }
        \left(
              1 
              +
              (
               (t_m - t)
                \wedge
                \epsilon
              )^{-\frac{1}{2}}
        \right)
        e^{-r(t_m - t)}   
\\ &\qquad 
        \cdot
        \E
        \Bigg[
     \Big(
            1 
            + 
          \big\|
          X^{\tau}(t)
          \big\|^{2q-2}_{L^{4q-4}}
        \Big)
             \left\| 
               F_{\tau}
               \big( X_{t_k}^{\tau} \big)
               -
                F
               \big( X_{t_k}^{\tau} 
               \big)
            \right\|
        \Bigg]
        \,\mathrm{d}t
\\
      & \leq
        C( X_0 , q )
         \epsilon^{  - 1 }
        \| \varphi\|_0
        \int_{ t_k }^{ t_{k+1} }
         \left(
              1 
              +
              (
               (t_m - t)
                \wedge
                \epsilon
              )^{-\frac{1}{2}}
        \right)
        e^{-r (t_m - t)}
\\&\qquad
\cdot
    \Big(
    1 
    +
    \|  X^\tau(t) \|^{2q-2}_{L^{4q-4}(\Omega; L^{4q-4} )}
    \Big)
             \left\| 
               F_{\tau}
               \big( X_{t_k}^{\tau} \big)
               -
                F
               \big( X_{t_k}^{\tau} 
               \big)
            \right\|_{L^2( \Omega; H )}
        \,\mathrm{d}t
\\ 
      & \leq
        C( X_0 , q, \alpha )
        \beta
        \tau^{ \theta}
         \epsilon^{ -1 }
      \big(
        1
        +
        (\beta\tau^\theta)
        ^{(4q-4) + \alpha ( 8q^2 - 6q -1 ) }
        \big)
        \| \varphi\|_0
        \int_{ t_k }^{ t_{k+1} }
        \left(
              1 
              +
              (
               (t_m - t)
                \wedge
                \epsilon
              )^{-\frac{1}{2}}
        \right)
        e^{-r (t_m - t)}
        \,\mathrm{d}t.
\end{aligned} 
\end{equation}
%
%
%
Similarly, for the term $ I_{1,3}^{(\delta,k)}$, one further utilizes the property \eqref{eq:|F(u)-F(v)|(negativeSobolev)[TV4AC-25]} and the Taylor formula, together with Lemma \ref{lem:X^tau-holder-conti[TV4AC-25]}, to deduce
\begin{equation}\label{eq:I^k_13[TV4AC-25]}
\begin{aligned}
&
\limsup_{\delta \to 0}
      \big| I_{1,3}^{(\delta,k)} \big|
       \leq
        C( q )
        \epsilon^{-1}
        \|\varphi\|_0
        \int_{ t_k }^{ t_{k+1} }
        \left(
              1 
              +
              (
                (t_m - t)
                \wedge
                \epsilon
              )^{-\frac{1}{2}-\frac{3}{8}}
        \right)
        e^{-r(t_m - t)}
\\ &\qquad 
        \cdot
  \Big(
            1 
            + 
          \big\|
          X^{\tau}(t)
          \big\|^{2q-2}_{L^{4q-4}(\Omega; L^{4q-4})}
        \Big)
\bigg(
    \E\left[
            \left\| 
               F( X_{t_k}^{\tau} )
               -
                F( X^{\tau}(t) )
            \right\|^2_{-\frac{3}{4} }
        \right]
\bigg)^{1/2}
        \,\mathrm{d}t
\\
      & \leq
        C( X_0 ,q,  \gamma)
        \epsilon^{  - 1 }
        \big(
        1
        +
        (
        \beta
        \tau^\theta
        )
        ^
        {
        \alpha(8q^2 - 10q + \frac32)
        }
        \big)
        \| \varphi\|_0
        \int_{ t_k }^{ t_{k+1} }
        \left(
              1 
              +
              (
                (t_m - t)
                \wedge
                \epsilon
              )^{-\frac{7}{8}}
        \right)
        e^{-r (t_m - t)}
        \,\mathrm{d}t
\\
& \quad
\cdot
\bigg(
        \E\left[ 
        \left( 1 + 
            \big\| 
            X^{\tau}_{t_k} 
            \big\|_V^{4q-4}
            +
            \big\| 
            X^{\tau}_{t_k} 
            \big\|^{4q-4}_{\gamma} 
            +
            \big\|
            X^{\tau}(t) 
            \big\|_V^{4q-4}
            +
            \big\|
            X^{\tau}(t) 
            \big\|^{4q-4}_{\gamma} 
        \right)
        \big\|
          X^{\tau}_{t_k}
          - 
          X^{\tau}(t) 
        \big\|_{-\gamma}^2
        \right]
\bigg)^{1/2}
        \,\mathrm{d}t
\\ 
      & \leq
        C( X_0 ,q, \alpha)
        \epsilon^{  -1  - \frac{q-1}{2} }
        \big(
        1
        +
        (
        \beta
        \tau^\theta
        )
        ^
        {
        \alpha ( 16 q^2 - 21 q + \frac{9}{2} )
        }
        \big)
        \| \varphi\|_0
\\
&\qquad
\cdot
        \int_{ t_k }^{ t_{k+1} }
        \left(
              1 
              +
              (
                (t_m - t)
                \wedge
                \epsilon
              )^{-\frac{7}{8}}
        \right)
        e^{-r (t_m - t)}
                \big\|
          X^{\tau}_{t_k}
          - 
          X^{\tau}(t) 
        \big\|_{ L^4(\Omega; \dot{H}^{-\gamma})} 
                \,\mathrm{d}t
\\ 
      & \leq
         C( X_0 ,q, \alpha)
       \tau^{\gamma}
        \epsilon^{  - \frac{q+1+2\gamma}{2} }
        \big(
        1
        +
        (
        \beta
        \tau^\theta
        )
        ^
        {
        \alpha ( 16 q^2 - 17 q + 3 )
        }
        \big)
        \| \varphi\|_0
        \int_{ t_k }^{ t_{k+1} }
        \!
        \left(
              1 
              +
              (
                (t_m - t)
                \wedge
                \epsilon
              )^{-\frac{7}{8}}
        \right)
        e^{-r (t_m - t)}
                \,\mathrm{d}t    .   
\end{aligned} 
\end{equation}
For the term $ I_{1,4}^{(\delta,k)}$, following same steps as  \eqref{eq:I^k_12[TV4AC-25]} and noting $F_\delta$ satisfies a similar property as \eqref{eq:|F_tau(u)-F(u)|[TV4AC-25]}, one gets $I_{1,4}^{(\delta,k)} \rightarrow 0$ as $\delta \rightarrow 0$. 
As a result, one gathers the above estimates to get
\begin{equation}
\limsup_{\delta \to 0}
|I_1^{(\delta,k)}|
\leq
        C( X_0 ,q, \gamma, \alpha)
        \epsilon^{  -\frac{q+1+2\gamma}{2} }
        \beta
       \big(
        1
        +
        (
        \beta
        \tau^\theta
        )
        ^
        {
        \kappa_0
        }
        \big)
        \| \varphi\|_0
       \tau^{ \gamma \wedge \theta }
     \hspace{-0.4em}
        \int_{ t_k }^{ t_{k+1} }
      \hspace{-0.5em}
        \left(
           1
           +
           (
           (t_m-t)
          \hspace{-0.2em}
           \wedge  
           \hspace{-0.2em}
           \epsilon
           )^{
           -\kappa_1
           }  
        \right)
        e^{-r (t_m - t)}
                \,\mathrm{d}t ,
\end{equation}
where 
$\kappa_0 
=
\max\{
(4q-4)
+
\alpha ( 8 q^2 - 6 q -1 )
,
\alpha ( 16 q^2 - 17 q + 3 )
\}
$
and
$
\kappa_1
=
\max
\{
\frac78
,
\frac12+\gamma
\}
$.
Summing over $k$
and noting the fact that $r^{-1} = O( \exp( \epsilon^{-2} ) )$ (see Proposition \ref{prop:V-uniform_ergodicity_X(t)[TV4AC-25]}) lead to the desired result.
\end{proof}

In the typical case $q=2$,
Theorem \ref{Thm_main_sharpinterface[TV4AC-25]} shows that the proposed explicit scheme achieves a polynomial dependence of order at most three on $\epsilon^{-1}$, which coincides with the cubic scaling with $\epsilon^{-1}$ identified in \cite{Cui_Sun2024weaksharp} for the weak convergence analysis with smooth test functions.
Moreover, the error bound has a linear dependence with respect to the terminal time $t_{m}$, which can be regarded as a trade-off between the interface singularity and temporal accumulation.

As a direct consequence of Theorem \ref{Thm_main_sharpinterface[TV4AC-25]}, we get a uniform-in-time TV convergence rate for fixed $\epsilon=1$, extending the existing ones in the globally Lipschitz regime \cite{brehier2024FCM,Brehier2025PA} to a non-globally Lipschitz setting.

\begin{cor}[Uniform-in-time convergence rate for fixed $\epsilon=1$]
\label{cor_main_uniform[TV4AC-25]}

Let all assumptions in Theorem \ref{Thm_main_sharpinterface[TV4AC-25]} hold and let
$\epsilon=1$ be fixed.
let $X(t), t\geq 0$ be the mild solution of \eqref{eq:spde[TV4AC-25]} and $X^{\tau}_{t_m}, m \in \N$ be the time-stepping scheme defined by \eqref{eq:time_discretization[TV4AC-25]} with $\theta \geq \tfrac12$.
Then for any $\gamma < \frac{1}{2} $, there exists a positive constant $ C( X_0 ,q, \gamma, \alpha)$ independent of $\tau, \beta$ and $t_m$,
such that for all $m \in \N$,
\begin{equation}
    d_{\mathrm{TV}}
    (
     \mathrm{law}( X^{\tau}_{t_m} )
     ,
     \mathrm{law} (X(t_m) )
    )
    \leq
     C( X_0 ,q, \gamma, \alpha)
     \beta
     \big(
     1
     +
     (
     \beta
     \tau^\theta
     )^{\kappa_0}
     \big)
     \tau^{ \gamma },
\end{equation}
where 
$\kappa_0 
=
\max\{
(4q-4)
+
\alpha ( 8 q^2 - 6 q -1 )
,
\alpha ( 16 q^2 - 17 q + 3  )
\}
$.
\end{cor}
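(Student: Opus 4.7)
The plan is to deduce the corollary as a direct specialization of Theorem \ref{Thm_main_sharpinterface[TV4AC-25]} to the case $\epsilon=1$, exploiting the fact that the ergodicity rate $r$ from Proposition \ref{prop:V-uniform_ergodicity_X(t)[TV4AC-25]} becomes a strictly positive, $\epsilon$-independent constant once $\epsilon$ is fixed. First, I would substitute $\epsilon=1$ into the error bound of Theorem \ref{Thm_main_sharpinterface[TV4AC-25]}. This instantly collapses the polynomial factor $\epsilon^{-(q+3+2\gamma)/2}$ to $1$ and the prefactor $\min\{\exp(\epsilon^{-2}),\,1+t_m\}$ to $\min\{e,\,1+t_m\}\le e$. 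Second, since the assumption $\theta\ge\tfrac12$ combined with $\gamma<\tfrac12$ yields $\gamma\wedge\theta=\gamma$, the rate exponent simplifies to $\tau^{\gamma}$, as desired.

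A more informative route, and the one I would actually write down in the proof, is to revisit the final line of the proof of Theorem \ref{Thm_main_sharpinterface[TV4AC-25]}. There the $t_m$-dependence arises only through the summation
\begin{equation*}
\sum_{k=0}^{m-1}\int_{t_k}^{t_{k+1}}\!\!\bigl(1+((t_m-t)\wedge\epsilon)^{-\kappa_1}\bigr)e^{-r(t_m-t)}\,\mathrm{d}t
\ \le\ \int_{0}^{t_m}\bigl(1+s^{-\kappa_1}\bigr)e^{-rs}\,\mathrm{d}s,
\end{equation*}
with $\kappa_1=\max\{\tfrac78,\tfrac12+\gamma\}$. For $\epsilon=1$ the rate $r$ degenerates into a positive constant independent of $t_m$, so the right-hand side is majorized by $\int_0^{\infty}(1+s^{-\kappa_1})e^{-rs}\,\mathrm{d}s$. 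Integrability near $s=0$ is guaranteed by $\kappa_1<1$ (which holds since $\gamma<\tfrac12$), and integrability at infinity follows from $r>0$. Hence the sum is uniformly bounded in $m$ by a constant depending only on $q,\gamma,\alpha$.

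Collecting the $\epsilon$-independent prefactors from the estimates $I_{1,1}^{(\delta,k)}$--$I_{1,4}^{(\delta,k)}$ in the proof of Theorem \ref{Thm_main_sharpinterface[TV4AC-25]} into a single constant $C(X_0,q,\gamma,\alpha)$, and retaining the $\beta(1+(\beta\tau^\theta)^{\kappa_0})$ structure intact (which is untouched by setting $\epsilon=1$), yields the asserted bound $C(X_0,q,\gamma,\alpha)\beta(1+(\beta\tau^\theta)^{\kappa_0})\tau^{\gamma}$ uniformly in $m\in\N$.

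Since the heavy machinery, namely the nonlinearity modification, the auxiliary process $\mathbb{X}^{\delta}$, and the refined Kolmogorov regularity estimates, has been fully developed for Theorem \ref{Thm_main_sharpinterface[TV4AC-25]}, no genuine obstacle remains for this corollary. The only point that requires attention is the convergence of the time integral at the singular endpoint $s=0$, which is purely a matter of checking $\kappa_1<1$. Everything else amounts to constant bookkeeping under the specialization $\epsilon=1$.
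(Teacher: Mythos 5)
Your proposal is correct and matches the paper's treatment: the paper gives no separate argument for the corollary, presenting it as a direct consequence of Theorem \ref{Thm_main_sharpinterface[TV4AC-25]} obtained by setting $\epsilon=1$ (so that $\epsilon^{-(q+3+2\gamma)/2}=1$, $\min\{\exp(\epsilon^{-2}),1+t_m\}\le e$ uniformly in $m$, and $\gamma\wedge\theta=\gamma$ since $\theta\ge\tfrac12>\gamma$). Your additional unwinding of the final summation, checking $\kappa_1<1$ and $r>0$, is exactly the mechanism already encoded in the theorem's $\min\{\exp(\epsilon^{-2}),1+t_m\}$ factor, so nothing new is needed.
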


\section{Numerical experiments}
\label{sec:Numerical_experiments[TV4AC-25]}
In this section, we perform some numerical experiments to verify the theoretical results in the previous sections. Consider the following one-dimensional stochastic Allen-Cahn equation driven by the space-time white noise:
\begin{equation}\label{numeri}
   \left\{\begin{array}{l}
\frac{\partial u}{\partial t} (t,x)
=
\frac{\partial^2 u}{\partial x^2} 
(t,x) 
+ 
\epsilon^{-1} u(t,x)
- 
\epsilon^{-1} u^3(t,x) 
+
\dot{W}(t)
, 
\quad
t \in(0,1]
,
\quad 
x \in(0,1)
, \\
u(0, x) 
= 
\sin (\pi x)
,
\quad 
x \in(0,1)
,
\\
u(t, 0) 
=
u(t, 1)
= 
0
, 
\quad
t \in(0,1]
,
\end{array}\right.
\end{equation}
where we set $\epsilon=0.01$ and $\{ W(t) \}_{t \geq 0}$ is the cylindrical Wiener process as defined in Assumption \ref{assump:W(noise)[TV4AC-25]}.
To perform numerical experiments on a computer, we do the spatial discretization using the spectral Galerkin method ($N = 2^6$) and the temporal discretization using the proposed time-stepping scheme \eqref{eq:time_discretization[TV4AC-25]}.
%
%
%

%
%
%

In what follows, 
we simulate the weak convergence at the endpoint $T=1$, where we choose a typical scheme with parameters $ \theta = \frac12, \alpha = 1, \beta = 5$.
The test function $\varphi$ is chosen 
as a step function 
as follows:
\begin{equation}
\label{eq:test_func[TV4AC-25]}
    \varphi(X) = \begin{cases} 
\sin(a), & \text{if } \|X\| \in [a, a+0.1), \\
\sin(a+0.1), & \text{if } \|X\| \in [a+0.1, a+0.2), \\
\vdots & \vdots \\
\sin(a+0.9), & \text{if } \|X\| \in [a+0.9, a+1),
\end{cases}
\end{equation}
where $a \in \N_0$ are any integers.
Meanwhile, we approximate the expectations appearing in weak errors by computing the average of over $1000$ samples
and identify the ``exact" solution with the numerical one produced by the implicit Euler method with
sufficiently small step-size
($M_{\text{exact}} = 2^{14}$).
Using five different step-sizes with $M = T \tau^{-1} = 2^i, i = 8, ..., 12$,
we plot weak errors in Fig.\ref{fig:error}\subref{subfig:error_beta5} 
on a log-log scale.
A comparison with the other three reference lines indicates that the convergence rate is close to $\frac12$, 
matching well the theoretical result.
%
We also list errors in Table \ref{tab:errors[TV4AC-25]} for schemes with different choices of $\alpha$.
An interesting observation is that, decreasing the degree $\alpha$ seemingly improves the computational accuracy for the same step-size.
%
%
%
Additionally, we test the parameter choice with
$\beta^{\alpha}= \epsilon^{-1}$, where we take $\alpha=1 $ and thus $ \beta=100$. We compute the errors with various step-sizes ($M = T \tau^{-1} = 2^i, i = 5, \dots, 9$).
As shown in Fig.\ref{fig:error}\subref{subfig:error_beta100},
the convergence rate also agrees with the theoretical one.


\begin{figure}[htp]   
  \centering 
\subfloat[$\beta=5$]    
  {\label{subfig:error_beta5}
  \includegraphics[width=0.45\textwidth]{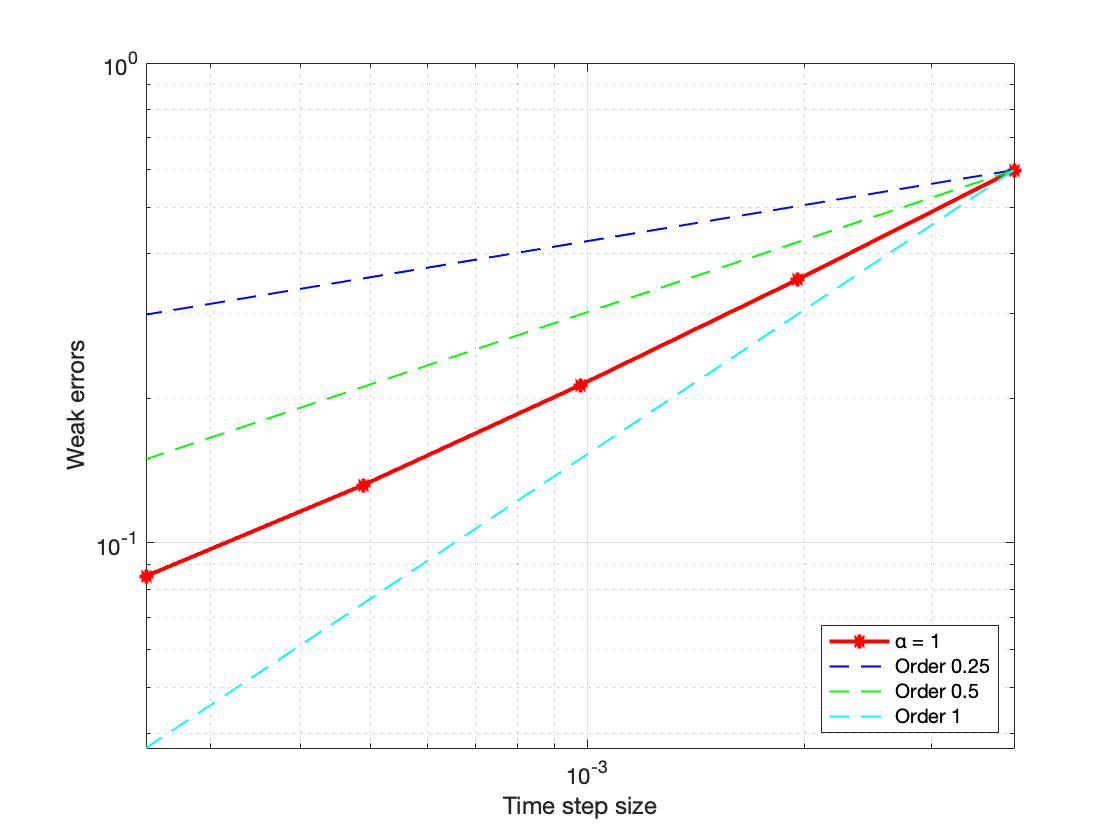}
  }
       \hspace{0.5em}
  \subfloat[$\beta=100$]
  {\label{subfig:error_beta100}
      \includegraphics[width=0.45\textwidth]{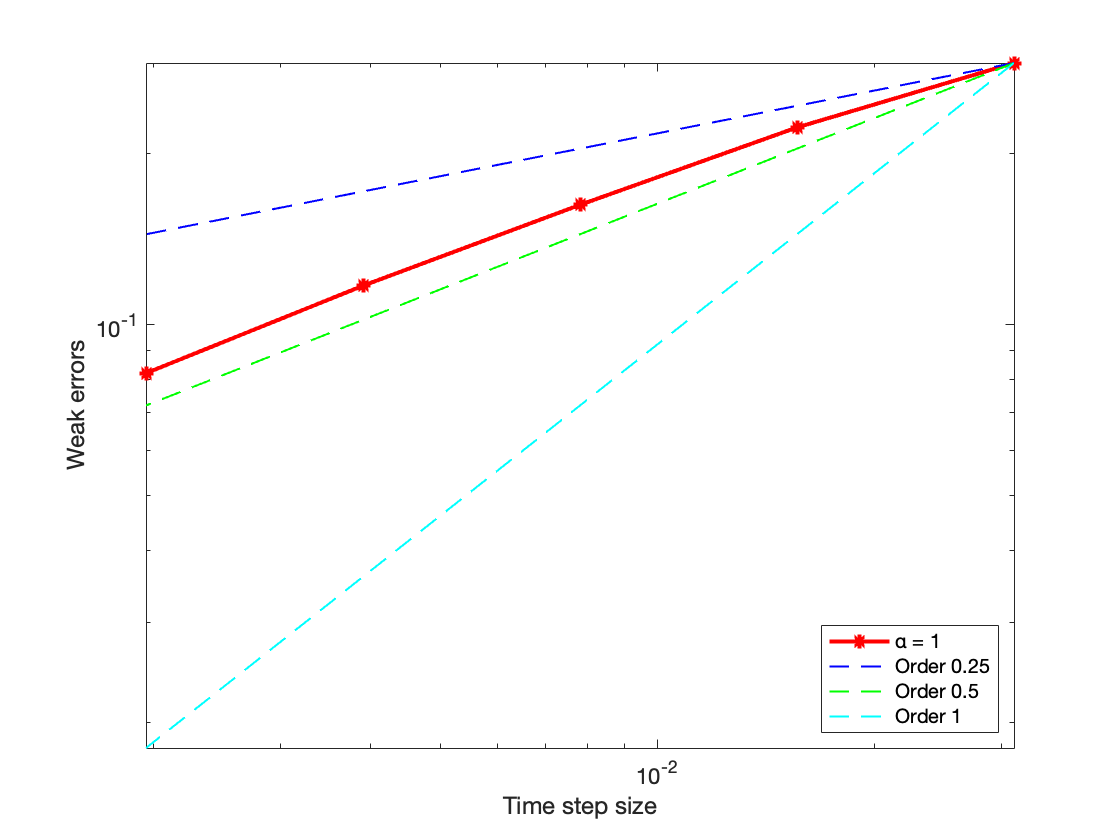}
  }
  \caption{Weak convergence rates of explicit time-stepping schemes}
\label{fig:error} 
\end{figure}

\begin{table}[htp]
	\centering
 \footnotesize
	\setlength{\tabcolsep}{7mm}
        \caption{Errors of numerical schemes with different $\alpha$}\label{tab:errors[TV4AC-25]}
	\begin{tabular}{
    m{2cm}<{\centering} m{1cm}<{\centering} m{1cm}<{\centering} m{1cm}<{\centering} <{\centering} m{1cm}<{\centering}
    }
		\toprule[2pt]
		  & 
          $\alpha = 1$
          & 
          $\alpha = \frac12$
          & 
          $\alpha = \frac13$
          &
          $\alpha = \frac14$
          \\
		\midrule 
		 $\tau = 2^{-8}$
         & 
         0.5982
         &  
         0.4184
         &  
         0.3791
         &
         0.3626
         \\
		 $\tau = 2^{-9}$
         & 
         0.3533
         &  
         0.2347
         &  
         0.2030
         &
         0.1896
         \\
		 $\tau = 2^{-10}$
         & 
         0.2131
         &  
         0.1283
         &  
         0.1080
         &
         0.0993
         \\
		 $\tau = 2^{-11}$
         & 
         0.1319
         &  
         0.0765
         &  
         0.0622
         &
         0.0567
         \\
		 $\tau = 2^{-12}$
         & 
         0.0853
         &  
         0.0477
         &  
         0.0364
         &
         0.0340
         \\
		\bottomrule [2pt]
	\end{tabular}
	\vspace{2pt}
 \label{table1}
\end{table}

\begin{figure}[htp]   
  \centering 
  \subfloat[$\epsilon=0.01$]    
  {
  \includegraphics[width=0.45\textwidth]{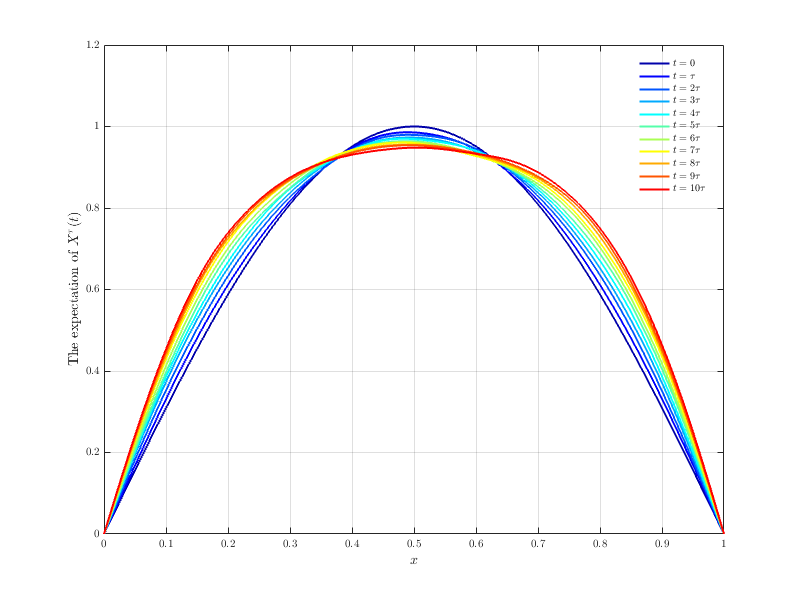}
  }
       \hspace{0.5em}
  \subfloat[$\epsilon=0.001$]
  {
      \includegraphics[width=0.45\textwidth]{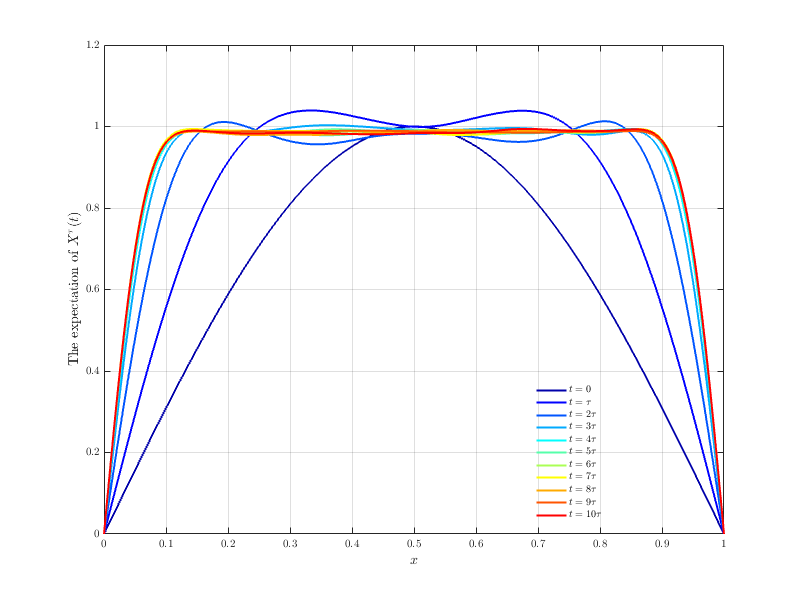}
  }
  \caption{Interface-capturing of the explicit time-stepping scheme}
\label{fig:solution} 
\end{figure}

Finally, we conduct numerical experiments to illustrate the interface-capturing performance of the proposed time-stepping scheme.
Setting $\alpha=1, \beta=5, \theta=\tfrac12$ and using a fixed time step-size $\tau=2^{-10}$,
we investigate profiles of interface for the model \eqref{numeri} with
two different values of the interface width,
i.e., $\epsilon=0.01$ and $\epsilon=0.001$.
Via approximating the expectation with the average of 1000 samples,
the expected values of numerical solutions at various time are depicted in Fig.\ref{fig:solution} ($\epsilon=0.01$ for the left picture and $\epsilon=0.001$ for the right one). We observe that the profiles of interface can be well captured.

\vskip6mm
\bibliography{ref}

\end{document}